\tikzset{filled/.style={minimum width=5pt,inner sep=0pt,circle,fill=black}}
\newtheorem{theorem}{Theorem}[section]
\newtheorem{lemma}[theorem]{Lemma}
\newtheorem{corollary}[theorem]{Corollary}
\newtheorem{conjecture}[theorem]{Conjecture}
\newtheorem{question}[theorem]{Question}
\theoremstyle{definition}
\newtheorem{definition}[theorem]{Definition}
\newtheorem{example}[theorem]{Example}
\theoremstyle{remark}
\newtheorem{remark}[theorem]{Remark}
\numberwithin{equation}{section}
\numberwithin{figure}{section}
\renewcommand{\mod}{\operatorname{mod}}
\newcommand{\M}{\operatorname{M}}
\newcommand{\FM}{\operatorname{FM}}
\newcommand{\diam}{\operatorname{diam}}
\newcommand{\N}{\mathbb{N}}
\newcommand{\Z}{\mathbb{Z}}
\title[$2$-Distance Graphs]{Limits and Periodicity of Metamour $2$-Distance Graphs} 
\author[Erickson, Herden, Meddaugh, Sepanski, $\ldots$]{William Q. Erickson, Daniel Herden, Jonathan Meddaugh, Mark R. Sepanski, Mitchell Minyard, Kyle Rosengartner}
\address{
All authors:
Department of Mathematics,
Baylor University,
Sid Richardson Building,
1410 S.~4th Street,
Waco, TX 76706, USA}
\email{will\_erickson@baylor.edu, daniel\_herden@baylor.edu,  jonathan\_meddaugh@baylor.edu, mark\_sepanski@baylor.edu, kyle\_rosengartner1@baylor.edu}
\date{\today}
\begin{document}

\keywords{$2$-distance graphs, metamour graphs, metamour-complementary graphs, metamour period}
\subjclass[2020]{Primary: 05C12, 05C76; Secondary: 05C38}

\begin{abstract}
    Given a finite simple graph $G$, let $\M(G)$ denote its 2-distance graph, in which two vertices are adjacent if and only if they have distance 2 in $G$.  
    In this paper, we consider the periodic behavior of the sequence $G, \M(G), \M^2(G), \M^3(G), \ldots$ obtained by iterating the 2-distance operation.
    In particular, we classify the connected graphs with period 3, and we partially characterize those with period 2.
    We then study two families of graphs whose 2-distance sequence is \emph{eventually} periodic: namely, generalized Petersen graphs and complete $m$-ary trees.
    For each family, we show that the eventual period is 2, and we determine the pre-period and the two limit graphs of the sequence.
\end{abstract}

\maketitle

\tableofcontents

\section{Introduction}

The notion of an \emph{$n$-distance graph} was introduced by Harary--Hoede--Kadlecek~\cite{HHK}, and defined as follows: given a finite simple graph $G$, its $n$-distance graph is obtained by placing an edge between two vertices if and only if those vertices have distance $n$ in the original graph $G$.
Since then, the study of $n$-distance graphs (in particular, the special case $n=2$) has developed in several directions, and has garnered increased interest within the last decade.
This recent interest includes work on the connectivity of $2$-distance graphs \cite{JM23, Khormali}, their regularity~\cite{GaarKrenn2023}, their diameter~\cite{JM24}, $2$-distance graphs which have certain maximum degree~\cite{AG1} or are isomorphic to the original graph~\cite{AG2}, and general characterizations of certain $2$-distance graphs~\cite{ChingGarces19}.
Another topic of interest is the periodicity of graphs with respect to the $2$-distance operation.
This question was first addressed in the note~\cite{Zelinka} in 2000 (see also~\cite{Leonor}), and is the subject of the present paper.

Throughout the paper, we adopt the term \emph{metamour graph}, which was recently introduced in~\cite{GaarKrenn2023} as a synonym for the 2-distance graph.
Starting with some finite simple graph $G$, we consider the sequence $G, \M(G), \M^2(G), \M^3(G), \ldots$, obtained by repeatedly taking metamour graphs.
A natural problem is to describe the periodic behavior of this metamour sequence, in as much generality as possible.
Our paper solves this problem in several special cases, which we highlight below:
\begin{itemize}
    \item In Section \ref{pseud1}, we discuss graphs $G$ with the property $M(G)=\overline{G}\cong G$. In particular, we provide two proofs that every graph is an induced subgraph of some graph $G$ with $M(G)=\overline{G}\cong G$, see Theorems \ref{thm: paley} and \ref{thm: selfcomp}.
    \item In Theorem~\ref{thm: odd C_n with decorations}, we determine the periodic behavior of the metamour sequence in the case where $G$ is formed by joining an arbitrary number of graphs together in a cycle.
    (This generalizes Proposition~2 in Zelinka \cite{Zelinka}.)
    \item Theorem~\ref{thm:induced subgraph} states that for any even positive integer $k$, every graph is an induced subgraph of a graph whose metamour sequence is $k$-periodic.
    (This theorem strengthens the main theorem in Zelinka~\cite[p.~268]{Zelinka}, which merely asserted that $k$-periodic graphs exist for each even $k$.)
    \item In Theorem~\ref{thm: period 2 metamour diameter constraints}, we show that if $G$ has metamour period 2, then $G$ and $\M(G)$ have the same diameter, which is at most $3$.
    \item In fact, when this diameter equals 3, we identify a certain subgraph $\widehat{C}_5$ (see Theorem~\ref{thm: C5 with party hat}) which must be contained in $G$.
    \item In Theorem~\ref{thm: classification period 3}, we prove that the only connected graphs with metamour period 3 are the two cycle graphs $C_7$ and $C_9$.
\end{itemize}

In the final two sections of the paper, we consider graphs whose metamour sequence is \emph{eventually} periodic.
In particular, we study two fundamental families in graph theory: the generalized Petersen graphs $G(m,2)$, and the complete $m$-ary trees.
In each case (see Theorems~\ref{thm: Peteresen G(m,2) limit sets} and~\ref{cor:tree limits}), we show that the eventual period is 2, we determine where in the sequence $G, \M(G), \M^2(G), \M^3(G), \ldots$ this periodicity begins, and we explicitly describe the two graphs that are the limits of this metamour sequence.

In addition to our main results summarized above, we also answer (in Theorem~\ref{thm: inverse image of M}) a question posed in~\cite[Question~10.5]{GaarKrenn2023}, regarding the classification of metamour graphs. 
Moreover, the work in this paper led us to Questions \ref{q1} and \ref{q2}, and to
Conjecture~\ref{conj: odd periodic}, regarding the parity of metamour periods: roughly speaking, graphs with even metamour periods are ``common'' (as demonstrated in Theorem~\ref{thm:induced subgraph}) and those with odd metamour periods are ``rare'' (conjecturally, finitely many).
Hence a natural direction for further research is the problem of proving Conjecture~\ref{conj: odd periodic}, and classifying those graphs whose metamour period is even (respectively, odd).

\section{Notation}

We write $\N$ for the nonnegative integers and $\Z^+$ for the positive integers. 
Throughout the paper, we use standard graph theoretical notation and terminology, which we summarize as follows.
We write $G=(V,E)$ to denote a simple graph with vertex set $V$ and edge set $E$.
We also use $V(G)$ and $E(G)$ to denote the vertex and edge sets of $G$.
We abbreviate the edge $\{x,y\}$ by writing $xy$.
Let 
$d_G:V\times V\rightarrow \N \cup \{\infty\}$
be the distance function on $G$. 
Recall that the \emph{diameter} of a connected graph $G$, denoted by $\diam(G)$, is the maximum value attained by $d_G$. A disconnected graph has infinite diameter.
We write $G_1 \cup G_2$ for the \emph{union} of two graphs, in which the vertex set is the disjoint union $V(G_1) \cup V(G_2)$ and the edge set is the disjoint union $E(G_1) \cup E(G_2)$.
We write $G_1 \nabla G_2$ for the \emph{join} of two graphs, which is $G_1 \cup G_2$ together with all edges connecting $V(G_1)$ and $V(G_2)$.
We write $\overline{G}$ for the \emph{complement} of $G$, where $V(\overline{G})=V(G)$, and $xy \in E(\overline{G})$ if and only if $xy \not\in E(G)$.
We write $K_n$ for the complete graph on $n$ vertices, and $C_n$ for the cycle graph on $n$ vertices.
The \emph{edgeless graph} on $n$ vertices is the graph with no edges, which we denote by $\overline{K}_n$.
We write $G_1 \subseteq G_2$ to express that $G_1$ is a subgraph of $G_2$.
Given a subset $S \subseteq V(G)$, the \emph{induced subgraph} is the graph with vertex set $S$, whose edges are all the edges in $E(G)$ with both endpoints in $S$.
We write $G_1 = G_2$ to denote equality as graphs of distinguishable vertices, and we write $G_1 \cong G_2$ to express that two graphs are isomorphic (i.e., equal up to forgetting vertex labels). A graph $G$ is called \emph{self-complementary} if $G\cong \overline{G}$.

The focus of this paper is the \emph{metamour graph} of $G$ (also known as the \emph{2-distance graph} in the literature):

\begin{definition}\label{def: M}
    The \emph{metamour graph} of $G$, written as
        \[ \M(G),\]
    has vertex set $V(G)$, and an edge between $x,y\in V(G)$ if and only if $d_G(x,y) = 2$. 

    More generally, we write $\M^0(G) \coloneqq G$ and inductively define
        \[ \M^{k+1}(G) \coloneqq \M(\M^{k}(G)), \qquad k \in \N. \]
\end{definition}

\begin{definition}
\label{def:intro}
    Let $k\in\Z^+$. 
    We say that $G$ has \emph{metamour period $k$} if
    \[ \M^k(G) = G\]
    and $k$ is minimal with this property.
    We say that $G$ has \emph{pseudo-metamour period $k$} if
    \[ \M^k(G) \cong G \]
    and $k$ is minimal with this property.
    We say that $G$ is \emph{metamour-complementary} if
    \[ \M(G) = \overline{G}.\]
\end{definition}

Whereas Definition~\ref{def:intro} above pertains to periodic sequences of metamour graphs, the following definition concerns those metamour sequences which are \emph{eventually} periodic:

\begin{definition}
    Let $k\in\Z^+$. We say that $G$ has \emph{metamour limit period~$k$} if there exists $N \in \N$ such that, for all integers $i \geq N$, we have
    \[ \M^{k+i}(G) = \M^i(G), \]
    and $k$ is minimal with this property. In this case, we write
    \[ \lim \M(G) \coloneqq \{ \M^i(G) \mid  i \geq N\} =
        \{ \M^N(G), \M^{N+1}(G), \ldots, \M^{N+k-1}(G) \} \]
    for the \emph{metamour limit set} of $G$.
\end{definition}

Note that for finite graphs, the metamour limit period always exists, and so the metamour limit set is also finite.

\section{Graphs That Are Metamour Graphs}

In this section, we collect some basic results on metamour graphs and metamour-complementary graphs. Our main goal is a characterization and discussion of those graphs $G$ such that $G=\M(G')$ for some graph $G'$, i.e., of graphs which are metamour graphs (see Theorem~\ref{thm: inverse image of M}).

We start by noting the following relation between $\M(G)$ and $\overline{G}$.

\begin{lemma} \label{lem: subset of G bar}
    For any graph $G$, we have
    \[ \M(G)\subseteq \overline{G}.\]
    If there is a graph $G'$ on $V(G)$ such that $\M(G')=G$, then $G'\subseteq \overline{G}$.
\end{lemma}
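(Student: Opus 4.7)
The plan is to verify both inclusions directly from the definitions, since both statements are essentially tautological once we unpack what it means for an edge to lie in $\M(H)$ versus $\overline{H}$.

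For the first inclusion $\M(G)\subseteq \overline{G}$, I would first note that by Definition~\ref{def: M} and the definition of complement, both graphs have the same vertex set $V(G)$, so only the edge containment needs checking. Suppose $xy\in E(\M(G))$; then $d_G(x,y)=2$ by definition of the metamour graph. In particular $d_G(x,y)\neq 1$, so $xy\notin E(G)$, which is exactly the condition for $xy\in E(\overline{G})$.

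For the second inclusion, assume $G'$ is a graph on $V(G)$ with $\M(G')=G$. Again vertex sets agree, so I only need to check edges. Suppose $xy\in E(G')$, so $d_{G'}(x,y)=1\neq 2$, which means $xy\notin E(\M(G'))=E(G)$, and hence $xy\in E(\overline{G})$. (Note that the second statement is a straightforward consequence of the first applied to $G'$: since $\M(G')\subseteq \overline{G'}$, we get $G\subseteq \overline{G'}$, which is equivalent to $G'\subseteq \overline{G}$ because complementation is an involution that reverses containment on graphs sharing a common vertex set.)

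There is no real obstacle here; the only thing to take care of is making the distinction between set-theoretic containment of edge sets and containment as labeled graphs clear, so that both claims are framed against the shared vertex set. I would likely present the argument in one short paragraph, observing that the second assertion is immediate from the first by passing to complements.
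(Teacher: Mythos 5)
Your proposal is correct and follows essentially the same route as the paper: both arguments simply unpack Definition~\ref{def: M} and observe that an edge of $\M(H)$ (distance exactly $2$) cannot be an edge of $H$ (distance $1$), applied once to $G$ and once to $G'$. The paper's proof is just a terser version of the same observation, so no further comparison is needed.
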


\begin{proof}
    This follows immediately from Definition \ref{def: M}. In particular, if $xy \in E(G)$, then $d_G(x,y) = 1$, thus $xy \notin E(\overline{G})$.
\end{proof}

Metamour-complementary graphs are defined as the graphs $G$ where equality is achieved in $\M(G)\subseteq \overline{G}$. We provide an alternative characterization of metamour-complementary graphs.

\begin{theorem}
    A graph $G$ is metamour-complementary if and only if $\diam(G) \leq 2$.
    \label{M(G) = G^c}
\end{theorem}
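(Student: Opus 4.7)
The plan is to start from Lemma \ref{lem: subset of G bar}, which already gives the inclusion $\M(G) \subseteq \overline{G}$ for free, and then characterize when this inclusion is an equality. Since the two graphs share the vertex set $V(G)$, equality reduces to the statement that every edge of $\overline{G}$ is an edge of $\M(G)$.

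First I would unpack the two edge conditions. For distinct $x,y \in V(G)$, the edge $xy$ lies in $\overline{G}$ precisely when $xy \notin E(G)$, which is equivalent to $d_G(x,y) \geq 2$ (allowing $d_G(x,y)=\infty$ for vertices in different components). On the other hand, $xy \in E(\M(G))$ precisely when $d_G(x,y)=2$. Therefore, $\M(G)=\overline{G}$ holds if and only if every pair of vertices with $d_G(x,y)\geq 2$ actually has $d_G(x,y)=2$, i.e., no pair of vertices achieves distance strictly greater than $2$. Recalling the paper's convention that $\diam(G)$ is the maximum value of $d_G$ (and is infinite for disconnected $G$), this is exactly the condition $\diam(G)\leq 2$.

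To make this rigorous, I would split into the two directions. For the forward direction, assume $\diam(G)\leq 2$; then $G$ is connected, and any $xy \in E(\overline{G})$ satisfies $2 \leq d_G(x,y) \leq \diam(G) \leq 2$, forcing $d_G(x,y)=2$ and so $xy \in E(\M(G))$; combined with Lemma \ref{lem: subset of G bar}, this gives $\M(G)=\overline{G}$. For the converse, assume $\diam(G) > 2$, and produce a witness edge in $E(\overline{G}) \setminus E(\M(G))$: either (i) $G$ is disconnected, in which case pick $x,y$ in distinct components so that $d_G(x,y) = \infty$; or (ii) $G$ is connected with $\diam(G) \geq 3$, in which case pick $x,y$ realizing a distance of at least $3$. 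In both cases $xy \notin E(G)$ (so $xy \in E(\overline{G})$) but $d_G(x,y) \neq 2$ (so $xy \notin E(\M(G))$), contradicting metamour-complementarity.

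There is no real obstacle here; the only minor point of care is being explicit about the $\diam = \infty$ convention for disconnected graphs, so that the disconnected case is handled without ambiguity in the converse direction.
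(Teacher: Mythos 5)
Your proposal is correct and follows essentially the same route as the paper: use Lemma \ref{lem: subset of G bar} to reduce to checking $E(\overline{G}) \subseteq E(\M(G))$, and observe that this holds exactly when every pair at distance $\geq 2$ is at distance exactly $2$. The paper phrases the converse directly (assuming $\M(G)=\overline{G}$) rather than contrapositively, but the content is identical, and your explicit handling of the disconnected case is consistent with the paper's $\diam=\infty$ convention.
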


\begin{proof}
    Suppose $\M(G) = \overline{G}$. If $x,y \in V$ with $d_G(x,y) \geq 2$, then $xy \in E(\overline{G}) = E(\M(G))$ and so $d_G(x,y) = 2$.

    Now suppose $\diam(G) \leq 2$. Then for $xy \in E(\overline{G})$, we have $d_G(x,y) = 2$ and so $xy \in E(\M(G))$. Since $\M(G) \subseteq \overline{G}$ by Lemma \ref{lem: subset of G bar}, we are done.
\end{proof}

Theorem \ref{M(G) = G^c} can be expanded to provide an answer to \cite[Question~10.5]{GaarKrenn2023} by characterizing those graphs $G$ which are metamour graphs.

\begin{theorem} \label{thm: inverse image of M}
    Let $G=(V,E)$ be a graph. The following are equivalent:
    \begin{enumerate}
        \item There exists a graph $G'$ on $V$ such that $\M(G')=G$.
        \item $\overline{G}$ is metamour-complementary, i.e., $\M(\overline{G})=G$.
        \item For every $xy \in E$, there exists $z \in V\setminus \{x,y\}$ with $xz,yz \notin E$.
        \item $\diam(\overline{G}) \leq 2$.
    \end{enumerate}
\end{theorem}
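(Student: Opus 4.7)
The plan is to close the cyclic chain of implications $(2)\Rightarrow(1)\Rightarrow(3)\Rightarrow(4)\Rightarrow(2)$. Two of these links are essentially free. For $(2)\Rightarrow(1)$, choose $G':=\overline{G}$. For $(4)\Rightarrow(2)$, apply Theorem~\ref{M(G) = G^c} to the graph $\overline{G}$: that theorem tells us $\overline{G}$ is metamour-complementary as soon as $\diam(\overline{G})\le 2$, and ``$\overline{G}$ is metamour-complementary'' unpacks as $\M(\overline{G})=\overline{\overline{G}}=G$. So the real content sits in the two implications $(1)\Rightarrow(3)$ and $(3)\Rightarrow(4)$, which sandwich condition~(3) between ``$G$ is realized as some $\M(G')$'' and ``$\overline{G}$ has small diameter.''

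For $(1)\Rightarrow(3)$, I would pick an arbitrary edge $xy\in E$ and unpack the definition of the metamour graph. Since $\M(G')=G$, we have $d_{G'}(x,y)=2$, so there is some $z\in V\setminus\{x,y\}$ with $xz,yz\in E(G')$. The second part of Lemma~\ref{lem: subset of G bar} now gives $G'\subseteq\overline{G}$, so $xz,yz\notin E$, which is exactly condition~(3).

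For $(3)\Rightarrow(4)$, I would directly bound $d_{\overline{G}}(x,y)\le 2$ for every pair of distinct vertices $x,y\in V$, splitting into two cases. If $xy\notin E$, then $xy\in E(\overline{G})$ and the distance is $1$. If $xy\in E$, condition~(3) supplies a common non-neighbor $z$ of $x$ and $y$ in $G$, which becomes a common neighbor in $\overline{G}$, yielding a path $xzy$ of length $2$. Taking a maximum over all pairs gives $\diam(\overline{G})\le 2$.

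No step here is technically hard, so there is no major obstacle. The one point that deserves a beat of care is the ``$xy\in E$'' case of $(3)\Rightarrow(4)$: this is precisely where condition~(3) is doing work, since without it one could not rule out $\overline{G}$ being disconnected (and hence, by the convention fixed earlier, having infinite diameter). In effect, condition~(3) is exactly the hypothesis that promotes ``$\overline{G}$ contains all non-edges of $G$'' to ``every edge of $G$ admits a length-two detour in $\overline{G}$,'' which is what the diameter bound demands.
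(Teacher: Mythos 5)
Your proof is correct and follows essentially the same route as the paper: both arguments hinge on Lemma~\ref{lem: subset of G bar} (forcing $G'\subseteq\overline{G}$) to handle the implication out of condition~(1), and both reduce the diameter condition~(4) to Theorem~\ref{M(G) = G^c} applied to $\overline{G}$. The only difference is organizational — you close a single cycle of implications while the paper proves three equivalences hubbed at condition~(2) — but the mathematical content is the same.
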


\begin{proof}
    We first show that (1) and (2) are equivalent.
    It is immediate that (2) implies (1).
    To show that (1) implies (2), suppose $\M(\overline{G}) \neq G$. 
    By Definition \ref{def: M}, there exists $xy \in E(G)$ such that there is no $z \in V \setminus \{x,y\}$ with $xz,yz \in E(\overline{G})$. 
    Therefore, for any $G'\subseteq \overline{G}$, we have $xy\not\in \M(G')$. 
    Since (by Lemma \ref{lem: subset of G bar}) any $G'$ satisfying $\M(G')=G$ satisfies $G'\subseteq \overline{G}$, we are done.

    The equivalence of (2) and (3) follow immediately from Definition~\ref{def: M}, and the equivalence of (2) and (4) from Theorem \ref{M(G) = G^c}.    
\end{proof}





We also note the following nice sufficient characterization.

\begin{corollary} \label{cor: meta of completment by deg < 1/2 V}
    Let $\Delta(G)$ denote the maximum degree of $G=(V,E)$.
    If 
    \[ 2\, \Delta(G) < |V|,\] 
    then there exists a graph $G'$ such that $\M(G')=G$.
\end{corollary}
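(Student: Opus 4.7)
The plan is to reduce this to condition~(3) of Theorem~\ref{thm: inverse image of M}, which asks: for every $xy \in E$, there exists $z \in V \setminus \{x,y\}$ with $xz, yz \notin E$. Once this is verified, the existence of $G'$ with $\M(G') = G$ follows immediately (indeed, we can take $G' = \overline{G}$).

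So fix an arbitrary edge $xy \in E(G)$ and consider the set $N(x) \cup N(y)$ of vertices adjacent in $G$ to $x$ or to $y$. First, I would bound its size by
\[ |N(x) \cup N(y)| \leq \deg(x) + \deg(y) \leq 2\Delta(G) < |V|. \]
Then the crucial observation is that because $xy \in E$, we have $y \in N(x)$ and $x \in N(y)$, so $\{x,y\} \subseteq N(x) \cup N(y)$. Hence the set $V \setminus (N(x) \cup N(y))$, which is nonempty by the displayed inequality, is automatically contained in $V \setminus \{x,y\}$.

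Therefore any vertex $z$ in $V \setminus (N(x) \cup N(y))$ satisfies $z \neq x,y$ and $xz, yz \notin E(G)$, which verifies condition~(3) of Theorem~\ref{thm: inverse image of M}. Invoking the equivalence with condition~(1) of that theorem yields the desired graph $G'$.

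I do not anticipate any real obstacle here: the whole argument is just a pigeonhole count on $|N(x) \cup N(y)|$, with the one bookkeeping subtlety being to use the edge $xy$ itself to ensure that $x$ and $y$ lie in $N(x) \cup N(y)$, so that no separate argument is needed to exclude them from the candidate vertices for $z$.
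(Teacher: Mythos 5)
Your proposal is correct and follows essentially the same route as the paper: a degree count showing $\deg(x)+\deg(y)<|V|$ yields a vertex $z$ nonadjacent to both $x$ and $y$, after which Theorem~\ref{thm: inverse image of M} finishes the argument. Your write-up simply makes explicit the bookkeeping (that the edge $xy$ forces $x,y\in N(x)\cup N(y)$, so $z\neq x,y$ automatically) that the paper leaves implicit.
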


\begin{proof}
    Suppose $xy \in E$. Since $\deg(x) + \deg(y)  < |V|$, there is some $z \in V$ with $xz,yz \notin E$. Theorem \ref{thm: inverse image of M} finishes the result.
\end{proof}

\begin{remark}
    If $G$ is $j$-regular, then Corollary \ref{cor: meta of completment by deg < 1/2 V} says that a sufficient condition for the existence of a $G'$ such that $\M(G')=G$ is $2j < |V|$. Since $|V| = \frac{2|E|}{j}$ in this case, the condition can be rewritten as 
    \[ j^2 < |E|.\]
\end{remark}

\begin{example}
    Theorem \ref{thm: inverse image of M} and Corollary \ref{cor: meta of completment by deg < 1/2 V} allow us to quickly deduce that many families of graphs have the property that each graph is the metamour graph of its complement:
    \begin{itemize}
        \item generalized Petersen graphs $P(n,k)$ for all $n\ge 4$,
        \item cycle graphs $C_n$ for all $n\ge 5$,
        \item path graphs $P_n$ for all $n\ge 5$,
        \item disconnected graphs,
        \item trees $T$ with $\diam(T)\ge 4$, and
        \item undirected Cayley graphs $\Gamma(G,S)$ with $2|S| < |G|$.
    \end{itemize}
    On the other hand, graphs of the following types are not the metamour graphs of any graphs:
    \begin{itemize}
        \item complete $k$-partite graphs for all $k\ge 2$,
        \item complements of generalized Petersen graphs $P(n,k)$ for $n\ge 4$,
        \item complements of cycle graphs $C_n$  for all $n\ge 6$, and
        \item complements of path graphs $P_n$ for all $n\ge 4$.
    \end{itemize}
\end{example}

\section{Graphs with Pseudo-Metamour Period $1$} \label{pseud1}

In this section, we will discuss graphs with metamour period $1$ and graphs with pseudo-metamour period $1$.
We start with the observation that there are only trivial graphs with metamour period $1$.

\begin{theorem}
    A graph $G$ has metamour period $1$ if and only if $G$ is edgeless.
\end{theorem}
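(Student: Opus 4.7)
The plan is to handle the two directions separately, both of which are essentially one-line arguments once the right observation is isolated.

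For the easier direction, I would assume $G$ is edgeless and verify $\M(G) = G$. Since $G$ has no edges, no two distinct vertices lie in the same connected component of $G$ unless they are equal, so every pair of distinct vertices has $d_G(x,y) = \infty$. In particular, no pair satisfies $d_G(x,y) = 2$, so $\M(G)$ has no edges and shares the vertex set $V(G)$, giving $\M(G) = G$. Minimality of $k = 1$ is automatic because $1$ is the smallest positive integer, so $G$ has metamour period $1$.

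For the converse, I would suppose $G$ has metamour period $1$, so $\M(G) = G$. The key step is to invoke Lemma \ref{lem: subset of G bar}, which yields $G = \M(G) \subseteq \overline{G}$. But by the very definition of the complement, $E(G) \cap E(\overline{G}) = \emptyset$, and combined with $E(G) \subseteq E(\overline{G})$ this forces $E(G) = \emptyset$, i.e., $G$ is edgeless.

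There is no genuine obstacle here; the whole content of the theorem is already encapsulated in Lemma \ref{lem: subset of G bar}. The only thing worth being careful about is the minimality clause in the definition of metamour period, which I would dispatch in a single sentence by noting that $k = 1$ is trivially the minimum positive integer.
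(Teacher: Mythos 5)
Your proposal is correct and follows the same route as the paper: the converse direction is exactly the paper's argument (apply Lemma~\ref{lem: subset of G bar} to get $G = \M(G) \subseteq \overline{G}$, which forces $E(G) = \emptyset$), and the forward direction is the ``trivial'' converse the paper leaves unstated. You have merely written out the details more explicitly.
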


\begin{proof}
    With Lemma \ref{lem: subset of G bar}, $M(G)=G$ implies $G = M(G) \subseteq \overline{G}$, and $G=\overline{K}_n$ for some $n\in \Z^+$ follows. The converse is trivial.
\end{proof}

In contrast, there exist many nontrivial examples of graphs with pseudo-metamour period $1$. In the following, we will provide two different families of graphs $G'$ with $M(G')=\overline{G'}\cong G'$, i.e., of metamour-complementary self-complementary graphs $G'$. In both cases we are going to see that any graph $G$ can be embedded as an induced subgraph into some metamour-complementary self-complementary graph. Thus, the class of metamour-complementary self-complementary graphs is large and of a complex structure.

Our first example is based on properties of Paley graphs.

\begin{definition} \label{def: paley}
    Let $q$ be a prime power such that $q\equiv 1 \mod{4}$. Then the \emph{Paley graph} $QR(q)$ has as vertex set the elements of the finite field $\mathbb{F}_q$, with two vertices being adjacent if and only if their difference is a nonzero square in $\mathbb{F}_q$.
\end{definition}

\begin{theorem} \label{thm: paley}
    Every Paley graph is metamour-complementary and self-complementary. Moreover, for any finite graph $G$ there exists a prime power $q\equiv 1 \mod{4}$ such that $G$ embeds as an induced subgraph into $QR(q)$.
\end{theorem}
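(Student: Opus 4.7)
The plan is to verify the two halves of the theorem separately.

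\emph{Self-complementarity and metamour-complementarity.} These are classical facts about Paley graphs. For self-complementarity, since $q \equiv 1 \pmod 4$ implies $-1 \in \mathbb{F}_q$ is a square, the adjacency relation is symmetric; I would then fix any non-square $n \in \mathbb{F}_q^\times$ and check that the map $x \mapsto nx$ interchanges edges and non-edges of $QR(q)$, giving an isomorphism onto $\overline{QR(q)}$. For metamour-complementarity, by Theorem \ref{M(G) = G^c} it suffices to prove $\diam(QR(q)) \leq 2$. Invoking the well-known fact that $QR(q)$ is strongly regular with parameters $(q,(q-1)/2,(q-5)/4,(q-1)/4)$, any two non-adjacent vertices share $(q-1)/4 \geq 1$ common neighbors whenever $q \geq 5$, and since the smallest admissible prime power is $q=5$, the diameter bound holds in every case.

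\emph{Universal embedding.} Here I would use the standard Weil-bound argument familiar from the theory of quasi-random graphs. Given a finite graph $G$ on $\{1,\ldots,n\}$, encode its adjacency by $\epsilon_{ij} = +1$ for edges and $\epsilon_{ij} = -1$ for non-edges. The aim is to find distinct elements $a_1,\ldots,a_n \in \mathbb{F}_q$ with $\chi(a_j - a_i) = \epsilon_{ij}$ for all $i<j$, where $\chi$ is the quadratic character of $\mathbb{F}_q$. Counting such ordered tuples (without the distinctness constraint) by expanding
\[ \sum_{a_1,\ldots,a_n \in \mathbb{F}_q} \prod_{i<j} \tfrac{1}{2}\bigl(1 + \epsilon_{ij}\,\chi(a_j - a_i)\bigr), \]
the leading diagonal term contributes $q^n/2^{\binom{n}{2}}$, while every other term is a sum $\sum_{\vec a} \chi(f(\vec a))$ for $f$ a product of distinct linear factors of the form $a_j - a_i$. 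Summing over one variable at a time and applying Weil's character-sum bound (each restricted polynomial is squarefree, hence not a perfect square) yields an error of $O_n(q^{n-1/2})$. Subtracting the $O(q^{n-1})$ tuples with a repeated coordinate, the main term dominates for all sufficiently large $q$, and since Dirichlet's theorem guarantees infinitely many primes $q \equiv 1 \pmod 4$, this produces the desired induced embedding.

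The main obstacle is the character-sum estimate in the second half: one must verify that after freezing all but one coordinate, the resulting single-variable polynomial is squarefree, so that Weil's bound applies and yields nontrivial cancellation. This follows immediately from the fact that each pair $(i,j)$ contributes at most one linear factor $a_j - a_i$ in the original product, so no repeated root can appear in any restriction.
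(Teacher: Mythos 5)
Your proof takes essentially the same route as the paper, which simply cites the literature for all three ingredients: self-complementarity is attributed to Sachs, the fact that nonadjacent vertices share $\frac{q-1}{4}$ common neighbors (hence $\diam \le 2$, hence metamour-complementarity via Theorem~\ref{M(G) = G^c}) is attributed to Godsil--Royle, and the induced-subgraph property is attributed to the quasi-randomness of Paley graphs \`a la Chung--Graham--Wilson. You have unpacked each citation into an actual argument, and the arguments you give are the standard ones underlying those references.

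One small imprecision in your final paragraph: it is not true that ``no repeated root can appear in any restriction.'' After freezing $a_1,\ldots,a_{n-1}$, the one-variable polynomial $\prod_{i}(a_n - a_i)$ (product over the indices $i$ paired with $n$ in the given term) acquires a repeated root whenever two of the frozen values coincide, so Weil's bound does not apply to those fibers. The standard fix is to note that such degenerate fibers number only $O_n(q^{n-2})$, and bounding the inner sum trivially by $q$ on them contributes $O_n(q^{n-1})$, which is still dominated by the main term $q^n/2^{\binom{n}{2}}$. With that adjustment your estimate $O_n(q^{n-1/2}) + O_n(q^{n-1})$ for the total error stands and the argument is complete.
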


\begin{proof}
    Let $q$ be a prime power such that $q\equiv 1 \mod{4}$. Then $QR(q)$ is self-complementary \cite{Sachs}, and such that every pair of distinct nonadjacent vertices shares $\frac{q-1}4$ common neighbors \cite[Section~10.3]{GodsilRoyle}. In particular, $\diam(G)=2$, and $G$ is metamour-complementary with Theorem~\ref{M(G) = G^c}. The embedding property follows as Paley graphs are quasi-random   \cite{ChungGrahamWilson}.
\end{proof}

An easier and more instructive example of metamour-complementary self-complementary graphs can be given with the help of the following general graph operation. Some similar constructions will be used in the next section.

\begin{definition} \label{def: join of graphs along a graph}
    Let $G = (V,E)$ be a graph, and let $\mathcal{G}=\{G_v \mid v\in V\}$ be a collection of graphs indexed by $V$. We define the \emph{join of $\mathcal{G}$ along $G$} to be the graph constructed as follows. Begin with $\bigcup_{v\in V} G_v$. For every $vw\in E$, include all possible edges between $G_v$ and $G_w$.
    Thus, for each $vw\in E$, the join $G_v \nabla G_w$ is a subgraph of the join of $\mathcal{G}$ along~$G$.
\end{definition}

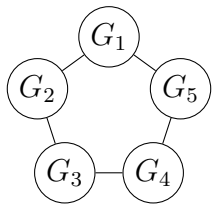
\begin{figure}[h!]
            \begin{center}
            \begin{tikzpicture}[>=stealth, node distance=2cm, every node/.style={circle, draw, minimum size=.5cm,inner sep=2pt},scale=.5]
        
              \node (A) at (90:2cm) {$G_1$};
              \node (B) at (162:2cm) {$G_2$};
              \node (C) at (234:2cm) {$G_3$};
              \node (D) at (306:2cm) {$G_4$};
              \node (E) at (18:2cm) {$G_5$};
            
              \draw[-] (A) -- (B);
              \draw[-] (B) -- (C);
              \draw[-] (C) -- (D);
              \draw[-] (D) -- (E);
              \draw[-] (E) -- (A);
        
            \end{tikzpicture}
            \end{center}
            \caption{The join of $\{G_1, \ldots, G_5\}$ along $C_5$}
            \label{fig: join along C5}
        \end{figure}  

See Figure~\ref{fig: join along C5} for the visualization of Definition~\ref{def: join of graphs along a graph} in the case where $G = C_5$.
(This particular case was constructed by Zelinka~\cite{Zelinka}, just before Proposition~2.)
In the figure, an edge between $G_i$ and $G_j$ represents all of the edges in $G_i \nabla G_j$. 

\begin{theorem} \label{thm: selfcomp}
    Every graph $G$ embeds as an induced subgraph into a metamour-complementary self-complementary graph $G'$ with $|V(G')| = 4\, |V(G)|+1$.
\end{theorem}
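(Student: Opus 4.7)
The plan is to construct $G'$ explicitly via the ``join along $C_5$'' construction from Definition \ref{def: join of graphs along a graph}, exploiting the self-complementarity of $C_5$ itself. Labeling the vertices of $C_5$ as $1, 2, 3, 4, 5$ with edges $12, 23, 34, 45, 51$, I would take $G'$ to be the join of $(A_1, A_2, A_3, A_4, A_5)$ along $C_5$, where
\[ (A_1, A_2, A_3, A_4, A_5) \coloneqq (K_1, G, \overline{G}, \overline{G}, G). \]
The vertex count $|V(G')| = 1 + 4|V(G)| = 4|V(G)| + 1$ is immediate, and since the induced subgraph on $V(A_2)$ picks up no extra edges from the other pieces, $G$ embeds into $G'$ as an induced subgraph.

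The main step is to prove $G' \cong \overline{G'}$. Unwinding definitions, $\overline{G'}$ is the join of $(\overline{A_1}, \overline{A_2}, \overline{A_3}, \overline{A_4}, \overline{A_5})$ along $\overline{C_5}$. The crucial ingredient is an isomorphism $\phi \colon C_5 \to \overline{C_5}$, viewed as graphs on the common vertex set $\{1, \ldots, 5\}$, given by $\phi(1) = 1$, $\phi(2) = 3$, $\phi(3) = 5$, $\phi(4) = 2$, $\phi(5) = 4$; a direct check shows that $\phi$ maps the five edges of $C_5$ bijectively onto the five edges of $\overline{C_5}$. The piece assignment has been rigged so that $A_i = \overline{A_{\phi(i)}}$ for every $i$, as a case-by-case check confirms: $A_1 = K_1 = \overline{A_1}$; $A_2 = G = \overline{A_3}$; $A_3 = \overline{G} = \overline{A_5}$; $A_4 = \overline{G} = \overline{A_2}$; $A_5 = G = \overline{A_4}$. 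Combining $\phi$ with the identity on each piece then yields a bijection $V(G') \to V(\overline{G'})$ that preserves adjacency both within and between pieces, giving the desired isomorphism.

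It remains to show that $G'$ is metamour-complementary, which by Theorem \ref{M(G) = G^c} reduces to $\diam(G') \leq 2$. Any two vertices in the same piece $A_i$ have a common neighbor in any piece $A_j$ with $ij \in E(C_5)$ (and every such $A_j$ is non-empty), while any two vertices in distinct pieces $A_i, A_j$ are either already adjacent ($ij \in E(C_5)$) or share a common neighbor in a piece $A_k$ with $ik, jk \in E(C_5)$, which exists because $\diam(C_5) = 2$. The main obstacle in the whole argument is really just the second step: pinning down a permutation $\phi$ together with a piece assignment that simultaneously embeds $G$ as an induced subgraph and forces $A_i = \overline{A_{\phi(i)}}$ for every $i$; once this alignment is found, everything else is routine bookkeeping.
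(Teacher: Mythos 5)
Your proposal is correct and is essentially the paper's own proof: the same join of $(K_1, G, \overline{G}, \overline{G}, G)$ along $C_5$, the same permutation $1\mapsto 1$, $2\mapsto 3$, $3\mapsto 5$, $4\mapsto 2$, $5\mapsto 4$ matching pieces with the complements of their images, and the same appeal to $\diam(G')\le 2$ via Theorem~\ref{M(G) = G^c}. The only quibble is that ``the identity on each piece'' should read ``a graph isomorphism $A_i \to \overline{A_{\phi(i)}}$'' (the pieces live on disjoint vertex sets, so it is a canonical identification rather than a literal identity), exactly as the paper phrases it with its maps $\varphi_i$.
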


\begin{proof}
   Let $G'$ denote the join of the family of graphs $\{G_1, \ldots, G_5\}$ along $C_5$; see Figure~\ref{fig: join along C5}. It is easy to verify that $G'$ is metamour-complementary with $M(G')=\overline{G'}$ as shown in Figure~\ref{fig: metamour along C5}. Note that this graph $M(G')=\overline{G'}$ is (isomorphic to) the join of the family of graphs $\{\overline{G}_1, \overline{G}_3, \overline{G}_5, \overline{G}_2, \overline{G}_4\}$  along $C_5$.
   
\begin{figure}[h!]
            \begin{center}
            \begin{tikzpicture}[>=stealth, node distance=2cm, every node/.style={circle, draw, minimum size=.5cm,inner sep=2pt},scale=.5]
        
              \node (A) at (90:2cm) {$\overline{G}_1$};
              \node (B) at (162:2cm) {$\overline{G}_2$};
              \node (C) at (234:2cm) {$\overline{G}_3$};
              \node (D) at (306:2cm) {$\overline{G}_4$};
              \node (E) at (18:2cm) {$\overline{G}_5$};
            
              \draw[-] (A) -- (C);
              \draw[-] (B) -- (D);
              \draw[-] (C) -- (E);
              \draw[-] (D) -- (A);
              \draw[-] (E) -- (B);
        
            \end{tikzpicture}
            \end{center}
            \caption{$M(G')=\overline{G'}$ for the graph $G$ in Figure~\ref{fig: join along C5}}
            \label{fig: metamour along C5}
        \end{figure}
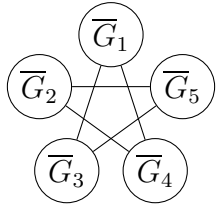  

   In case of the specific choice $G_1 := K_1$ trivial, $G_2 :=G_5 :=G$, and $G_3 := G_4 := \overline{G}$, any choice of isomorphisms $\varphi_1: G_1 \to \overline{G}_1$, $\varphi_2: G_2 \to \overline{G}_3$, $\varphi_3: G_3 \to \overline{G}_5$, $\varphi_4: G_4 \to \overline{G}_2$, and $\varphi_5: G_5 \to \overline{G}_4$ extends to an    isomorphism $\varphi: G' \to \overline{G'}$.\qedhere

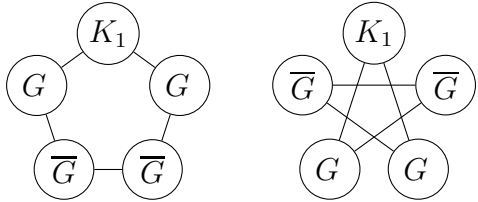
\begin{figure}[h!]
            \begin{center}
            \begin{tikzpicture}[>=stealth, node distance=2cm, every node/.style={circle, draw, minimum size=.5cm,inner sep=2pt},scale=.5]
        
              \node (A) at (90:2cm) {$K_1$};
              \node (B) at (162:2cm) {$\, G\ $};
              \node (C) at (234:2cm) {$\, \overline{G}\, $};
              \node (D) at (306:2cm) {$\, \overline{G}\, $};
              \node (E) at (18:2cm) {$\, G\ $};
            
              \draw[-] (A) -- (B);
              \draw[-] (B) -- (C);
              \draw[-] (C) -- (D);
              \draw[-] (D) -- (E);
              \draw[-] (E) -- (A);
        
            \end{tikzpicture}\qquad
            \begin{tikzpicture}[>=stealth, node distance=2cm, every node/.style={circle, draw, minimum size=.5cm,inner sep=2pt},scale=.5]
        
              \node (A) at (90:2cm) {$K_1$};
              \node (B) at (162:2cm) {$\, \overline{G}\, $};
              \node (C) at (234:2cm) {$\, G\ $};
              \node (D) at (306:2cm) {$\, G\ $};
              \node (E) at (18:2cm) {$\, \overline{G}\, $};
            
              \draw[-] (A) -- (C);
              \draw[-] (B) -- (D);
              \draw[-] (C) -- (E);
              \draw[-] (D) -- (A);
              \draw[-] (E) -- (B);
        
            \end{tikzpicture}
            \end{center}
            \caption{$G'$ and $\overline{G'}$ as in the proof of Theorem \ref{thm: selfcomp}}
            \label{fig: selfcomp}
        \end{figure}
   
\end{proof}

Note that any nontrivial metamour-complementary self-comple\-men\-tary graph has metamour period $2$. With Theorem \ref{thm: paley}, this includes Paley graphs.
We end this section with a general characterization of metamour-complementary graphs with metamour period $2$.

\begin{theorem} \label{thm: metcomplchar}
    A graph $G$ is metamour-complementary with metamour period $2$ if and only if $\diam(G) = \diam(\overline{G}) = 2$.
\end{theorem}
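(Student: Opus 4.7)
The plan is to read off both directions directly from Theorems~\ref{M(G) = G^c} and~\ref{thm: inverse image of M}. Recall that Theorem~\ref{M(G) = G^c} says $\M(G) = \overline{G}$ is equivalent to $\diam(G) \le 2$, while the equivalence (2)$\Leftrightarrow$(4) in Theorem~\ref{thm: inverse image of M} says $\M(\overline{G}) = G$ is equivalent to $\diam(\overline{G}) \le 2$. Granted these two prior results, the only real work is to upgrade ``$\le 2$'' to ``$= 2$'' on both sides by ruling out the degenerate cases $\diam \in \{0,1\}$.

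First I would handle the forward direction. Assuming $G$ is metamour-complementary with metamour period $2$, Theorem~\ref{M(G) = G^c} gives $\diam(G) \le 2$ at once, and then the chain $G = \M^2(G) = \M(\overline{G})$, combined with Theorem~\ref{thm: inverse image of M}, gives $\diam(\overline{G}) \le 2$. To exclude $\diam(G) \in \{0,1\}$: if $G$ is $\overline{K}_n$ (edgeless) or $K_1$, then $G = \overline{G}$ as labeled graphs, contradicting $\M(G) = \overline{G} \neq G$ required by period $2$; and if $G = K_n$ for some $n \ge 2$, then $\overline{G} = \overline{K}_n$ is disconnected, contradicting $\diam(\overline{G}) \le 2$. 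Hence $\diam(G) = 2$, and the same case analysis applied to $\overline{G}$ (whose metamour is in turn $G$) yields $\diam(\overline{G}) = 2$.

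The converse is essentially a one-line chain. Assume $\diam(G) = \diam(\overline{G}) = 2$. Theorem~\ref{M(G) = G^c} immediately gives $\M(G) = \overline{G}$, so $G$ is metamour-complementary. Applying Theorem~\ref{thm: inverse image of M} with $G$ and $\overline{G}$ interchanged gives $\M(\overline{G}) = G$, and therefore $\M^2(G) = \M(\overline{G}) = G$, so the metamour period of $G$ divides $2$. It cannot equal $1$, because $\diam(G) = 2$ forces $|V(G)| \ge 3$, and on at least two vertices the labeled graphs $G$ and $\overline{G}$ can never coincide; hence $\M(G) = \overline{G} \neq G$, and the period is exactly $2$. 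The only non-trivial bookkeeping in the whole argument, and hence the only potential snag, is the exclusion of the degenerate diameters $0$ and $1$ in the forward direction.
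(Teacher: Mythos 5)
Your argument is correct and follows essentially the same route as the paper, which simply reads both directions off of Theorem~\ref{M(G) = G^c} (together with Theorem~\ref{thm: inverse image of M}) and leaves the degenerate-diameter bookkeeping implicit; your write-up just makes that bookkeeping explicit. One harmless slip: for $n\ge 2$ the edgeless graph $\overline{K}_n$ satisfies $\overline{G}=K_n\neq G$, but that case is vacuous anyway since such a graph is disconnected and so never has diameter $0$ or $1$.
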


\begin{proof}

    This follows immediately from Theorem \ref{M(G) = G^c}.
    (See also~\cite[Prop.~1]{Zelinka}.)
\end{proof}

We will have more to say about graphs with metamour period $2$ in Section \ref{sec: per2}.

\section{Graphs with Even Metamour Period} 

In this section, we will see that for all even $k\ge 2$, the class of graphs with metamour period $k$ is large and of a complex structure. This is again evidenced by showing that every graph $G$ can be embedded as an induced subgraph into some graph with metamour period $k$ (see Theorem~\ref{thm:induced subgraph}). We will also show that the same is true for graphs with pseudo-metamour period $k$.

We start by introducing an auxiliary sequence of integers.

\begin{definition}
    Let $n$ be an odd positive integer.
    We define 
    \[ \mu(n) \coloneqq \min\{k\in\Z^+ \mid 2^k\equiv\pm1\mod{n}\}. \]
    For example, starting with $n=3$, the first few values of $\mu(n)$ are 
\begin{align*}
1, 2, 3, 3, 5, 6, 4, 4, 9, 6, 11, 10, 9, 14, 5, 5, 12, 18, 12, 10, 7, 12, 23, 21,\\
8, 26, 20, 9, 29, 30, 6, 6, 33, 22, 35, 9, 20, 30, 39, 27, 41, 8, 28, 11, \ldots\
\end{align*}
This sequence can be found as entry A003558 in the OEIS \cite{OEIS}.
\end{definition}


The following result on odd cycle graphs serves as a good comparison of metamour period versus pseudo-metamour period (see Definition~\ref{def:intro} above). Note that we exclude the case $n=3$ below, due to the fact that $C_3 = K_3$ and $\M(K_3)=\overline{K}_3$.
\begin{theorem} \label{thm: odd C_n}
    Let $n \geq 5$ be odd, and let $k,\ell\in\N$.
    We have the following:
    \begin{enumerate}
        \item $\M^k(C_n)\cong C_n$.
        \item $\M^k(C_n)=C_n$ if and only if $\mu(n) \mid k$.
        \item $\M^k(C_n)=\M^\ell(C_n)$ if and only if $k\equiv\ell\mod{\mu(n)}$.
    \end{enumerate}
\end{theorem}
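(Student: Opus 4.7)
The plan is to reduce the entire theorem to elementary modular arithmetic by parametrizing every metamour iterate as a ``cycle with step size.'' First, I would label $V(C_n)=\Z/n\Z$ with edges $\{i,i+1\}$, and for every $s\in\Z/n\Z$ with $\gcd(s,n)=1$ define $G_s$ to be the graph on $\Z/n\Z$ with edge set $\{\{i,i+s\}:i\in\Z/n\Z\}$. As labeled graphs one clearly has $G_s=G_{-s}$, and the relabeling $i\mapsto s^{-1}i$ is a graph isomorphism $G_s\to G_1=C_n$; in particular $G_s\cong C_n$.

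The key computational step is the lemma $\M(G_s)=G_{2s\bmod n}$. I would prove it by transporting along the isomorphism $i\mapsto s^{-1}i$: since $n\ge 5$, two vertices of $C_n$ are at graph distance $2$ iff their labels differ by $\pm 2$, and pulling this criterion back shows that two vertices of $G_s$ are at distance $2$ iff their labels differ by $\pm 2s$. Iterating the lemma immediately gives $\M^k(C_n)=G_{2^k\bmod n}$; since $n$ is odd, $\gcd(2^k,n)=1$, and part (1) follows from $G_{2^k\bmod n}\cong C_n$.

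For parts (2) and (3), I would use the elementary observation that $G_s=G_{s'}$ as \emph{labeled} graphs iff $s\equiv\pm s'\mod{n}$, which is immediate from comparing edge sets. Combined with the lemma above, this gives
\[ \M^k(C_n)=\M^\ell(C_n) \iff 2^k\equiv\pm 2^\ell \mod{n} \iff 2^{|k-\ell|}\equiv\pm 1\mod{n}, \]
where the second equivalence uses that $2$ is invertible modulo the odd number $n$. The set $S:=\{m\in\Z : 2^m\equiv\pm 1\mod{n}\}$ is closed under addition (multiply two congruences) and under negation (invert in $(\Z/n\Z)^\times$), hence is an additive subgroup of $\Z$; by minimality of $\mu(n)$ we get $S=\mu(n)\Z$. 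This yields (3), and (2) is the special case $\ell=0$.

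The main obstacle I anticipate is the careful verification that distance-$2$ pairs in $C_n$ are exactly those differing by $\pm 2$; here the hypothesis $n\ge 5$ is precisely what prevents the degenerate collapses at small $n$ (at $n=3$, $\pm 2\equiv\mp 1$ would make distance-$2$ neighbors coincide with neighbors; at $n=4$, $2\equiv -2$ would yield only one distance-$2$ neighbor). Once that routine check is done, everything else is bookkeeping on top of the two lemmas above.
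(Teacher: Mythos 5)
Your proposal is correct and takes essentially the same route as the paper: the paper's proof consists of labeling $V(C_n)$ by $\Z/n\Z$ and verifying that the edges of $\M^k(C_n)$ are exactly the pairs $v_i v_{i+2^k}$, which is precisely your step-size lemma iterated. You have simply written out the details (the distance-$2$ criterion for $n\ge 5$, the identification $G_s=G_{s'}\iff s\equiv\pm s'$, and the subgroup argument for $\mu(n)$) that the paper leaves as ``straightforward to verify.''
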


\begin{proof}
    Write the vertices of $C_n$ as $v_i$, $i\in\Z/n\Z$, with edges $v_i v_{i+1}$. Then it is straightforward to verify that the edges of $\M^k(C_n)$ are of the form $v_i v_{i+2^k}$. The statements of the theorem follow.
\end{proof}

\begin{remark}
    Theorem \ref{thm: odd C_n} takes also care of even cycle graphs $C_n$. In particular, given any $n\in \Z^+$, we can write $n=2^i u$ with $i\in \N$ and some odd $u\in \Z^+$. Then $\M^i(C_n)$ is the disjoint union of $2^i$ cycle graphs $C_u$, where we set $C_1 := K_1$.
\end{remark}


In the following, we describe a few more constructions based on our Definition \ref{def: join of graphs along a graph}. We start with a characterization of isomorphic graphs.

\begin{theorem} \label{thm: isomorphic join}
    Let $n \geq 7$ be odd, and let $\mathcal{G}=\{G_i \mid 1\leq i \leq n\}$ and $\mathcal{G}'=\{G'_i \mid 1\leq i \leq n\}$ be two collections of graphs, where we interpret all indices as elements of $\Z/n\Z$. Let $G$ denote the join of $\mathcal{G}$ along $C_n$ and $G'$ the join of $\mathcal{G}'$ along $C_n$, respectively.
    Then $G\cong G'$ if and only if there exists some $j\in \Z/n\Z$ such that either $G'_i\cong G_{j+i}$ or $G'_i\cong G_{j-i}$ for all $1\leq i \leq n$.
\end{theorem}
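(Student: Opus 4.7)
The plan is to prove both directions, with the reverse implication being the substantive one.

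\emph{If direction.} Suppose $G'_i \cong G_{j+i}$ via isomorphisms $\psi_i: V(G'_i) \to V(G_{j+i})$ for every $i \in \Z/n\Z$; the case $G'_i \cong G_{j-i}$ is symmetric. Combine them into a vertex bijection $\psi: V(G') \to V(G)$. Intra-block edges are preserved by each $\psi_i$, and an inter-block edge joining $V(G'_i)$ to $V(G'_{i+1})$ (all such pairs are edges in $G'$) maps to a pair in $V(G_{j+i}) \times V(G_{j+i+1})$, which are likewise all edges in $G$; similarly for non-edges. Hence $\psi$ is a graph isomorphism. For the reflection case, the same argument applies with $j+i$ replaced by $j-i$, noting that the neighbors of $j-i$ in $C_n$ are $(j-i)\pm 1 = j-(i\mp 1)$.

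\emph{Only-if direction.} The crux is to recover the block partition $\{V(G_i)\}_{i}$ intrinsically from $G$. We may assume all $G_i$ are nonempty (empty blocks may be discarded without affecting the isomorphism question). For each $u \in V(G)$, let
$$B_{\geq 3}(u) := \{w \in V(G) \mid d_G(u,w) \geq 3\}.$$
A direct calculation using the join structure yields $d_G(u,w) = d_{C_n}(i,k)$ whenever $u \in V(G_i)$ and $w \in V(G_k)$ with $k \neq i$, and $d_G(u,w) \leq 2$ whenever $u, w \in V(G_i)$; consequently
$$B_{\geq 3}(u) = \bigcup_{k \,:\, d_{C_n}(i,k) \geq 3} V(G_k),$$
which depends only on the block index $i$. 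For $n \geq 7$, distinct vertices of $C_n$ have distinct closed radius-$2$ balls (each being a set of $5$ consecutive indices in a cycle of length $\geq 7$), so the complementary far sets also differ; combined with nonemptiness of each block, this gives $B_{\geq 3}(u) = B_{\geq 3}(v)$ if and only if $u,v$ lie in the same block. Thus the block partition is canonical.

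Now let $\phi: G \to G'$ be any isomorphism. Since $\phi$ preserves distances, it sends $B_{\geq 3}$-classes to $B_{\geq 3}$-classes, inducing a bijection $\sigma: \Z/n\Z \to \Z/n\Z$ with $\phi(V(G_i)) = V(G'_{\sigma(i)})$. The complete bipartite adjacency between $V(G_i)$ and $V(G_{i+1})$ must be preserved, forcing $\sigma$ to be a graph automorphism of $C_n$. Thus $\sigma \in \operatorname{Aut}(C_n) = D_n$, so $\sigma(i) = j+i$ or $\sigma(i) = j-i$ for some $j \in \Z/n\Z$. Finally, $\phi$ restricts to a graph isomorphism $G_i \to G'_{\sigma(i)}$, so $G'_i \cong G_{\sigma^{-1}(i)}$, which has the required form (after replacing $j$ by $-j$ in the rotation case, and noting that the reflection $\sigma$ is an involution).

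The main obstacle is the block-recovery step; the hypothesis $n \geq 7$ is essential here, since for $n = 5$ the radius-$2$ ball in $C_5$ is the full vertex set and the far-set invariant collapses. An alternative route via modular decomposition would give the same conclusion (the $V(G_i)$ are precisely the maximal proper modules of $G$, since $C_n$ is a prime graph for $n \geq 5$), but the distance-based argument above is more elementary and avoids invoking external machinery.
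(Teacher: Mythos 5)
Your proof is correct, and the overall architecture matches the paper's: recover the block partition $\{V(G_i)\}$ by an isomorphism-invariant construction, then observe that the induced permutation of blocks must lie in $\operatorname{Aut}(C_n)\cong D_n$. The key step is carried out differently, however. The paper recovers the partition via metamour iterates: it notes that $\M^2(G)$ is the join of $\mathcal{G}$ along $\M^2(C_n)$, so that $E\big(G\cap\M^2(G)\big)\cup E\big(\M(G)\cap\M^3(G)\big)$ is exactly the set of all pairs of distinct vertices lying in a common block (this is where $n\ge 7$, i.e.\ $\mu(n)\ge 3$, enters). You instead use the far-sets $B_{\ge 3}(u)$, with $n\ge 7$ entering through the distinctness of radius-$2$ balls in $C_n$; this is more elementary (it needs only the distance formula $d_G(u,w)=d_{C_n}(i,k)$ for the join, not the behavior of the join under iterated $\M$), and it thematically decouples the theorem from the metamour machinery. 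You are also more explicit than the paper about the final reduction to $\operatorname{Aut}(C_n)$ and the bookkeeping with $\sigma^{-1}$. One small caveat: your parenthetical claim that ``empty blocks may be discarded without affecting the isomorphism question'' is not a valid reduction --- deleting an empty $G_i$ changes which surviving blocks are joined, and indeed the theorem as stated fails if empty blocks are permitted (e.g.\ for $n=7$, placing single vertices at positions $\{1,4\}$ versus $\{1,3\}$ yields $\overline{K}_2$ both times, with no dihedral map matching the index sets). The nonemptiness of each $G_i$ should simply be taken as part of the standing convention that graphs have nonempty vertex sets, which both your argument and the paper's implicitly require.
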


\begin{proof}
    Note that $\mu(n) \ge 3$ as $n\ge 7$. It is straightforward to verify that $\M^2(G)$ is the join of $\mathcal{G}$ along $\M^2(C_n)$. Thus $G \cap \M^2(G) =\bigcup_{1\leq i \leq n} G_i$, and similarly $\M(G) \cap \M^3(G) =\bigcup_{1\leq i \leq n} \overline{G}_i$. In particular,
    $$E\big(G \cap \M^2(G)\big)\cup E\big(\M(G) \cap \M^3(G)\big)= \bigcup_{1\leq i \leq n} \{xy \mid x,y \in V(G_i), x\ne y\},$$
    and we recover $V(G_i)$ (up to the index). With this information, we can rediscover $\{G_i \mid 1\leq i \leq n\}$ from $G$ up to an index shift/flip.
\end{proof}

We now can generalize Theorem \ref{thm: odd C_n} as follows:

\begin{theorem} \label{thm: odd C_n with decorations}
    Let $n \geq 5$ be odd, and let $\mathcal{G}=\{G_i \mid 1\leq i \leq n\}$ be a collection of graphs with at least one $G_i$ nontrivial. Let $G$ be the join of $\mathcal{G}$ along $C_n$.
    Similarly, let $\overline{\mathcal{G}}=\{\overline{G}_i \mid 1\leq i \leq n\}$ and let $G^0$ be the join of $\overline{\mathcal{G}}$ along $C_n$.  Then we have the following:  
    \begin{enumerate}
        \item $\M^k(G)=G$ if and only if $\mu(n) \mid k$ and $k$ is even.
        \item $\M^k(G)=G^0$ if and only if $\mu(n) \mid k$ and $k$ is odd.
        \item If $n\ge 7$ and $\overline{G}_i\ncong G_i = G_j$ for all $i,j$, then $\M^k(G) \cong \M^\ell(G)$ if and only if $k\equiv \ell \mod 2$. 
    \end{enumerate}
\end{theorem}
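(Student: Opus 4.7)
The plan is to first establish a structural lemma describing how $\M$ acts on a join along $C_n$: assuming $n \geq 5$ is odd and all $G_i$ are nonempty, the metamour graph $\M(G)$ is the join of $\overline{\mathcal{G}} = \{\overline{G}_i\}$ along $\M(C_n)$. To prove this, I would analyze distances in $G$ by cases. Two vertices in the same block $V(G_i)$ are at distance $2$ exactly when they are nonadjacent in $G_i$ (using any vertex of $V(G_{i\pm 1})$ as a common neighbor), so within-block edges of $\M(G)$ form $\overline{G}_i$; two vertices in distinct blocks $V(G_i)$ and $V(G_j)$ have distance $d_{C_n}(i,j)$ in $G$, so $\M(G)$ contains all edges between them precisely when $d_{C_n}(i,j) = 2$, matching the cycle $\M(C_n)$.

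Iterating this lemma and using that the edges of $\M^k(C_n)$ are $v_i v_{i + 2^k}$ (from the proof of Theorem \ref{thm: odd C_n}), induction gives that $\M^k(G)$ is the join of $\mathcal{G}$ along $\M^k(C_n)$ when $k$ is even, and the join of $\overline{\mathcal{G}}$ along $\M^k(C_n)$ when $k$ is odd. Parts (1) and (2) then become direct comparisons: equality $\M^k(G) = G$ (respectively $G^0$) forces both $\M^k(C_n) = C_n$, equivalent to $\mu(n) \mid k$ by Theorem \ref{thm: odd C_n}, and the correct parity of $k$ to match the within-block decorations. The parity condition is genuinely needed, since the hypothesis that some $G_{i_0}$ is nontrivial ensures $G_{i_0} \neq \overline{G}_{i_0}$ as labeled graphs (equality would force $|V(G_{i_0})| \leq 1$), so the two decoration choices produce different graphs.

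For part (3), the extra hypothesis that $n \geq 7$ and all $G_i$ are isomorphic to a common $H$ with $\overline{H} \ncong H$ reduces $\M^k(G)$, up to isomorphism, to one of two graphs: the join of $n$ copies of $H$ along $C_n$ (if $k$ is even) or $n$ copies of $\overline{H}$ along $C_n$ (if $k$ is odd). When $k \equiv \ell \pmod 2$ these coincide immediately, since all decorations agree and $\M^k(C_n) \cong C_n$. When $k \not\equiv \ell \pmod 2$, I would apply Theorem \ref{thm: isomorphic join}: any isomorphism between the two joins would yield an index shift or flip identifying $H$ with $\overline{H}$, contradicting $\overline{H} \ncong H$.

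The main obstacle I anticipate is the structural lemma itself, specifically verifying that within-block edges of $\M(G)$ are exactly those of $\overline{G}_i$ (no spurious edges, none missing) and tracking vertex labels carefully through the $v_i \mapsto v_{i+2^k}$ reindexing, so that equality of labeled graphs (as needed in parts (1) and (2)) is cleanly distinguished from mere isomorphism (part (3)). Once the action of $\M$ on such joins is pinned down, the three parts reduce to bookkeeping with Theorems \ref{thm: odd C_n} and \ref{thm: isomorphic join}.
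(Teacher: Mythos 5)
Your proposal is correct and follows essentially the same route as the paper: the paper's proof consists precisely of the structural claim that $\M^k(G)$ is the join of $\mathcal{G}$ (for $k$ even) or $\overline{\mathcal{G}}$ (for $k$ odd) along $\M^k(C_n)$, followed by appeals to Theorems~\ref{thm: odd C_n} and~\ref{thm: isomorphic join}. Your one-step distance analysis and induction simply fill in what the paper calls ``straightforward to verify,'' and your parity and nontriviality bookkeeping matches the intended argument.
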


\begin{proof}
    It is straightforward to verify that $\M^k(G)$ is the join of either $\mathcal{G}$ (when $k$ is even), or $\overline{\mathcal{G}}$ (when $k$ is odd), along $\M^k(C_n)$. The first two statements of the theorem follow from this and Theorem \ref{thm: odd C_n}. The last statement follows from Theorem \ref{thm: isomorphic join}.
\end{proof}

As an example of Theorem \ref{thm: odd C_n with decorations}, 
let $\mathcal{G}=\{G_i  | \, 1\leq i \leq 7\}$ where every $G_i = \overline{K}_2$, and let $G$ be the join of $\mathcal{G}$ along $C_7$, see Figure \ref{fig:Jonathan's "Dream Catcher" Graph}.
Similarly, let $G^0$ be the join of $\{\overline{G}_i = K_2 \mid  1\leq i \leq 7\}$ along $C_7$.
Since $\mu(7)=3$, we have the following three results from Theorem~\ref{thm: odd C_n with decorations}:
\begin{enumerate}
    \item $\M^k(G)=G$ if and only if $6|k$.
    \item $\M^k(G)=G^0$ if and only if $k$ is an odd multiple of $3$.
    \item $\M^k(G)\cong \M^\ell(G)$ if and only if $k\equiv\ell\mod 2$.
\end{enumerate}

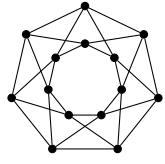
\begin{figure}[h!]
        \centering
            \begin{tikzpicture}[dot/.style={draw,fill,circle,inner sep=1pt},scale=.5]
                \foreach \i in {1,...,7} \node[dot] (w\i) at ( \i*360/7+90:1)   {};
                \foreach \i in {1,...,7} \node[dot] (q\i) at ( \i*360/7+90:2)   {};
                \foreach \i/\j in {1/2,2/3,3/4,4/5,5/6,6/7,7/1} \draw[black] (w\i) -- (w\j);
                \foreach \i/\j in {1/2,2/3,3/4,4/5,5/6,6/7,7/1} \draw[black] (q\i) -- (q\j);
                \foreach \i/\j in {1/2,3/4,5/6,7/1,2/3,4/5,6/7} \draw[black] (w\i) -- (q\j);
                \foreach \i/\j in {1/2,3/4,5/6,7/1,2/3,4/5,6/7} \draw[black] (q\i) -- (w\j);
            \end{tikzpicture}
        \caption{The join of seven copies of $\overline{K}_2$ along $C_7$}
        \label{fig:Jonathan's "Dream Catcher" Graph}
    \end{figure}

We continue with a construction of graphs with even metamour period $k$.

\begin{theorem}
\label{thm:induced subgraph}
    Let $G$ be a graph and $k\in\Z^+$ even. Then $G$ embeds as an induced subgraph into a graph $G'$ with metamour period $k$. In addition, we can achieve $|V(G')|=|V(G)|+4$ for $k=2$ and $|V(G')|=|V(G)|+2^k-2$ for $k\ge 4$.
\end{theorem}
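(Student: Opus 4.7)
The plan is to construct $G'$ explicitly as a join of $G$ with several copies of $K_1$ along an appropriately chosen odd cycle, and then read off the metamour period using Theorem~\ref{thm: odd C_n with decorations}.

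For the case $k = 2$, I would take $G'$ to be the join of the family $\{G, K_1, K_1, K_1, K_1\}$ along $C_5$, which has exactly $|V(G)| + 4$ vertices. By Definition~\ref{def: join of graphs along a graph}, the join operation only introduces edges between distinct $G_i$, so the induced subgraph on $V(G)$ is $G$ itself. A direct calculation gives $\mu(5) = 2$ (since $2^2 \equiv -1 \pmod{5}$), and Theorem~\ref{thm: odd C_n with decorations}(1) then yields $\M^2(G') = G'$. Since $G'$ contains edges (for example, between the $K_1$-vertices at adjacent cycle positions), $G'$ is not edgeless, so its period cannot be $1$, and therefore equals $2$.

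For $k \geq 4$ even, I would set $n = 2^k - 1$ (odd, with $n \geq 15$) and take $G'$ to be the join of $\{G, K_1, K_1, \ldots, K_1\}$ with $n - 1 = 2^k - 2$ copies of $K_1$, along $C_n$. This gives exactly $|V(G)| + 2^k - 2$ vertices, with $G$ realized as an induced subgraph as before. The essential numerical step is $\mu(2^k - 1) = k$. The upper bound $\mu(2^k - 1) \leq k$ follows immediately from $2^k \equiv 1 \pmod{2^k - 1}$. For the matching lower bound, I would observe that for every $1 \leq j < k$ we have $1 < 2^j < 2^k - 1$, so $2^j \not\equiv 1 \pmod{2^k - 1}$; and since $0 < 2^j + 1 \leq 2^{k-1} + 1 < 2^k - 1$ (using $k \geq 3$), we also have $2^j \not\equiv -1 \pmod{2^k - 1}$. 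Therefore $\mu(2^k - 1) = k$. By Theorem~\ref{thm: odd C_n with decorations}(1), $\M^j(G') = G'$ if and only if $k \mid j$ and $j$ is even; because $k$ is itself even, the smallest such positive $j$ is $k$, so $G'$ has metamour period exactly $k$.

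I expect the main obstacle to be the verification that $\mu(2^k - 1) = k$, but as sketched above this reduces to elementary size comparisons of powers of $2$. A minor bookkeeping issue is that Theorem~\ref{thm: odd C_n with decorations} requires at least one $G_i$ in the family to be nontrivial; if $G$ itself is trivial, the construction collapses to $G' = C_n$ and the period is then read off directly from Theorem~\ref{thm: odd C_n}, giving period $\mu(5) = 2$ for $k = 2$ and period $\mu(2^k - 1) = k$ for $k \geq 4$.
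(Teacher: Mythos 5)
Your proposal is correct and follows essentially the same route as the paper: the paper's proof also takes $n=5$ for $k=2$ and $n=2^k-1$ for $k\ge 4$, joins $G$ with $n-1$ copies of $K_1$ along $C_n$, and invokes Theorem~\ref{thm: odd C_n with decorations} (falling back on Theorem~\ref{thm: odd C_n} when $G$ is trivial). Your explicit verification that $\mu(2^k-1)=k$ is a welcome addition, as the paper merely asserts it.
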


\begin{proof}
    Let $n=5$ for $k=2$ and $n=2^k-1$ for $k\ge 4$. Observe that $n\ge 5$ with $\mu(n)=k$.
    Let $\mathcal{G}=\{G_i \mid 1\leq i \leq n\}$ with $G_1=G$ and $G_i=K_1$ for $2\leq i \leq n$.
    Let $G'$ be the join of $\mathcal{G}$ along $C_n$ so that $G$ embeds as an induced subgraph into $G'$. By Theorem \ref{thm: odd C_n with decorations} (or Theorem~\ref{thm: odd C_n} if $G$ is trivial), $G'$ has metamour period $k$.
\end{proof}

In particular, for any even $k$, there are infinitely many connected graphs whose metamour period is $k$. This result is in stark contrast to our upcoming Conjecture~\ref{conj: odd periodic} (concerning odd metamour periods).

We close this section with a variation of Theorem \ref{thm:induced subgraph} which provides a construction of graphs with even pseudo-metamour period $k$.

\begin{theorem}
\label{thm:induced subgraph2}
    Let $G$ be a graph and $k\in\Z^+$ even. Then $G$ embeds as an induced subgraph into a graph $G'$ with metamour period $k$ and $G'\ncong \M^i(G')$ for all $1\le i< k$. In particular, $G'$ has pseudo-metamour period $k$.
\end{theorem}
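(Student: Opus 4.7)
The plan is to refine the construction in Theorem~\ref{thm:induced subgraph} so that $G' \ncong \M^i(G')$ for every $1 \le i < k$, not merely $G' \ne \M^i(G')$. The proof there uses a single non-trivial decoration around the cycle $C_n$, and the symmetry of the many $K_1$ fillers makes $\M^i(G')$ isomorphic (though not equal) to $G'$ for all even $i$, yielding pseudo-metamour period $\le 2$. To destroy this, I introduce a second non-trivial decoration $H$ chosen with enough asymmetry to block every spurious isomorphism. Throughout, I may assume $G \ne K_1$ without loss of generality, since $K_1$ is an induced subgraph of any non-empty graph.

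For $k \ge 4$, set $n = 2^k - 1$, so that $\mu(n) = k$ and $n \ge 15$. Choose a finite graph $H$ with $H \ncong K_1$, $H \ncong G$, $H \ncong \overline{G}$, and $H \ncong \overline{H}$; such $H$ exists in abundance. Let $G'$ be the join of $\mathcal{G} = \{G_0 := G,\, G_1 := H,\, G_2 := \cdots := G_{n-1} := K_1\}$ along $C_n$, so that $G$ sits as an induced subgraph at position $0$. Theorem~\ref{thm: odd C_n with decorations}(1) immediately yields metamour period exactly $k$. To upgrade to pseudo-metamour period $k$, I apply Theorem~\ref{thm: isomorphic join} (valid since $n \ge 7$): an isomorphism $G' \cong \M^i(G')$ amounts to a rotation or reflection of $C_n$ matching the two non-$K_1$ positions of the respective decoration sequences. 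The sequence of $\M^i(G')$ for even $i$ places $G, H$ at positions $0, 2^{-i} \bmod n$; a rotation would force $2^i \equiv 1 \pmod{n}$ and a reflection would force $2^i \equiv -1 \pmod{n}$, neither possible for $1 \le i < k$, since the multiplicative order of $2$ modulo $n$ equals $k$ and $2^i + 1 < n$ throughout this range. For odd $i$, the sequence carries $\overline{G}, \overline{H}$ instead, and the conditions $H \ncong \overline{G}$ and $H \ncong \overline{H}$, combined with a short case split on whether $G$ is self-complementary, block every possible matching.

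For $k = 2$ this strategy fails because only $n = 5 < 7$ is admissible, so Theorem~\ref{thm: isomorphic join} is unavailable. Instead, replace $G$ by $\widetilde{G} := G \cup K_2$; this is disconnected whereas $\overline{\widetilde{G}}$ is connected of diameter $\le 2$, so $\widetilde{G}$ is not self-complementary. Apply Theorem~\ref{thm:induced subgraph} to $\widetilde{G}$ to obtain $G'$ of metamour period $2$ containing $G$ as an induced subgraph. Here $G'$ and $\M(G')$ are joins along $C_5$ with a single non-$K_1$ decoration, namely $\widetilde{G}$ and $\overline{\widetilde{G}}$; by a direct comparison of the decoration blocks (identified via vertex degree patterns), any isomorphism would force $\widetilde{G} \cong \overline{\widetilde{G}}$, which fails. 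The main obstacle is the odd-$i$ case of the $k \ge 4$ analysis, where the complementation in $\M^i$ couples with the shift by $2^i$ to create a handful of potential coincidences; the hypotheses imposed on $H$ are engineered precisely to preclude each of them, uniformly in whether $G$ itself happens to be self-complementary.
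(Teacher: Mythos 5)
Your $k\ge 4$ argument is sound, and it is essentially the paper's own approach in a slightly different costume: the paper takes $n=2^k+1$ and makes \emph{all} blocks $G_i$ have pairwise distinct vertex counts, so that (since complementation preserves vertex counts) Theorem~\ref{thm: isomorphic join} forces any candidate shift or flip to fix every position, hence forces $2^i\equiv\pm1\pmod{n}$; your two-marker version with $n=2^k-1$ accomplishes the same thing at the cost of the extra hypotheses on $H$ and the case split on whether $G$ is self-complementary, all of which you handle correctly.

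The genuine gap is in your $k=2$ case. You assert that in the join $G'$ of $\{\widetilde G,K_1,K_1,K_1,K_1\}$ along $C_5$ the decoration blocks can be ``identified via vertex degree patterns,'' but they cannot in general: writing $a=|V(\widetilde G)|$, a vertex $v$ of the $\widetilde G$-block has degree $\deg_{\widetilde G}(v)+2$ in $G'$, which equals $2$ (the degree of the two singleton blocks at cycle-distance two from $\widetilde G$) whenever $v$ is isolated in $\widetilde G$, and dually a dominating vertex of $\overline{\widetilde G}$ has degree $a+1$ in $\M(G')$, the same as the two singleton blocks adjacent to the $\overline{\widetilde G}$-block. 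So the degree multiset does not recover the block partition, and the step ``any isomorphism would force $\widetilde G\cong\overline{\widetilde G}$'' is unproved as written. The conclusion is true and can be rescued: each block of a join along $C_5$ is a module of $G'$, and since $C_5$ is prime, every module of $G'$ other than $V(G')$ is contained in a single block, so the five blocks are precisely the maximal proper modules; as $\widetilde G$ occupies the unique block with more than one vertex, any isomorphism $G'\to\M(G')$ must carry it onto the $\overline{\widetilde G}$-block, yielding $\widetilde G\cong\overline{\widetilde G}$ and contradicting the disconnectedness of $\widetilde G$. (For comparison, the paper's one-line proof takes $n=2^k+1$ with all $|V(G_i)|$ distinct and cites Theorem~\ref{thm: isomorphic join} uniformly; for $k=2$ this also lands on $n=5$, outside that theorem's stated hypothesis $n\ge 7$, so a supplementary argument of exactly this kind is needed there as well.)
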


\begin{proof}
    Let $n=2^k+1$. Then $n\ge 5$ with $\mu(n)=k$.
    Choose any collection of graphs $\mathcal{G}=\{G_i \mid 1\leq i \leq n\}$ such that $G_1=G$ and all $|V(G_i)|$ are distinct.
    Let $G'$ be the join of $\mathcal{G}$ along $C_n$, and use Theorem~\ref{thm: isomorphic join} for the result.
\end{proof}



\section{More on Graphs With Metamour Period $2$}  \label{sec: per2}

In this section and the next, we study graphs with metamour period $2$ and $3$, respectively. In particular, in this section, we will be focusing on the question whether there exist any graphs
with metamour period $2$ that do not result from joining graphs along $C_5$. We will aim towards a
more complete characterization of graphs with metamour period $2$.

\begin{example}
    It is straightforward to verify that $\M^2(C_5)=C_5$. More generally, by adjoining vertices to $C_5$ as depicted in Figure \ref{C5 + C3}, we can construct additional examples of graphs with metamour period $2$.
    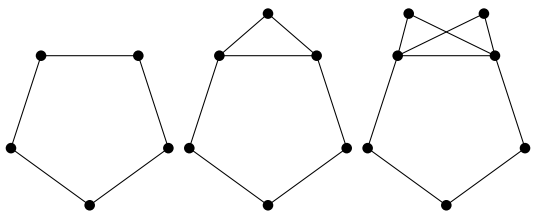
\begin{figure}[h!]
    \begin{center}
        \begin{tikzpicture}
            \foreach \i in {1,...,5}
            {
                \coordinate (P\i) at ({72 * \i - 18}:1.1cm);
            }
            
            \draw (P1) -- (P2) -- (P3) -- (P4) -- (P5) -- cycle;
            
            \foreach \i in {1,...,5}
            {
                \fill (P\i) circle (2pt);
            }
        \end{tikzpicture}
        \begin{tikzpicture}
            \foreach \i in {1,...,5}
            {
                \coordinate (P\i) at ({72 * \i - 18}:1.1cm);
            }
            \coordinate (P6) at (0,1.45);
            
            \draw (P1) -- (P2) -- (P3) -- (P4) -- (P5) -- cycle;
            \draw (P1) -- (P6) -- (P2);
            
            \foreach \i in {1,...,6}
            {
                \fill (P\i) circle (2pt);
            }
        \end{tikzpicture}
        \begin{tikzpicture}
            \foreach \i in {1,...,5}
            {
                \coordinate (P\i) at ({72 * \i - 18}:1.1cm);
            }
            \coordinate (P6) at (-0.5,1.45);
            \coordinate (P7) at (0.5,1.45);
            
            \draw (P1) -- (P2) -- (P3) -- (P4) -- (P5) -- cycle;
            \draw (P1) -- (P6) -- (P2);
            \draw (P1) -- (P7) -- (P2);
            
            \foreach \i in {1,...,7}
            {
                \fill (P\i) circle (2pt);
            }
        \end{tikzpicture}
    \end{center}
    \caption{Some examples of graphs with metamour period~$2$.  The graph in the middle is $\widehat{C}_5$ from Definition~\ref{def:Chat}.}
    \label{C5 + C3}
    \end{figure}
\end{example}

These examples lead to a general construction of an infinite family of graphs with metamour period $2$, constructed via Definition \ref{def: join of graphs along a graph}.
An important element of this construction will be the graph defined below: 

\begin{definition}
\label{def:Chat}
    We write $\widehat{C}_5$ to denote the graph obtained from $C_5$ by adding an additional vertex that is connected to two adjacent vertices of $C_5$.  (The notation is meant to suggest $C_5$ with a ``hat.''
    This is the graph in the center of Figure \ref{C5 + C3}.)
\end{definition}

The graph $\widehat{C}_5$ already provides a simple example of a graph with metamour period $2$ that is not the result of joining graphs along $C_5$. Can we tell any more about the structure of graphs with metamour period $2$? We start with a very general observation.

\begin{theorem} \label{thm:genconstr}
    Let $G$ be a graph with metamour period $2$, and let $G'$ denote the join of a collection of graphs $\mathcal{G}$ along $G$. Then $G'$ has again metamour period $2$.
    In particular, this holds for any join of graphs $\{G_1, \ldots, G_5\}$ along $C_5$, see Figure \ref{fig: join along C5}, and any join of graphs $\{G_1, \ldots, G_6\}$ along $\widehat{C}_5$, see Figure \ref{C_5 party hat with arbitrary graphs}.
    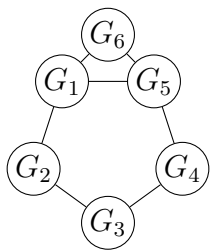
\begin{figure}[h!]
        \centering
        \begin{tikzpicture}[>=stealth, node distance=2cm, every node/.style={circle, draw, minimum size=0cm,inner sep=1pt},scale=.65]
          \node (A) at (-234:1.6cm) {$G_1$};
          \node (B) at (-306:1.6cm) {$G_5$};
          \node (C) at (-18:1.6cm) {$G_4$};
          \node (D) at (-90:1.6cm) {$G_3$};
          \node (E) at (-162:1.6cm) {$G_2$};
          \node (F) at (0,2.25) {$G_6$};
          \draw[-] (A) -- (B);
          \draw[-] (B) -- (C);
          \draw[-] (C) -- (D);
          \draw[-] (D) -- (E);
          \draw[-] (E) -- (A);
          \draw[-] (A) -- (F);
          \draw[-] (B) -- (F);
    \end{tikzpicture}
    \caption{Join of $\{G_1, \ldots, G_6\}$ along $\widehat{C}_5$}
    \label{C_5 party hat with arbitrary graphs}
    \end{figure}
\end{theorem}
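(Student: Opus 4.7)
The plan is to compute $\M(G')$ explicitly and show that it has the same ``join along a graph'' structure as $G'$, but with each factor $G_v$ replaced by its complement $\overline{G_v}$ and the base graph $G$ replaced by $\M(G)$. Iterating this observation once yields $\M^2(G')$ as a join of $\{G_v\}$ along $\M^2(G) = G$, which is $G'$ itself, and a short check will rule out period $1$.

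The core of the argument is a distance analysis in $G'$ for two distinct vertices $x \in V(G_v)$ and $y \in V(G_w)$. For $v \neq w$, the construction of the join along $G$ gives $xy \in E(G')$ iff $vw \in E(G)$, and any common neighbor of $x$ and $y$ in $G'$ must lie in some block $V(G_u)$ with $uv, uw \in E(G)$; hence $d_{G'}(x,y) = 2$ iff $d_G(v,w) = 2$, i.e., iff $vw \in E(\M(G))$. For $v = w$, the pair $xy$ is already an edge iff $xy \in E(G_v)$, and as long as $v$ has at least one neighbor $u$ in $G$, every vertex of $G_u$ is a common neighbor of $x$ and $y$, forcing $d_{G'}(x,y) = 2$ whenever $xy \notin E(G_v)$. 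Taken together, these identify $\M(G')$ as the join of $\{\overline{G_v} \mid v \in V(G)\}$ along $\M(G)$. Applying the same observation to $\M(G')$ gives $\M^2(G') = G'$. For the period to be exactly $2$, I would verify $\M(G') \neq G'$: if some $G_v$ has at least two vertices then the within-block edge sets $E(G_v)$ and $E(\overline{G_v})$ differ; otherwise every $G_v = K_1$, so $G' = G$ and $\M(G') = \M(G) \neq G$ by the hypothesis that $G$ has metamour period~$2$.

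The main technical hurdle will be Step 2 in the case where $v$ is an isolated vertex of $G$ (or, on iteration, of $\M(G)$), since then the block $V(G_v)$ behaves as a stand-alone component of $G'$ and internal distances there are controlled entirely by $G_v$. In the natural setting of the theorem — in particular the examples $C_5$ and $\widehat{C}_5$ appearing in the statement — every vertex of $G$ and $\M(G)$ has a neighbor, so this subtlety does not arise and the argument goes through cleanly. In the fully general case one would either impose an explicit non-isolation hypothesis, or handle isolated vertices component-wise by requiring the corresponding $G_v$ to have matching periodicity, noting that either way the conclusion reduces to the structural identity for $\M(G')$ established above.
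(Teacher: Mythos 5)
Your proposal is correct and follows essentially the same route as the paper: the paper's entire proof is the assertion that $\M(G')$ is the join of $\{\overline{G}_v\}$ along $\M(G)$, from which $\M^2(G')=G'$ follows, and your distance analysis is exactly the verification of that assertion, plus the (worthwhile) check that the period is not $1$. Your observation about isolated vertices of $G$ (or of $\M(G)$) is a genuine caveat that the paper's ``straightforward to check'' glosses over --- for instance, joining a nontrivial $G_v$ onto an isolated vertex of $C_5 \cup K_1$ really does break the claimed identity --- though it is vacuous for the connected examples $C_5$ and $\widehat{C}_5$ emphasized in the statement.
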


\begin{proof}
    Let $\mathcal{G}= \{ G_i \mid i\in I \}$, and denote $\overline{\mathcal{G}}= \{ \overline{G}_i \mid i\in I \}$. Using Definition \ref{def: M}, it is straightforward to check that $\M(G')$ 
    is the join of $\overline{\mathcal{G}}$ along $\M(G)$.    
    From this, it is straightforward to verify that \mbox{$\M^2(G')=G'$}.
\end{proof}

\begin{example}
   The graph on the right side of Figure \ref{C5 + C3} is the join of $\{K_1,K_1,K_1,K_1,K_1,\overline{K}_2\}$ along $\widehat{C}_5$.

   Note also that every nontrivial metamour-complementary self-com\-ple\-mentary graph $G$ has metamour period $2$ and thus qualifies for Theorem \ref{thm:genconstr}. This includes all Paley graphs, see Theorem \ref{thm: paley}. 
\end{example}
    

Our next result can be viewed as a generalization of Theorem \ref{thm: metcomplchar}.

\begin{theorem} \label{thm: period 2 metamour diameter constraints}
    Suppose $G=(V,E)$ is a connected graph with metamour period $2$. Then $\M(G)$ is connected and either
    \[ \diam(G) = \diam(\M(G)) = 2 \quad \textup{with} \quad \M(G)=\overline{G},\]
    or
    \[ \diam(G) = \diam(\M(G)) = 3 \quad \textup{with} \quad \M(G)\subsetneq\overline{G}.\]
    Moreover, for all distinct $x,y\in V$, we have
    \begin{align*} 
        d_G(x,y)=1 &\iff d_{\M(G)}(x,y) = 2,\\ 
        d_G(x,y)=2 &\iff d_{\M(G)}(x,y) = 1,\\ 
        d_G(x,y)=3 &\iff d_{\M(G)}(x,y) = 3.
    \end{align*}
    
\end{theorem}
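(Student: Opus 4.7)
The plan is to establish the three biconditional distance identities first, and then derive the diameter statements and the connectedness of $\M(G)$ as consequences. The first two biconditionals come essentially for free from Definition~\ref{def: M}: the equivalence $d_G(x,y)=2 \iff xy \in E(\M(G)) \iff d_{\M(G)}(x,y)=1$ is immediate, and applying it with the roles of $G$ and $\M(G)$ interchanged (legitimate since $\M^2(G)=G$) yields $d_G(x,y)=1 \iff d_{\M(G)}(x,y)=2$.

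Next I would bound $\diam(G)\le 3$ by contradiction. Assume $G$ contains a geodesic $v_0,v_1,v_2,v_3,v_4,\ldots$ of length at least $4$. Then $v_0v_2,\, v_2v_4 \in E(\M(G))$, so $d_{\M(G)}(v_0,v_4)\le 2$, and the first two biconditionals force $d_G(v_0,v_4)\in\{1,2\}$, contradicting $d_G(v_0,v_4)=4$. The case $\diam(G)=1$ is incompatible with metamour period $2$ since then $G=K_n$ with $n\ge 2$ and $\M(K_n)=\overline{K}_n$ is a fixed point of $\M$; hence $\diam(G)\in\{2,3\}$. When $\diam(G)=2$, Theorem~\ref{M(G) = G^c} gives $\M(G)=\overline{G}$, and then Theorem~\ref{thm: metcomplchar} (combined with the metamour period $2$ hypothesis) forces $\diam(\overline{G})=2$ as well. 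When $\diam(G)=3$, any pair $x,y$ with $d_G(x,y)=3$ witnesses $xy \in E(\overline{G})\setminus E(\M(G))$, so $\M(G)\subsetneq\overline{G}$.

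For the distance-$3$ biconditional, suppose $d_G(x,y)=3$ with geodesic $x=v_0,v_1,v_2,v_3=y$. Then $v_0v_2\in E(\M(G))$, and since $v_2v_3\in E(G)$ the first biconditional gives $d_{\M(G)}(v_2,v_3)=2$; a common $\M(G)$-neighbor $w$ of $v_2$ and $v_3$ then produces the walk $v_0,v_2,w,v_3$, so $d_{\M(G)}(x,y)\le 3$. The matching lower bound is automatic: $xy\notin E(\M(G))$ (since $d_G(x,y)\ne 2$), and $d_{\M(G)}(x,y)=2$ would force $d_G(x,y)=1$, a contradiction. The converse implication is identical with $G$ and $\M(G)$ interchanged. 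From the three biconditionals together with $\diam(G)\le 3$, every pair of distinct vertices $x,y$ satisfies $d_{\M(G)}(x,y)\in\{1,2,3\}$, which simultaneously yields connectedness of $\M(G)$ and the equality $\diam(\M(G))=\diam(G)$.

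The main obstacle is the upper bound $d_{\M(G)}(x,y)\le 3$ in the distance-$3$ biconditional: every other step is either an unpacking of the definition of $\M$ or a direct appeal to Theorems~\ref{M(G) = G^c} and~\ref{thm: metcomplchar}, whereas this step requires actively constructing a walk in $\M(G)$ by invoking the already-established distance-$2$ biconditional to produce the necessary common $\M(G)$-neighbor of $v_2$ and $v_3$.
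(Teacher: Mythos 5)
Your proof is correct and follows essentially the same route as the paper's: periodicity swaps distances $1$ and $2$, a distance-$4$ pair would contradict periodicity, and Theorem~\ref{M(G) = G^c} (with Theorem~\ref{thm: metcomplchar}) handles the diameter-$2$ case. The only cosmetic difference is that the paper deduces connectedness of $\M(G)$ up front from the fact that $\M^2(G)=G$ is connected and then obtains the distance-$3$ equivalence by elimination, whereas you construct an explicit length-$3$ walk in $\M(G)$ and recover connectedness at the end; both are valid.
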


\begin{proof}
    Since $\M^2(G) = G$ is connected, $\M(G)$ is connected. It follows that $\diam(G) \geq 2$ since $\M(K_n)=\overline{K}_n$ is disconnected for $n\ge 2$ while $K_1$ has metamour period $1$.
    Moreover, if there existed $x,y\in V$ with $d_G(x,y)=4$, then there would exist $z\in V$ so that $d_G(x,z)=d_G(z,y)=2$. This would force $xz,yz\in E(\M(G))$ and $xy\not\in E(\M(G))$. By metamour periodicity, this would show that $xy\in E$, a contradiction. As $\M(G)$ also has metamour period $2$, the roles of $G$ and $\M(G)$ are symmetrical and we see that
    \[ 2 \leq \diam(G), \diam(\M(G)) \leq 3.\]

    Now, if $\diam(G)=2$, then for distinct $x,y\in V$, either $xy\in E$ or $xy \in \M(G)$. From this, it follows that $\M(G) = \overline{G}$. By periodicity and Theorem \ref{thm: inverse image of M}, $\diam(\overline{G})=2$ as well. By symmetry, we see that 
    \[ \diam(G) =2 \iff \diam(\M(G)) = 2. \]

    From this, we also conclude that $\diam(G) =3 \iff \diam(\M(G)) = 3$. In this case, the existence of distance-three vertices implies that $\M(G)\neq\overline{G}$.

    Now let $x,y\in V$ be distinct. By periodicity, $d_G(x,y)=1$ implies $d_{\M(G)}(x,y) = 2$.
    Moreover, by Definition \ref{def: M}, $d_G(x,y)=2$ implies $d_{\M(G)}(x,y) = 1$. 
    By symmetry, we thus obtain both equivalences involving distance 1 and 2.
    In turn, this yields the equivalence involving distance $3$. 
\end{proof} 



\begin{theorem} \label{thm: C5 with party hat}
    Suppose $G$ is a connected graph with metamour period $2$ and $\diam(G) = 3$. Then $G$ contains a subgraph isomorphic to $\widehat{C}_5$ $($shown in the middle of Figure~\ref{C5 + C3}$)$.
\end{theorem}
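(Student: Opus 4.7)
My plan is to construct $\widehat{C}_5$ explicitly. Since $\diam(G) = 3$, there exist vertices $a, b \in V(G)$ with $d_G(a, b) = 3$, and by the distance equivalences of Theorem~\ref{thm: period 2 metamour diameter constraints} we have $d_{\M(G)}(a, b) = 3$. I would fix a shortest $\M(G)$-path $a - u - v - b$. Its three edges give $d_G(a, u) = d_G(u, v) = d_G(v, b) = 2$, while minimality forces $d_{\M(G)}(a, v) = d_{\M(G)}(u, b) = 2$; by the theorem this yields $av, ub \in E(G)$.

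Next, I would pick a common $G$-neighbor $w$ of $u$ and $v$, which exists since $d_G(u, v) = 2$. The key step is the following dichotomy: exactly one of $aw \in E(G)$ or $bw \in E(G)$ holds. The triangle inequality gives $d_G(a, w) \leq 2$ (via $v$) and $d_G(b, w) \leq 2$ (via $u$); if neither $aw$ nor $bw$ were in $E(G)$, then both pairs would lie at $G$-distance exactly $2$, so $aw$ and $bw$ would be edges of $\M(G)$, producing a path $a - w - b$ of length $2$ in $\M(G)$ and contradicting $d_{\M(G)}(a, b) = 3$. On the other hand, if both $aw$ and $bw$ were edges, then $d_G(a, b) \leq 2$ via $w$, also a contradiction.

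In the case $aw \in E(G)$, I would pick $r$ to be a common $G$-neighbor of $v$ and $b$ (existing since $d_G(v,b)=2$) and exhibit the six-vertex subgraph with edges $\{av, aw, vw, vr, rb, bu, uw\}$. This is isomorphic to $\widehat{C}_5$: the hat is $a$ (adjacent to the two triangle endpoints $v$ and $w$), and the $5$-cycle is $v - w - u - b - r - v$. The case $bw \in E(G)$ is handled symmetrically by picking a common $G$-neighbor $p$ of $a$ and $u$, yielding the triangle $\{b, u, w\}$ extended by the path $w - v - a - p - u$.

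The main conceptual step, and the only truly nontrivial one, is the dichotomy on $w$, which rests on the bound $d_{\M(G)}(a, b) = 3$ from Theorem~\ref{thm: period 2 metamour diameter constraints}. The remaining work is a routine distinctness check of the six chosen vertices in each case; for instance, $r \neq w$ in the first case follows from $rb \in E(G)$ together with $bw \notin E(G)$ (given by the dichotomy), and $u \neq r$ follows from $rv \in E(G)$ together with $d_G(u, v) = 2$. All other distinctness claims reduce to similarly short distance arguments.
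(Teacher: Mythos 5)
Your proof is correct and follows essentially the same route as the paper's: both start from a distance-$3$ pair, take a shortest length-$3$ path in $\M(G)$, use the distance equivalences of Theorem~\ref{thm: period 2 metamour diameter constraints} to obtain the chords $av,ub\in E(G)$, and then locate a common neighbor of two adjacent cycle vertices to serve as the hat. Your explicit dichotomy on $w$ (exactly one of $aw$, $bw$ lies in $E(G)$) is a clean reorganization of the case analysis that the paper instead handles by splitting on vertex coincidences in the expanded walk, and it has the pleasant side effect that the distinctness checks (e.g.\ $w\neq r$) fall out automatically.
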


\begin{proof}
    Let $x,y \in V(G)$ with $d_G(x,y) = 3$. As $d_{\M(G)}(x,y) = 3$, let $(w_0,w_1,w_2,w_3)$ be a minimal path from $x$ to $y$ in $\M(G)$. For $0\leq i \leq 2$, since $w_i w_{i+1} \in E(\M(G))$, there is some $u'_i \in V(G)$ with $w_iu'_i,u'_iw_{i+1} \in E(G)$. 
    
    Define a walk $(u_0,u_1,\dots,u_6)$ by
    \[
        u_i =
        \begin{cases}
            w_{i/2}, & \text{for $i$ even}, \\
            u'_{(i-1)/2}, &\text{otherwise.}
        \end{cases}
    \]
    After possibly relabeling (see Figure \ref{Path of length 3}), it is straightforward to verify that there are two possibilities:
    \begin{enumerate}
        \item All of the vertices $u_i$ are distinct.
        \item The only vertices $u_i$ that are not distinct are $u_3 = u_5$.
    \end{enumerate}
    
    By minimality, we have $u_0u_4,u_2u_6\in E(G)$, giving us the two possible configurations depicted in Figure \ref{Path of length 3}.    
        \begin{figure}[h!]
        \begin{center}
            \begin{tikzpicture}
              \coordinate (u0) at (-1,1);
              \coordinate (u1) at (-2,0);
              \coordinate (u2) at (-1,-1);
              \coordinate (u3) at (0,-2);
              \coordinate (u4) at (1,-1);
              \coordinate (u5) at (2,0);
              \coordinate (u6) at (1,1);
            
              \draw [blue,thick,dotted] (u0) -- (u2) -- (u4) -- (u6);
              \draw (u0) -- (u1) -- (u2) -- (u3) -- (u4) -- (u5) -- (u6);
    
              \foreach \i in {0,...,6}
                {
                    \fill (u\i) circle (2pt);
                }
            
              \foreach \i in {0,...,3}
                {
                    \node[anchor=east] at (u\i) {$u_{\i}$};
                }
            \foreach \i in {4,...,6}
                {
                    \node[anchor=west] at (u\i) {$u_{\i}$};
                }
    
                \draw (u0) -- (u4);
                \draw (u2) -- (u6);
            \end{tikzpicture}
            \begin{tikzpicture}
              \coordinate (u0) at (-1,1);
              \coordinate (u1) at (-2,0);
              \coordinate (u2) at (-1,-1);
              \coordinate (u3) at (0,-2);
              \coordinate (u4) at (1,-1);
              \coordinate (u5) at (1,1);
            
              \draw [blue,thick,dotted] (u0) -- (u2) -- (u4) -- (u6);
              \draw (u0) -- (u1) -- (u2) -- (u3) -- (u4);
    
              \foreach \i in {0,...,5}
                {
                    \fill (u\i) circle (2pt);
                }
            
              \foreach \i in {0,...,2}
                {
                    \node[anchor=east] at (u\i) {$u_{\i}$};
                }
            \node[anchor=north] at (u3) {$u_3=u_5$};
            \node[anchor=west] at (u4) {$u_4$};
            \node[anchor=west] at (u5) {$u_6$};
    
                \draw (u0) -- (u4);
                \draw (u3) -- (u5);
                \draw (u2) -- (u5);
            \end{tikzpicture}
            \end{center}
        \caption{Two possible paths of length $3$ in $\M(G)$}
        \label{Path of length 3}
        \end{figure}
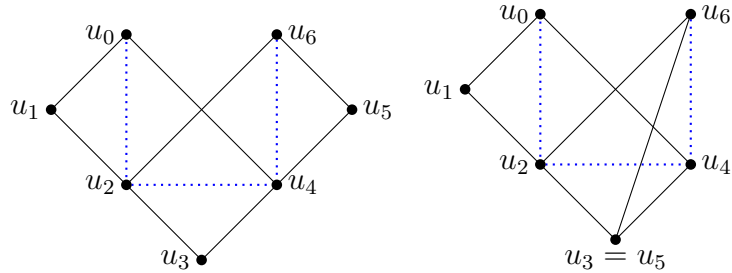        
        If the vertices are all distinct, then, after possibly relabeling, it is straightforward to check that $u_0 u_3 \in E(\M(G))$ and $u_3 u_6 \in E(G)$. As a result, in either possibility from above, we have a copy of $\widehat{C}_5$  (see Figure \ref{C_5 party hat subgraph}).
        \begin{figure}[h!]
        \begin{center}
            \begin{tikzpicture}
              \foreach \i in {1,...,5}
              {
                \coordinate (P\i) at ({72 * \i - 18}:1.25cm);
              }
              \coordinate (P6) at (0,1.75);
            
              \draw (P1) -- (P2) -- (P3) -- (P4) -- (P5) -- cycle;
              \draw (P1) -- (P6) -- (P2);
            
              \foreach \i in {1,...,6}
              {
                \fill (P\i) circle (2pt);
              }
              \node[anchor=west] at (P1) {$u_2$};
              \node[anchor=east] at (P2) {$u_3$};
              \node[anchor=east] at (P3) {$u_4$};
              \node[anchor=west] at (P4) {$u_0$};
              \node[anchor=west] at (P5) {$u_1$};
              \node[anchor=west] at (P6) {$u_6$};
              
            \end{tikzpicture}
        \end{center}
    \caption{$\widehat{C}_5$ with labeled vertices}
    \label{C_5 party hat subgraph}
    \end{figure}
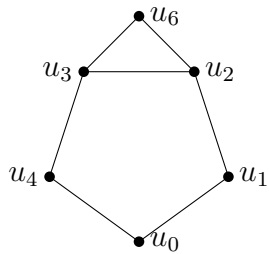
\end{proof}

In light of Theorem \ref{thm: C5 with party hat}, it seems reasonable to ask for a full characterization of diameter-$3$ graphs with metamour period $2$ (see Theorem~\ref{thm: period 2 metamour diameter constraints}). In particular, we have the following question:
\begin{question} \label{q1}
    Is every connected diameter-$3$ graph with metamour period $2$ the join of some graphs along $\widehat{C}_5$ $($as in Figure~\ref{C_5 party hat with arbitrary graphs}$)$?
\end{question}

A negative answer to Question \ref{q1} may result in the discovery of a whole new family of diameter-$3$ graphs with metamour period $2$ similar to Paley graphs as diameter-$2$ graphs with metamour period $2$.

In this context, one may also ask the following question:
\begin{question} \label{q2}
    Is every connected diameter-$2$ graph with metamour period $2$ the join of some graphs along a Paley graph?
\end{question}



\section{Graphs With Metamour Period $3$} 

In contrast to Section \ref{sec: per2}, we will provide a full characterization of graphs with metamour period $3$ in Theorem \ref{thm: classification period 3} and Corollary \ref{cor: classification period 3}. 

En route to these results, we begin with a slate of lemmas.
\begin{lemma} \label{Disjointedness}
    Let $G$ be a connected graph with metamour period $3$.
    \begin{enumerate}
        \item Then $E(G)$, $E(\M(G))$, and $E(\M^2(G))$ are pairwise disjoint.
        \item If $v_1v_2,v_2v_3 \in E(G')$ for some $G' \in \{G,\M(G),\M^2(G)\}$, then $v_1v_3 \notin E(\M^2(G'))$.
        \item If $v_1v_2,v_2v_3 \in E(G')$ and $v_1v_2',v_2'v_3 \in E(\M(G'))$ for some $G' \in \{G,\M(G),\M^2(G)\}$, then $v_1v_3 \in E(\M(G'))$.
    \end{enumerate}
\end{lemma}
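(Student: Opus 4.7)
The plan is to establish the three parts in sequence, with each serving as a tool for the next.

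For part (1), I would appeal directly to Lemma \ref{lem: subset of G bar}: for any graph $H$ we have $\M(H) \subseteq \overline{H}$, and hence $E(H) \cap E(\M(H)) = \emptyset$. Instantiating this with $H = G$, $H = \M(G)$, and $H = \M^2(G)$, and using the hypothesis $\M^3(G) = G$ to identify $E(G)$ with $E(\M^3(G))$, yields all three pairwise disjointness statements.

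For part (2), I would argue by contradiction. Suppose $v_1 v_3 \in E(\M^2(G'))$; then automatically $v_1 \ne v_3$, and part (1) forces $v_1 v_3 \notin E(G')$. But $v_2$ is a common neighbor of $v_1$ and $v_3$ in $G'$, so $d_{G'}(v_1, v_3) = 2$, which yields $v_1 v_3 \in E(\M(G'))$. This contradicts part (1), since $E(\M(G'))$ and $E(\M^2(G'))$ are disjoint.

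Part (3) is where the main obstacle lies, and it dissolves once one observes that $\M$ permutes the orbit $\{G, \M(G), \M^2(G)\}$, so part (2) applies verbatim with $G'$ replaced by $\M(G')$. Assuming $v_1 \ne v_3$ (which is implicit in the conclusion being a simple-graph edge), the hypothesis $v_1 v_2', v_2' v_3 \in E(\M(G'))$ then gives, via part (2) applied to $\M(G')$, that $v_1 v_3 \notin E(\M^2(\M(G'))) = E(\M^3(G')) = E(G')$. Combined with the $G'$-walk $v_1, v_2, v_3$, this forces $d_{G'}(v_1, v_3) = 2$, i.e., $v_1 v_3 \in E(\M(G'))$. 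The only genuinely new ingredient beyond parts (1) and (2) is this bootstrapping step $G' \mapsto \M(G')$, which exploits the cyclic structure of the metamour orbit under the period-$3$ hypothesis.
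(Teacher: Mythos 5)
Your proposal is correct and follows essentially the same route as the paper: part (1) via Lemma \ref{lem: subset of G bar} together with $\M^3(G)=G$, and parts (2) and (3) by combining the dichotomy ``a common neighbor forces $v_1v_3$ to lie in $E(H)$ or $E(\M(H))$'' with the disjointness from part (1). The only cosmetic difference is in part (3), where you apply part (2) to $\M(G')$ to exclude $E(G')$ and then use the $G'$-walk, whereas the paper uses the $\M(G')$-walk for the dichotomy and applies part (2) to $G'$ to exclude $E(\M^2(G'))$ --- a mirror image of the same argument.
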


\begin{proof}
    The first part follows from Lemma \ref{lem: subset of G bar} applied to each of the pairs of graphs $\M^i(G)$ and $\M^{i+1}(G)$ for $0\leq i \leq 2$ combined with metamour period $3$. 
    For the second part, Definition \ref{def: M} shows that $v_1v_2,v_2v_3 \in E(G')$ implies $v_1 v_3$ is in either $E(G')$ or $E(\M(G'))$. Combined with part one, we are done. 
    For the third part, $v_1v_2',v_2'v_3 \in E(\M(G'))$ implies $v_1 v_3$ is in either $E(\M(G'))$ or $E(\M^2(G'))$. Combined with part two, we are done.
\end{proof}

\begin{lemma} \label{trail condition}
    Suppose $G=(V,E)$ is a connected graph with metamour period $3$. For $v_1v_2 \in E$, there is a walk $(w_0,w_1,\dots,w_8)$ in $G$ such that $w_0=v_1$, $w_8 = v_2$, $w_0w_2, w_2w_4, w_4w_6, w_6w_8 \in E(\M(G))$, and $w_0w_4, w_4w_8 \in E(\M^2(G))$, see Figure \ref{Adjacency in M^3(G)}. Furthermore, one of the following must hold:
    \begin{enumerate}
        \item All vertices $w_i$ are distinct.
        \item The only vertices $w_i$ that are not distinct are $w_0 = w_7$ and $w_1 = w_8$.
    \end{enumerate}
\end{lemma}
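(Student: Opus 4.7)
The plan is first to build the walk by unfolding the metamour period~$3$ hypothesis three times, and then to carry out a systematic case analysis to determine which among the nine positions $w_0, \ldots, w_8$ can coincide.

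For the construction, the edge $w_0w_8 = v_1v_2 \in E(G) = E(\M^3(G))$ gives $d_{\M^2(G)}(w_0,w_8)=2$, so I pick a midpoint $w_4$ with $w_0w_4, w_4w_8 \in E(\M^2(G))$. Each of these unfolds in turn through $\M^2(G) = \M(\M(G))$ into $\M(G)$-midpoints $w_2$ and $w_6$, yielding the four edges $w_0w_2, w_2w_4, w_4w_6, w_6w_8 \in E(\M(G))$. Finally, each of these $\M(G)$-edges $w_{2k}w_{2k+2}$ is itself a $G$-distance-$2$ pair, so I pick a common $G$-neighbor $w_{2k+1}$, producing the desired walk.

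Next I would verify that the even-indexed vertices $w_0, w_2, w_4, w_6, w_8$ are pairwise distinct. Any identification among them would collapse one of the listed edges into two different (and, by Lemma~\ref{Disjointedness}(1), disjoint) edge classes. For example, $w_0=w_6$ would place $w_0w_4=w_6w_4$ in both $E(\M^2(G))$ and $E(\M(G))$; $w_2=w_6$ would produce a length-$2$ $\M(G)$-walk $w_0-w_2-w_8$, forcing the $G$-edge $w_0w_8$ into $E(\M(G)) \cup E(\M^2(G))$. The remaining eight pairs yield analogous contradictions. I would then rule out most odd-index coincidences by walk-shortening: each of the identifications $w_1=w_3,\, w_3=w_5,\, w_5=w_7,\, w_1=w_5,\, w_3=w_7$ produces a length-$2$ $G$-walk between two vertices required to be at $G$-distance at least $3$ by the presence of an $E(\M^2(G))$-edge. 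Odd-even collisions other than $w_0=w_7$ and $w_1=w_8$, such as $w_1=w_6$, instead force a $G$-edge $w_1w_2 = w_6w_2$ that clashes with the $\M(G)$-path $w_2-w_4-w_6$.

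The subtlest identification is $w_1=w_7$, which I would eliminate by two applications of Lemma~\ref{Disjointedness}(3): the length-$2$ paths $w_0-w_1-w_6$ in $G$ and $w_0-w_2-w_6$ in $\M(G)$ together force $w_0w_6 \in E(\M(G))$; combined with $w_6w_8 \in E(\M(G))$, Lemma~\ref{Disjointedness}(2) then yields $w_0w_8 \notin E(G)$, contradicting the hypothesis. What remains is to verify that if either $w_0=w_7$ or $w_1=w_8$ occurs, both must; a partial identification again supplies a short walk that forces an edge into the wrong edge class, leaving precisely cases (i) and (ii). I expect the main obstacle to be simply the length of the case analysis --- on the order of fifteen pairwise identifications to rule out or link --- but in each case the three-way disjointness of Lemma~\ref{Disjointedness}(1) together with a walk of length at most $2$ closes the argument.
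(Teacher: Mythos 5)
Your construction of the walk and your treatment of the even-indexed vertices and of most coincidences match the paper's argument, but three of your case dismissals do not actually go through as stated. First, $w_3=w_5$ is not killed by your walk-shortening mechanism: the length-$2$ $G$-walk it produces is $w_2\text{--}w_3\text{--}w_6$, and there is no $E(\M^2(G))$-edge between $w_2$ and $w_6$ --- from $w_2w_4,w_4w_6\in E(\M(G))$ one only knows $w_2w_6\in E(\M(G))\cup E(\M^2(G))$, so $d_G(w_2,w_6)=2$ is not immediately contradictory. The paper instead uses Lemma~\ref{Disjointedness}(3) to deduce $w_2w_6\in E(\M(G))$ under the hypothesis $w_3=w_5$, and then needs a further two-branch argument (on whether $w_0w_6$ lies in $E(\M(G))$ or $E(\M^2(G))$) to reach a contradiction via $w_0w_8$ or $w_4w_6$. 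Second, your elimination of $w_1=w_7$ invokes a length-$2$ path $w_0\text{--}w_2\text{--}w_6$ in $\M(G)$, i.e.\ it presupposes $w_2w_6\in E(\M(G))$, which is exactly the kind of fact that is not available; this case also requires a genuine multi-step argument.

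The most serious gap is the last step. Showing that $w_0=w_7$ forces $w_1=w_8$ (and symmetrically) is the bulk of the paper's proof: one must split on whether $w_1w_8\in E(G)$ or $w_1w_8\in E(\M(G))$, and each branch requires a chain of deductions (e.g.\ in the first branch one derives $w_8w_2\in E(G)$, then analyzes $w_8w_3$, using the previously established fact $w_1w_3\in E(\M(G))$; in the second branch one derives $w_1w_6\in E(\M(G))$ via Lemma~\ref{Disjointedness}(3) and then analyzes $w_1w_4$). Your proposal compresses this into ``a partial identification again supplies a short walk that forces an edge into the wrong edge class,'' which is not a proof and, as the $w_3=w_5$ case already shows, the single-short-walk heuristic is not strong enough here. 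You would need to supply these three arguments explicitly (and, along the way, establish the auxiliary facts $w_1w_3,w_5w_7\in E(\M(G))$ that the final case analysis consumes) before the lemma is proved.
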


\begin{figure}[h!]
    \begin{center}
        \begin{tikzpicture}
          \coordinate (w0) at (-1.175,1.175);
          \coordinate (w1) at (-2.55,0.35);
          \coordinate (w2) at (-2.2,-1);
          \coordinate (w3) at (-0.75,-1.2);
          \coordinate (w4) at (0,0);
          \coordinate (w5) at (0.75,-1.2);
          \coordinate (w6) at (2.2,-1);
          \coordinate (w7) at (2.55,0.35);
          \coordinate (w8) at (1.175,1.175);
          
          \foreach \i in {0,...,8}
          {
            \filldraw (w\i) circle (2pt);
          }
        
          \draw (w0) -- (w8);
          \foreach \i in {0,...,7}
          {
            \pgfmathtruncatemacro{\j}{\i+1}
            \draw (w\i) -- (w\j);
          }
          \foreach \i in {0,2,4,6}
          {
            \pgfmathtruncatemacro{\j}{\i+2}
            \draw [blue,thick,dotted] (w\i) -- (w\j);
          }
          \draw[red,thick,dashed] (w0) -- (w4);
          \draw[red,thick,dashed] (w4) -- (w8);
        
          \node[anchor=south, inner sep=4pt] at (w0) {$w_{0}=v_1$};
          \node[anchor=east, inner sep=4pt] at (w1) {$w_{1}$};
          \node[anchor=north, inner sep=4pt] at (w2) {$w_{2}$};
          \node[anchor=north, inner sep=4pt] at (w3) {$w_{3}$};
          \node[anchor=north, inner sep=10pt] at (w4) {$w_{4}$};
          \node[anchor=north, inner sep=4pt] at (w5) {$w_{5}$};
          \node[anchor=north, inner sep=4pt] at (w6) {$w_{6}$};
          \node[anchor=west, inner sep=4pt] at (w7) {$w_{7}$};
          \node[anchor=south, inner sep=4pt] at (w8) {$w_{8}=v_2$};
        
        \end{tikzpicture}
    \end{center}
\caption{\mbox{Eight walk in a graph with metamour period $3$}}
\label{Adjacency in M^3(G)}
\end{figure}

\begin{proof}
    Begin with $w_0=v_1$ and $w_8=v_2$ so that $w_0 w_8 \in E$. 
    By metamour period $3$, there exists $w_4 \in V$ so that $w_0 w_4, w_4 w_8 \in E(\M^2(G))$. 
    From this it follows that there exist $w_2, w_6 \in V$ so that $w_0w_2$, $w_2w_4$, $w_4w_6$, $w_6w_8 \in E(\M(G))$.
    Finally, this shows that there exist $w_1$, $w_3$, $w_5$, $w_7 \in V$ so that $(w_0,w_1,\dots,w_8)$ in a walk in $G$.
    
    By construction, any pair of adjacent vertices in Figure \ref{Adjacency in M^3(G)} must be distinct in $G$ (solid lines), respectively $\M(G)$ (dotted lines), and $\M^2(G)$ (dashed lines). Furthermore, Lemma \ref{Disjointedness} shows that $w_0 \notin \{w_3,w_5,w_6\}$, $w_1 \notin \{w_3,w_4,w_5,w_6\}$, and $w_2 \notin \{w_5,w_6,w_7,w_8\}$.

    Next we show that $w_3$ and $w_5$ are distinct. By way of contradiction, suppose $w_3 = w_5$. From Lemma \ref{Disjointedness}(3), we would have $w_2w_6 \in E(\M(G))$. Therefore, either $w_0w_6 \in E(\M(G))$, which implies $w_0w_8 \notin E$, or $w_0w_6 \in E(\M^2(G))$, which implies $w_4w_6 \notin E(\M(G))$. Contradiction.
    By similar arguments, $w_1 \neq w_7$ and $w_1w_3,w_5w_7 \in E(\M(G))$.

    By symmetry, it only remains to show that assuming $w_0 = w_7$ and $w_1 \neq w_8$ leads to a contradiction, see Figure \ref{C_7 + 1}.
    \begin{figure}[h!]
    \begin{center}
        \begin{tikzpicture}
              \foreach \i in {1,...,7}
              {
                \coordinate (P\i) at (-{360/7 * (\i-2)}:1.6cm);
              }
              \coordinate (P8) at (0.7,2.3);
            
              \draw (P1) -- (P2) -- (P3) -- (P4) -- (P5) -- (P6) -- (P7) -- cycle;
              \draw (P1) -- (P8);
            
              \foreach \i in {1,...,8}
              {
                \fill (P\i) circle (2pt);
              }
              \node[anchor=west, inner sep=6pt] at (P1) {$w_0 = w_7 = v_1$};
              \node[anchor=west, inner sep=6pt] at (P8) {$w_8 = v_2$};
              \foreach \i in {2,...,7}
              {
                \pgfmathtruncatemacro{\j}{\i-1}
                \node[anchor={-360/7 * (\i-3) - 180}, inner sep=6pt] at (P\i) {$w_{\j}$};
              }
    
              \foreach \i in {1,3,5}
              {
                \pgfmathtruncatemacro{\j}{\i+2}
                \draw [blue,thick,dotted] (P\i) -- (P\j);
              }
              \draw [blue,thick,dotted] (P7) -- (P8);
              \draw [blue,thick,dotted] (P2) -- (P4);
              \draw [blue,thick,dotted] (P6) -- (P1);
              \draw[red,thick,dashed] (P1) -- (P5);
              \draw[red,thick,dashed] (P5) -- (P8);
            \end{tikzpicture}
        \end{center}
    \caption{$w_0 = w_7$ and $w_1 \neq w_8$}
    \label{C_7 + 1}
    \end{figure}
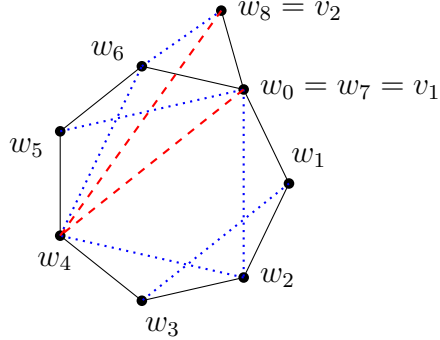

    Case 1: $w_1w_8 \in E$.
    Since $w_0w_8\in E$, we get $w_8w_2 \notin E(\M(G))$ and hence $w_8w_2 \in E$. Thus, either $w_8w_3 \in E$, which implies $w_8w_4 \notin E(\M^2(G))$, or $w_8w_3 \in E(\M(G))$, which implies $w_1w_8 \notin E$. Contradiction.

    Case 2: $w_1w_8 \in E(\M(G))$.
    By Lemma \ref{Disjointedness}, we have $w_1w_6 \in E(\M(G))$. Since $w_1w_6,w_6w_4 \in E(\M(G))$, we get either $w_1w_4 \in E(\M(G))$, which implies $w_1w_2 \notin E$, or $w_1w_4 \in E(\M^2(G))$, which implies $w_8w_1 \notin E(\M(G))$. Contradiction.
\end{proof}

\begin{lemma} \label{Adjacent pairs}
    Suppose $G$ is a connected graph with metamour period $3$. Then $G$ contains an induced copy of either $C_7$ or $C_9$.
\end{lemma}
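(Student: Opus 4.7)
The plan is to apply Lemma \ref{trail condition} to an arbitrary edge of $G$, read off a cycle subgraph of length $7$ or $9$, and then verify that this cycle is induced. Since $G$ has metamour period~$3$, an earlier theorem rules out $G$ being edgeless, so I pick any $v_1v_2 \in E(G)$ and feed it into Lemma \ref{trail condition}. This produces a walk $(w_0,w_1,\dots,w_8)$ with the stated $\M(G)$- and $\M^2(G)$-structure, and the dichotomy in that lemma forces one of two configurations: all nine $w_i$ distinct (producing a $C_9$ on $\{w_0,\dots,w_8\}$ formed by the walk edges together with the closing edge $w_0 w_8$), or $w_0=w_7$ and $w_1=w_8$ (producing a $C_7$ on $\{w_0,\dots,w_6\}$ that closes up after seven walk steps).

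The central tool for showing that the resulting cycle is induced will be the following principle: by Lemma \ref{Disjointedness}(1), the three edge sets $E(G)$, $E(\M(G))$, $E(\M^2(G))$ are pairwise disjoint, so any $\M^2(G)$-edge $xy$ forces $d_G(x,y) \ge 3$. Applied to the known $\M^2(G)$-edges $w_0 w_4$ and $w_4 w_8$ coming from Lemma \ref{trail condition}, this immediately forbids any chord that would create a $G$-path of length $\le 2$ from $w_4$ to $w_0$ or from $w_4$ to $w_8$. This single observation dispatches a large portion of the candidate chords; in the $C_7$ case, for instance, it simultaneously rules out $w_0 w_3$, $w_0 w_5$, $w_1 w_3$, and $w_1 w_5$, since each creates a length-$2$ $G$-path from $w_0$ or $w_1$ to $w_4$.

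For the remaining chords I plan to generate additional $\M(G)$- and $\M^2(G)$-edges on the cycle by two complementary means. First, re-applying Lemma \ref{trail condition} to another edge $w_i w_{i+1}$ of the extracted cycle yields a second walk whose $\M^2$-diagonals excise further chords by the same distance argument. Second, Lemma \ref{Disjointedness}(2) rephrased via $\M^2(\M(G)) = \M^3(G) = G$ says that two consecutive $\M(G)$-edges $w_i w_{i+2}$ and $w_{i+2} w_{i+4}$ force $w_i w_{i+4} \notin E(G)$; iterating this propagation along the step-$2$ structure sitting inside $\M(G)$ excludes the step-$3$ chords of the $C_7$ (and, in the $C_9$ setting, the step-$4$ chords). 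A fully symmetric argument handles Case~(1) of Lemma \ref{trail condition}, producing an induced $C_9$.

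The main obstacle is the bookkeeping: each cycle has several residue classes of chords (step $2$ and step $3$ in the $C_7$ case, and additionally step $4$ in the $C_9$ case), and the argument must cover them all. A secondary subtlety is that an auxiliary walk obtained from a second application of Lemma \ref{trail condition} need not remain on the original cycle if a chord happens to be present; for this reason I would structure the argument as a proof by contradiction, assuming a chord, tracing out the forced walk, and showing that it produces an $\M^2(G)$- or $\M(G)$-edge that conflicts with the chord via Lemma \ref{Disjointedness}(1). Once the casework is complete, the extracted $C_7$ or $C_9$ is seen to have no chords in $G$, giving the desired induced copy.
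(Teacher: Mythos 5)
Your overall strategy is the paper's: take the walk $(w_0,\dots,w_8)$ from Lemma~\ref{trail condition}, split into the two configurations it allows, and exclude the chords of the resulting $C_7$ or $C_9$ using Lemma~\ref{Disjointedness}. Two of your mechanisms are exactly the ones the paper uses. Disjointness of $E(G)$, $E(\M(G))$, $E(\M^2(G))$ removes every chord already known to lie in $E(\M(G))$ or $E(\M^2(G))$; the $\M^2(G)$-edges $w_0w_4$ and $w_4w_8$ forbid any chord creating a length-$2$ $G$-path into $w_4$; and Lemma~\ref{Disjointedness}(2) applied to $\M(G)$ (two consecutive $\M(G)$-edges force a non-edge of $\M^2(\M(G))=\M^3(G)=G$) removes the remaining long chords. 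In the $C_7$ case these three tools together dispose of every chord except one.

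That one is $w_3w_5$ (and its analogues $w_0w_7$, $w_1w_8$ in the $C_9$ case): a pair at distance $2$ along the walk that is \emph{not} among the $\M(G)$-edges supplied by Lemma~\ref{trail condition}, that creates no length-$2$ $G$-path into $w_4$, and that is a step-$2$ rather than a step-$4$ chord, so none of your explicit mechanisms applies. Your fallback --- re-applying Lemma~\ref{trail condition} to another cycle edge --- does not close this gap, for precisely the reason you flag: the interior vertices of the new walk need not lie on the original cycle, so its $\M^2$-diagonals carry no information about the chord in question. The paper dispatches $w_3w_5$ with a short dedicated argument you should make explicit: if $w_3w_5\in E(G)$, then since $w_2w_5\notin E(G)$ has already been established, the $G$-path $(w_2,w_3,w_5)$ forces $w_2w_5\in E(\M(G))$; together with $w_4w_2\in E(\M(G))$, Lemma~\ref{Disjointedness} then gives $w_4w_5\notin E(G)$, contradicting the walk edge $w_4w_5$. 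Note that the contradiction lands on a \emph{walk edge}, not on the assumed chord itself, so your closing heuristic (``derive an edge that conflicts with the chord'') needs this adjustment. With that step added (and the corresponding checks for $w_0w_7$ and $w_1w_8$ in the $C_9$ case), your argument matches the paper's.
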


\begin{proof}
    Begin with the walk from Lemma \ref{trail condition}, $(w_0,\ldots,w_8)$. If $w_0 = w_7$ and $w_1 = w_8$, we will see that we get an induced $C_7$. When all $w_i$ are distinct, we will get an induced $C_9$. As the arguments are similar and overlap, we give details here only for the first case. 
    
    Suppose that $w_0 = w_7$, $w_1 = w_8$, and all other $w_i$ are distinct.    
    By Lemma \ref{Disjointedness}(1), we see that $w_5w_0$,\! $w_0w_2$,\! $w_2w_4$,\! $w_4w_6$,\! $w_6w_1$,\! $w_1w_3$,\! $w_1w_4$,\! $w_4w_0 \notin E(G)$. Since $w_6w_4,w_4w_2 \in E(\M(G))$, we have $w_6w_2 \notin E(G)$ from Lemma \ref{Disjointedness}(2). Similar arguments show $w_2w_5, w_3w_6 \notin E(G)$. Since $w_4w_0 \in E(\M^2(G))$, we have $w_0w_3 \notin E(G)$. Similarly, $w_5w_1 \notin E(G)$.
    
    Finally, suppose $w_3w_5 \in E(G)$. Then $w_2w_5 \in E(\M(G))$. Since $w_4w_2,w_2w_5 \in E(\M(G))$, we get $w_4w_5 \notin E(G)$. Contradiction.
\end{proof}

\begin{lemma} \label{lem: c7 c9 with M2 condition}
    Suppose $G$ is a connected graph with  metamour period~$3$. 
    Then $G$ contains an induced copy of $C_n$, $n\in\{7,9\}$, on vertices $w_i$, $0\leq i \leq n-1$, so that $w_i w_{i\pm 4} \in E(\M^2(G))$ for all $i$, with indices interpreted $\mod n$.
\end{lemma}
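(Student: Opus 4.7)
The starting point is the walk $(w_0,\dots,w_8)$ furnished by Lemma~\ref{trail condition} applied to a single edge of $G$, together with the induced $C_n$ (with $n\in\{7,9\}$) extracted from it by Lemma~\ref{Adjacent pairs}. Index the cycle vertices $w_0,\dots,w_{n-1}$ so that $w_iw_{i+1}\in E(G)$ for indices taken $\mod n$. Because the cycle is induced, every cyclic-distance-$2$ pair $w_iw_{i+2}$ has $d_G(w_i,w_{i+2})=2$ through $w_{i+1}$, giving $w_iw_{i+2}\in E(\M(G))$ for all $i$. From Lemma~\ref{trail condition} we already have two of the required $\M^2(G)$-edges, namely $w_0w_4$ and $w_4w_{8\bmod n}$ (which is $w_4w_1$ for $C_7$ and $w_4w_8$ for $C_9$).

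For any remaining index $i$, the $\M(G)$-path $w_i-w_{i+2}-w_{i+4}$ yields $d_{\M(G)}(w_i,w_{i+4})\le 2$, and the induced cycle forces $w_iw_{i+4}\notin E(G)$. By Lemma~\ref{Disjointedness}(1), the edge $w_iw_{i+4}$ lies in exactly one of $E(\M(G))$ and $E(\M^2(G))$, so it suffices to rule out $w_iw_{i+4}\in E(\M(G))$, equivalently, to show $w_i$ and $w_{i+4}$ admit no common $G$-neighbor. Assume for contradiction that some $u$ is such a common neighbor; an inspection of the cycle's $G$-neighborhoods (no cycle vertex is $G$-adjacent to two cycle vertices at cyclic distance $3$ in $C_7$ or at cyclic distance $4$ in $C_9$) forces $u\notin V(C_n)$.

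The contradiction is obtained in two steps. First, Lemma~\ref{Disjointedness}(3) is iterated with $G'=G$, using the cycle edges as the $v_1v_2,v_2v_3\in E(G)$ inputs and the cyclic-distance-$2$ $\M(G)$-edges (together with the seed $\M^2(G)$-edges from Lemma~\ref{trail condition}) as the $v_2'$-certificates; this propagation pins each $uw_k$ into one of the three pairwise disjoint sets $E(G),E(\M(G)),E(\M^2(G))$, with most landing in $E(G)\cup E(\M(G))$. Second, Lemma~\ref{trail condition} is applied to the edge $uw_i\in E(G)$ to produce a middle vertex $m$ with $um,mw_i\in E(\M^2(G))$, and Lemma~\ref{Adjacent pairs} simultaneously extracts a second induced copy of $C_{n'}$ through $u$ and $w_i$. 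The classification of $u$'s edges derived in the first step, combined with Lemma~\ref{Disjointedness}(1), excludes every candidate for $m$: any cycle vertex $m=w_k$ fails because $uw_k\in E(G)\cup E(\M(G))$ is disjoint from $E(\M^2(G))$, while any non-cycle $m$ forces a second induced cycle whose structure collides with the relations derived for $u$.

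The main obstacle is precisely this exclusion of candidate middle vertices: one must track, for each cyclic position $k$ and each of $C_7$, $C_9$, exactly which of the three disjoint edge sets contains $uw_k$, and then argue that no consistent choice of $m$ remains. The $C_7$ and $C_9$ cases require separate but parallel bookkeeping, since the seed $\M^2(G)$-edges sit at different cyclic distances ($3$ and $4$, respectively), but the overall template — propagation via Lemma~\ref{Disjointedness}(3) followed by a contradiction via Lemma~\ref{Disjointedness}(1) and a second invocation of Lemma~\ref{Adjacent pairs} — is uniform across both cases.
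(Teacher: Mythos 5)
Your setup matches the paper's: you take the induced $C_n$ from Lemma~\ref{Adjacent pairs}, note that every cyclic-distance-$2$ pair lies in $E(\M(G))$, and correctly reduce the problem to excluding $w_iw_{i+4}\in E(\M(G))$. But that exclusion --- which is the entire content of the lemma --- is never actually carried out. Your plan is to assume a common $G$-neighbor $u$ of $w_i$ and $w_{i+4}$ and derive a contradiction, but both steps of the plan are sketched in language that does not correspond to a concrete argument. In the first step you assert that iterating Lemma~\ref{Disjointedness}(3) ``pins each $uw_k$ into one of the three pairwise disjoint sets $E(G),E(\M(G)),E(\M^2(G))$.'' This is unjustified and in general false: a pair of vertices need not lie in any of the three sets (in $C_9$ itself the cyclic-distance-$3$ pairs lie in $E(\M^k(C_9))$ for no $k$), and Lemma~\ref{Disjointedness}(3) needs a witness $v_2'$ with $v_1v_2',v_2'v_3\in E(\M(G))$ that you never produce. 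In the second step, ``any non-cycle $m$ forces a second induced cycle whose structure collides with the relations derived for $u$'' asserts a contradiction rather than deriving one. Since you yourself flag this exclusion as ``the main obstacle,'' the proof is missing exactly where the work lies.

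The detour through an external vertex is also unnecessary. The paper never introduces the hypothetical common neighbor; it stays on the cycle and propagates the disjointness constraints of Lemma~\ref{Disjointedness}. Supposing $w_iw_{i+4}\in E(\M(G))$, chain with $w_{i+4}w_{i+6}\in E(\M(G))$ to force $w_iw_{i+6}\in E(\M(G))\cup E(\M^2(G))$; the branch $w_iw_{i+6}\in E(\M(G))$ chains once more to contradict a cycle edge of $G$, while the branch $w_iw_{i+6}\in E(\M^2(G))$ combines with an already-established $\M^2(G)$-edge (working outward from the two seeds $w_0w_4,w_4w_8$ supplied by Lemma~\ref{trail condition}) to contradict a known membership in $E(\M(G))$. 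Reworking your argument along these lines would close the gap without having to classify the neighborhoods of vertices off the cycle, which is really the job of Theorem~\ref{thm: classification period 3} and depends on this lemma.
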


\begin{proof}
    Continue the notation and arguments from Lemmas \ref{trail condition} and \ref{Adjacent pairs}.
    We only give details in the case where all vertices $w_i$ are distinct since the case of $w_0 = w_7$ and $w_1 = w_8$ is similar and straightforward.

    Note that $w_i w_{i\pm 2} \in E(\M(G))$ for all $i$. Thus, $w_0w_2,w_2w_4 \in E(\M(G))$, and either $w_0w_4 \in E(\M(G))$ or $w_0w_4 \in E(\M^2(G))$. Suppose $w_0w_4 \in E(\M(G))$. Then either $w_0w_6 \in E(\M(G))$, which implies $w_0w_8 \notin E(G)$, or $w_0w_6 \in E(\M^2(G))$, which together with $w_0w_4 \in E(\M^2(G))$ implies $w_4w_6 \notin E(\M(G))$. Contradiction.
    
    The others statements follow by similar arguments.
\end{proof}

We arrive at the main theorem of this section.

\begin{theorem} \label{thm: classification period 3}
    $G=(V,E)$ is a connected graph with metamour period $3$ if and only if it is isomorphic to either $C_7$ or $C_9$.
\end{theorem}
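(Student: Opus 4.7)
The ``if'' direction follows immediately from Theorem \ref{thm: odd C_n}: one computes $\mu(7)=\mu(9)=3$ since $2^3 \equiv 1 \pmod 7$ and $2^3 \equiv -1 \pmod 9$, while no other odd $n \geq 5$ satisfies $2^3 \equiv \pm 1 \pmod n$. Hence $C_7$ and $C_9$ each have metamour period exactly $3$.

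For the converse, let $G$ be a connected graph with metamour period $3$. Lemma \ref{lem: c7 c9 with M2 condition} provides an induced copy $C$ of $C_n$ for some $n \in \{7,9\}$, on vertices $w_0, \dots, w_{n-1}$, satisfying $w_i w_{i+1} \in E(G)$, $w_i w_{i \pm 2} \in E(\M(G))$, and $w_i w_{i \pm 4} \in E(\M^2(G))$ for every $i \pmod n$. It remains to prove $V(G) = V(C)$, from which $G \cong C_n$ follows.

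Suppose for contradiction that $v \in V(G) \setminus V(C)$; by connectedness of $G$, we may arrange that $v w_0 \in E(G)$. The plan is to pin down the \emph{profile} of $v$ with respect to $C$, i.e., for each $j \pmod n$ which (if any) of the sets $E(G)$, $E(\M(G))$, $E(\M^2(G))$ contains $v w_j$, and then to show that no admissible profile exists. Two constraints drive the analysis. First, Lemma \ref{Disjointedness}(1) forbids $v w_j$ from lying in more than one of these sets; applied via the length-$2$ path $w_0\,v\,w_j$, this shows that $v w_j \in E(G)$ is possible only for $j \in \{0, \pm 1, \pm 2\}$ when $n = 7$ and for $j \in \{0, \pm 1, \pm 2, \pm 3\}$ when $n = 9$, since any other index would place $w_0 w_j$ simultaneously in $E(\M(G))$ (via the new path through $v$) and in $E(\M^2(G))$ (by the cycle structure). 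Second, the periodicity $\M^3(G) = G$ requires $d_{\M^2(G)}(v, w_0) = 2$, so $v$ and $w_0$ must share a common $\M^2(G)$-neighbor. Analogous Disjointedness arguments further restrict the admissible $j$ for which $v w_j \in E(\M(G))$ or $v w_j \in E(\M^2(G))$.

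A systematic case analysis on these profiles, combined with direct computation of the iterated metamour neighborhoods of $v$, then yields a contradiction in every case. As a prototype, in the minimal situation where $v$'s only $G$-neighbor on $C$ is $w_0$, one finds that $v$'s $\M^2(G)$-neighbors in $V(C)$ are $\{w_3, w_{-3}\}$; one further iteration forces $v$'s $\M^3(G)$-neighbors to be $\{w_0, w_{\pm 1}\}$ when $n = 7$, or $\{w_{\pm 1}, w_{\pm 2}\}$ when $n = 9$---neither of which equals the required singleton $\{w_0\}$. I expect the main obstacle to be carrying out the remaining cases in full, especially for $n = 9$ where the pairs $w_i w_{i \pm 3}$ are not pre-classified by Lemma \ref{lem: c7 c9 with M2 condition} and thus allow for more configurations. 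In each such case, invoking Lemma \ref{Disjointedness}(2)--(3), the cyclic symmetry of $C$, and the periodicity $\M^3(G) = G$ should yield a mismatch between $v$'s computed $\M^3$-neighborhood and its actual $G$-neighborhood, completing the contradiction.
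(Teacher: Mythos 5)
Your overall strategy is the right one, and it matches the paper's in outline: the ``if'' direction via Theorem~\ref{thm: odd C_n} with $\mu(7)=\mu(9)=3$ is correct, and for the converse you correctly identify the key ingredients, namely the induced cycle with the $\M^2$-condition from Lemma~\ref{lem: c7 c9 with M2 condition}, the disjointness constraints of Lemma~\ref{Disjointedness}, and the periodicity requirement $d_{\M^2(G)}(v,w_0)=2$. However, there is a genuine gap: the entire substance of the converse is the elimination of a hypothetical extra vertex $v$, and you do not actually carry this out. You propose ``a systematic case analysis on these profiles'' and assert that it ``yields a contradiction in every case,'' but only one prototype case is sketched, and even that sketch is not sound as written: the claim that $v$'s $\M^2(G)$-neighbors on the cycle are exactly $\{w_3,w_{-3}\}$ presupposes knowledge of $\M(G)$ around $v$, which depends on $v$'s possible neighbors \emph{off} the cycle (other vertices of $V(G)\setminus V(C)$ could create distance-$2$ paths from $v$ to various $w_j$), and this is nowhere controlled. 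Your own closing remark that you ``expect the main obstacle to be carrying out the remaining cases in full'' concedes that the proof is incomplete precisely where the work lies.

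For comparison, the paper avoids enumerating profiles altogether and instead runs a short forced chain of deductions. Starting from $vw_0\in E$, it first rules out $vw_1\in E$ (since $vw_1\in E$ would cascade to $vw_2,vw_3,vw_4\in E$, contradicting $w_0w_4\in E(\M^2(G))$), so $vw_1\in E(\M(G))$ and likewise $vw_{n-1}\in E(\M(G))$. It then propagates along odd indices, using Lemma~\ref{lem: c7 c9 with M2 condition} to exclude the $\M^2$-alternative at each step, to get $vw_3,vw_5\in E(\M(G))$ (and $vw_4\in E(\M(G))$ by symmetry when $n=9$). The contradiction is then immediate: two $\M(G)$-edges from $v$ to consecutive cycle vertices force that pair to be nonadjacent in $G$, contradicting the cycle edge $w_5w_6$ (for $n=7$) or $w_4w_5$ (for $n=9$). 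If you want to complete your argument, replacing the open-ended profile enumeration with this kind of forced propagation is the missing step.
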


\begin{proof}
    Let $G$ be a connected graph with metamour period $3$. 
    Begin with the induced copy of $C_n$, $n\in\{7,9\}$, on $w_0,\ldots,w_{n-1}$ from Lemma~\ref{lem: c7 c9 with M2 condition}. It remains to show that $G$ has no additional vertices.
    By way of contradiction, suppose there exists $v\in V \setminus \{w_0,\dots,w_{n-1}\}$.
    After relabeling, suppose $vw_0 \in E$. 
    
    Then either $vw_1 \in E$ or $vw_1 \in E(\M(G))$. By way of contradiction, suppose $vw_1 \in E$. 
    As a result, either $v w_2 \in E$ or $v w_2 \in E(\M(G))$. In the later case, combining with $w_0 w_2 \in E(\M(G))$, we would get $v w_0 \notin E$. Contradiction. Therefore, $v w_2 \in E$.     
    We can similarly conclude $vw_3,vw_4 \in E$. As then $w_0v,vw_4 \in E$, it follows that $w_0 w_4 \notin E(\M^2(G))$. Contradiction. Thus, $vw_1 \in E(\M(G))$. Similarly, $vw_{n-1} \in E(\M(G))$.

    Since $vw_1, w_1w_3 \in E(\M(G))$, we get either $v w_3 \in E(\M(G))$ or $v w_3 \in E(\M^2(G))$.
    If $vw_3 \in E(\M^2(G))$, then $vw_3,w_3w_{n-1} \in E(\M^2(G))$ by Lemma~\ref{lem: c7 c9 with M2 condition}, and so $vw_{n-1} \notin E(\M(G))$. Contradiction. Thus $vw_3 \in E(\M(G))$, and either  $vw_5 \in E(\M(G))$ or $vw_5 \in E(\M^2(G))$. If $vw_5 \in E(\M^2(G))$, then $w_1w_5 \in E(\M^2(G))$ implies the contradiction $vw_1 \notin E(\M(G))$. Thus, $vw_5 \in E(\M(G))$.

    Case 1: $w_0 = w_7$ and $w_1 = w_8$ with $n=7$.
    
    
    With $vw_5, vw_{n-1} = vw_6 \in E(\M(G))$, we have the final contradiction $w_5w_6 \notin E$. 

    Case 2: The vertices are all distinct with $n=9$.
    
    With $vw_5 \in E(\M(G))$ we also have $vw_4 \in E(\M(G))$ by symmetry. Now $vw_4, vw_5 \in E(\M(G))$ implies $w_4w_5 \notin E$. Contradiction.
\end{proof}

The following result is now immediate.

\begin{corollary} \label{cor: classification period 3}
    A nontrivial graph has metamour period $3$ if and only if it is the disjoint union of some copies of $C_7$ and $C_9$.
\end{corollary}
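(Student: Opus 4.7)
The plan is to reduce the classification of disconnected graphs with metamour period $3$ to the connected case already settled in Theorem \ref{thm: classification period 3}. My starting observation is that the metamour operation respects vertex-disjoint unions: if $V(G) = V(G_1) \sqcup V(G_2)$ splits $G$ so that no edges run between the two parts, then $d_G(x,y) = \infty$ whenever $x,y$ lie in different parts, so no edges of $\M(G)$ cross the partition, and $\M(G) = \M(G_1) \sqcup \M(G_2)$. Iterating, $\M^k(G)$ splits the same way for every $k$. In particular, edges of $\M^k(G)$ always lie within connected components of $G$, so if $\pi_k$ denotes the vertex partition of $V(G)$ induced by the connected components of $\M^k(G)$, then $\pi_{k+1}$ refines $\pi_k$ for every $k \geq 0$.

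Next I would exploit the hypothesis $\M^3(G) = G$. This forces $\pi_3 = \pi_0$, and the refinement chain $\pi_0 \leq \pi_1 \leq \pi_2 \leq \pi_3 = \pi_0$ collapses, yielding $\pi_0 = \pi_1 = \pi_2 = \pi_3$. Consequently each connected component $H$ of $G$ is a connected component of $\M^k(G)$ for all $k \in \{0,1,2,3\}$, so $\M^3(H) = H$ and $H$ has metamour period dividing $3$. Since $3$ is prime, $H$ has metamour period $1$ or $3$. Period $1$ forces $H$ to be edgeless (by the first theorem of Section \ref{pseud1}), hence $H = K_1$ because $H$ is connected; period $3$ forces $H \cong C_7$ or $H \cong C_9$ by Theorem \ref{thm: classification period 3}.

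Under the standing reading that a nontrivial graph has no isolated vertices (or, equivalently, by absorbing any $K_1$ components into the classification), $G$ is a disjoint union of copies of $C_7$ and $C_9$. Conversely, for any such disjoint union, Theorem \ref{thm: odd C_n} applied componentwise gives $\M^3(G) = G$ since $\mu(7) = \mu(9) = 3$; the period is exactly $3$ rather than $1$ as soon as one $C_7$ or $C_9$ is present, because such components are not edgeless.

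The main obstacle is the partition-refinement step: in general the components of $\M(G)$ can be strictly finer than those of $G$ (e.g.\ $\M(P_3)$ is disconnected), so it is not a priori clear that each component of $G$ remains a single component throughout the orbit. The key insight is that a refinement chain of length $3$ that returns to its starting partition must have been constant; once this is observed, the corollary follows immediately from Theorem \ref{thm: classification period 3}.
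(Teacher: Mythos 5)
Your proof is correct and is essentially the argument the paper leaves implicit (the paper simply declares the corollary ``immediate'' from Theorem~\ref{thm: classification period 3}): since $\M$ commutes with disjoint unions, $\M^3(G)=G$ forces $\M^3(H)=H$ on each connected component $H$, so $H$ has metamour period $1$ or $3$ and is therefore $K_1$, $C_7$, or $C_9$, while the converse follows componentwise from $\mu(7)=\mu(9)=3$. Your caveat about isolated vertices is well taken --- $C_7 \cup K_1$ has metamour period $3$ yet is not literally a disjoint union of copies of $C_7$ and $C_9$, so the statement implicitly excludes (or absorbs) $K_1$ components, exactly as you observe.
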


In general, we conjecture that connected graphs with odd metamour period are rare. If true, it would be especially interesting to classify them. Note the conjectured difference to graphs with even metamour period, Theorem \ref{thm:induced subgraph}.

\begin{conjecture} \label{conj: odd periodic}
    For each odd $k\in \Z^+$, there exist only finitely many connected graphs with metamour period $k$.
\end{conjecture}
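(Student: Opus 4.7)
The plan is to generalize the chain of arguments behind Theorem~\ref{thm: classification period 3} (which handled $k=3$) and aim at the strong conclusion that every connected graph $G$ of odd metamour period $k$ is isomorphic to a cycle $C_n$ with $\mu(n)=k$. Once such a reduction is in place, the conjecture follows immediately: the defining relation $2^k\equiv\pm 1\pmod{n}$ forces $n\mid 2^k-1$ or $n\mid 2^k+1$, so there are at most $2(2^k+1)$ admissible values of $n$, and hence only finitely many connected graphs of period $k$ up to isomorphism.

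I would carry out this reduction in three stages, each lifting a step from the present section. First, generalize Lemma~\ref{Disjointedness}(1) to the pairwise disjointness of $E(\M^i(G))$ for $0\le i<k$. For $k=3$ this is essentially automatic, because $\M(H)\subseteq\overline{H}$ combined with the wraparound $\M^3(G)=G$ already makes every consecutive pair (in the cyclic order of edge sets) disjoint, and there are only three sets in total. For odd $k\ge 5$ the same wraparound only handles cyclically consecutive pairs, and one must rule out coincidences at larger cyclic distance by alternating-walk arguments that exploit the odd parity of $k$ in an essential way. Second, generalize the walk construction of Lemma~\ref{trail condition}: any edge $v_1v_2\in E(G)=E(\M^k(G))$ lifts successively through $\M^{k-1}(G), \M^{k-2}(G), \ldots, G$ to a walk of length $2^k$ in $G$, and the disjointness from the first stage should severely restrict the possible self-intersections of this walk, forcing it to close up into an induced $C_n$ whose length divides $2^k\pm 1$. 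Third, mirroring the closing argument of Theorem~\ref{thm: classification period 3}, show that $G$ contains no vertex outside this induced $C_n$: any hypothetical extra vertex $v$ joined to some $w_i$ would, through forced propagation around the cycle, give rise to an edge lying in $E(\M^a(G))\cap E(\M^b(G))$ for some $a\neq b$, contradicting the first stage.

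The main obstacle is the first stage. Pairwise disjointness was essentially a numerical coincidence for $k=3$ (only three sets, so cyclic adjacency sufficed), but for larger odd $k$ it becomes a substantial combinatorial problem, and without it both subsequent stages collapse, since they rely critically on each edge belonging to a unique layer $E(\M^i(G))$. The second stage compounds matters: the walk length $2^k$ grows exponentially in $k$, so the explicit vertex-by-vertex case analysis of Lemmas~\ref{trail condition}–\ref{lem: c7 c9 with M2 condition} does not scale, and a more structural tool — perhaps a spectral analysis of how $\M$ acts on the adjacency matrix, or a direct exploitation of the arithmetic of $2^i\pmod{n}$ underlying $\mu$ — is likely to be indispensable. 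Given these hurdles, I would not expect a proof to be short, and it is quite possible that entirely new techniques beyond those developed in this paper will be required.
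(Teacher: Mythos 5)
This statement is a \emph{conjecture}: the paper offers no proof of it, and explicitly presents the finiteness of connected graphs with odd metamour period as an open problem motivated by the $k=3$ classification. So there is no proof in the paper to compare yours against, and the real question is whether your proposal closes the gap. It does not, and you candidly say as much. Your entire argument is conditional on a reduction --- every connected graph of odd metamour period $k$ is a cycle $C_n$ with $\mu(n)=k$ --- which is itself a strictly stronger unproven assertion than the conjecture. The paper gives no evidence for this stronger claim beyond the case $k=3$, and nothing in your sketch rules out the existence of non-cycle connected graphs with metamour period $5$, $7$, etc. If such graphs exist, your whole architecture collapses even though the conjecture could still be true (finiteness does not require the examples to be cycles).

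Concretely, the load-bearing step you cannot supply is your ``first stage'': pairwise disjointness of $E(\M^i(G))$ for $0\le i<k$. As you correctly observe, for $k=3$ this is an artifact of there being only three edge sets, so that Lemma~\ref{lem: subset of G bar} applied to cyclically consecutive pairs already covers all pairs; for $k\ge 5$ the non-consecutive pairs are untouched by that lemma, and you offer no mechanism --- only the hope of ``alternating-walk arguments'' --- to handle them. Since your second and third stages explicitly presuppose that every edge lies in a unique layer, the proposal contains no complete argument for any odd $k\ge 5$. What you have written is a reasonable research program (and the closing arithmetic, that $\mu(n)=k$ forces $n\mid 2^k\pm 1$ and hence admits only finitely many $n$, is correct \emph{given} the reduction), but it is not a proof, and it should not be presented as one. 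The honest summary is that the conjecture remains open and your plan identifies, rather than resolves, the obstacles.
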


\section{Metamours of Generalized Petersen Graphs}

To help with digestion of Definition \ref{def:2walk} below, we will begin with a walk of $2^k + 1$ vertices and $2^k$ associated edges. For each $i$, $0 \leq i \leq k-1$, this walk will be broken up into $2^{k-i-1}$ smaller walks of $2^{i+1} + 1$ consecutive vertices and $2^{i+1}$ edges. For the $j$th such smaller walk, $0 \leq j \leq 2^{k-i-1}-1$, we will look at its first, middle, and last vertex. In particular, there will be $2^i$ edges each between the middle vertex and the vertices at either end of this small walk.
\begin{definition} \label{def:2walk}
    Fix $G=(E,V)$, $u,v \in V$ distinct vertices, and $k\in\Z^+$. A \emph{$2$-walk of length $k$} from $u$ to $v$ is a walk $\pi = (w_0,w_1,\dots,w_{2^k})$ in $G$ with $u=w_0$ and $v=w_{2^k}$ such that:    
    \begin{enumerate}
        \item For all $0 \leq i \leq k-1$ and $0 \leq j \leq 2^{k-i-1}-1$, 
            $w_{2j\cdot2^i}$, $w_{(2j+1)\cdot2^i}$, and $w_{2(j+1)\cdot2^i}$ are distinct.
        \item For all $0 \leq j \leq 2^{k-1}-1$, $w_{2j}w_{2(j+1)} \notin E$.
    \end{enumerate}
    Note that restriction of $\pi$ to $(w_{2j\cdot 2^i}, \ldots, w_{2(j+1)\cdot 2^i})$ gives a $2$-walk of length $i+1$ from $w_{j\cdot 2^{i+1}}=w_{2j\cdot 2^i}$ to $w_{(j+1)\cdot 2^{i+1}} =w_{2(j+1)\cdot 2^i}$.
    We will say that $\pi$ is \emph{fully minimal} if, for all $1 \leq i \leq k-1$ and $0 \leq j \leq 2^{k-i-1}-1$, there is no $2$-walk from $w_{j\cdot 2^{i+1}}$ to $w_{(j+1)\cdot 2^{i+1}}$ of length $i$. Again, restricting a fully minimal $2$-walk $\pi$ to $(w_{j\cdot 2^{i+1}}, \ldots, w_{(j+1)\cdot 2^{i+1}})$ gives a fully minimal $2$-walk of length $i+1$ from $w_{j\cdot 2^{i+1}}$ to $w_{(j+1)\cdot 2^{i+1}}$.
\end{definition}

\begin{remark} \label{rmk: min min is fully minimal}
    Continue the notation from Definition \ref{def:2walk}. We will say that 
    \[ d_2(u,v) = i \]
    if the minimal length of a $2$-walk between $u$ and $v$ is $i$. 
    
    With this notation, if a $2$-walk $\pi = (w_0,w_1,\dots,w_{2^k})$ of length $k$ 
    from $u$ to $v$ satisfies $d_2(w_{j\cdot 2^{i+1}}, w_{(j+1)\cdot 2^{i+1}})=i+1$ for all $1 \leq i \leq k-1$ and $0 \leq j \leq 2^{k-i-1}-1$, then $\pi$ will trivially be fully minimal. However, the converse of this statement is not true. Also, neither is it true that every minimal length $2$-walk is fully minimal nor that every fully minimal $2$-walk is of minimal length.
\end{remark}

In the case of $k=1$ in Definition \ref{def:2walk}, the existence of a (fully minimal) $2$-walk of length $k$ from $u$ to $v$ is equivalent to $uv\in E(\M^1(G))$. However, this is no longer true when $k\geq 2$. To that end, from Definition \ref{def: M}, we immediately get the following condition to have an edge in $\M^k(G)$.
\begin{lemma} \label{lemma:edge in Mk implies walk}
    Let $G=(V,E)$, $u,v \in V$ distinct vertices, and $k\in \Z^+$. Then $uv \in E(\M^k(G))$ if and only if there is a $2$-walk $(w_0,w_1,\dots,w_{2^k})$ of length $k$ from $u$ to $v$ in $G$ such that, for $0 \leq i \leq k$ and $0 \leq j \leq 2^{k-i} - 1$, 
    \[ w_{j\cdot 2^i}w_{(j+1)\cdot2^i} \in E(\M^{i}(G)).\]
\end{lemma}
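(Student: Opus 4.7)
The plan is to prove the lemma by induction on $k$, with the backward direction immediate and the forward direction the substantive one.

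The backward direction requires essentially no work: the edge condition at $(i,j) = (k,0)$ is $w_0 w_{2^k} \in E(\M^k(G))$, which is precisely the desired conclusion $uv \in E(\M^k(G))$.

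For the forward direction I would induct on $k \geq 1$. In the base case $k=1$, Definition~\ref{def: M} identifies $uv \in E(\M(G))$ with $d_G(u,v) = 2$, so some $w_1 \in V$ is adjacent in $G$ to both $u$ and $v$. The walk $(w_0, w_1, w_2) = (u, w_1, v)$ then satisfies the distinctness condition~(1), the non-adjacency condition~(2) of Definition~\ref{def:2walk} (since $uv \notin E$), and the edge conditions at levels $i = 0$ (by construction) and $i = 1$ (by hypothesis). In the inductive step, with $k \geq 2$ and the claim for $k-1$ in hand, I use $uv \in E(\M^k(G))$ to extract a midpoint $m \in V$ satisfying $um, mv \in E(\M^{k-1}(G))$; the three vertices $u, m, v$ are pairwise distinct. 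Applying the inductive hypothesis to the pairs $(u,m)$ and $(m,v)$ yields 2-walks of length $k-1$, say $(w_0, \ldots, w_{2^{k-1}})$ from $u$ to $m$ and $(w_{2^{k-1}}, \ldots, w_{2^k})$ from $m$ to $v$, each satisfying the corresponding edge conditions. I concatenate them into $(w_0, \ldots, w_{2^k})$ and verify that the result is the desired 2-walk of length $k$.

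The verification hinges on a simple index-bookkeeping observation: for any $0 \leq i \leq k-2$ and any integer $j$, no interval of the form $[j \cdot 2^{i+1}, (j+1) \cdot 2^{i+1}]$ straddles the midpoint $2^{k-1}$ in its interior, since that would require $j < 2^{k-i-2} < j+1$, which has no integer solution. Consequently, all triples in condition~(1) at levels $i \leq k-2$, all consecutive pairs in condition~(2), and all edges $w_{j \cdot 2^i} w_{(j+1) \cdot 2^i}$ for $i \leq k-1$ lie entirely within one of the two halves, and are therefore inherited from the sub-walks. The only new checks are the distinctness of $(u, m, v)$ at level $i = k-1, j = 0$, which follows from $u \neq v$ and $um, mv \in E(\M^{k-1}(G))$, and the edge condition at $(i,j) = (k,0)$, which is the hypothesis itself.

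No serious obstacle is anticipated; the proof is routine once the splitting-at-the-midpoint paradigm is established. The main subtlety is bookkeeping: one must track that every condition in Definition~\ref{def:2walk} and every edge condition of the lemma is either inherited from one of the two length-$(k-1)$ sub-walks or else handled at the top level by the midpoint data.
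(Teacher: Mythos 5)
Your proof is correct, and it is essentially the argument the paper has in mind: the paper states this lemma without proof as ``immediate'' from Definition~\ref{def: M}, and your induction on $k$ with the midpoint $m$ and the observation that no dyadic interval $[j\cdot 2^{i+1},(j+1)\cdot 2^{i+1}]$ with $i\le k-2$ straddles $2^{k-1}$ is exactly the careful unfolding of that recursion. All the checks you list (distinctness of $u,m,v$; inheritance of conditions (1) and (2) and of the edge conditions from the two length-$(k-1)$ sub-walks; the top-level conditions at $i=k-1$ and $i=k$) go through as you describe.
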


Lemma \ref{lemma:edge in Mk implies walk} gives a necessary and sufficient condition to have an edge in $\M^k(G)$, but it is very hard to verify. Theorem \ref{thm: rel min walk implies edge in Mk} below gives a sufficient condition that is easier to check if it is satisfied.
\begin{theorem} \label{thm: rel min walk implies edge in Mk}
    Let $G=(V,E)$ with distinct vertices $u,v \in V$. If there is a fully minimal $2$-walk of length $k$ from $u$ to $v$ in $G$, then $uv \in E(\M^k(G))$.
\end{theorem}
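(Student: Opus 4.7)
The natural plan is induction on $k$, exploiting the binary-tree structure of Definition~\ref{def:2walk}: a fully minimal 2-walk of length $k$ splits at its midpoint $m = w_{2^{k-1}}$ into a left half $(w_0, \ldots, w_{2^{k-1}})$ and a right half $(w_{2^{k-1}}, \ldots, w_{2^k})$, each of which is itself a fully minimal 2-walk of length $k-1$, as explicitly noted in the remark following Definition~\ref{def:2walk}.

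The base case $k=1$ is essentially trivial: a fully minimal 2-walk of length $1$ is just a walk $(u, w_1, v)$ with $uw_1, w_1v \in E$, the three vertices pairwise distinct, and $uv \notin E$ (the full-minimality condition is vacuous, since the range $1 \le i \le k-1$ is empty). Hence $d_G(u,v) = 2$, which gives $uv \in E(\M(G))$.

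For the inductive step, applying the inductive hypothesis to each half yields $um \in E(\M^{k-1}(G))$ and $mv \in E(\M^{k-1}(G))$. Since $u$, $m$, and $v$ are pairwise distinct by condition~(1) of Definition~\ref{def:2walk} at the top level, the length-$2$ path $u - m - v$ in $\M^{k-1}(G)$ shows $d_{\M^{k-1}(G)}(u, v) \le 2$. To conclude that this distance equals exactly $2$, and therefore that $uv \in E(\M^k(G))$, it remains to rule out $uv \in E(\M^{k-1}(G))$.

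This final step is the heart of the argument and the only place where full minimality (as opposed to mere minimality of total length) is used. If $uv$ were already an edge of $\M^{k-1}(G)$, then the ``only if'' direction of Lemma~\ref{lemma:edge in Mk implies walk} would supply a 2-walk of length $k-1$ from $u$ to $v$ in $G$, directly contradicting the $(i,j) = (k-1, 0)$ instance of full minimality. The main thing to verify carefully is that the weaker ``no 2-walk of length $k-1$'' hypothesis really does obstruct having $uv \in E(\M^{k-1}(G))$; but this is exactly what Lemma~\ref{lemma:edge in Mk implies walk} provides, so no additional work is needed.
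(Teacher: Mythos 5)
Your proof is correct and is essentially the paper's argument repackaged as an induction on $k$: the paper instead fixes the walk and, assuming $uv \notin E(\M^k(G))$, picks the smallest level $i$ at which an edge of $\M^{i+1}(G)$ fails to appear, but the key step---combining the two ``child'' edges in $\M^{i}(G)$ (resp.\ $\M^{k-1}(G)$) with Lemma~\ref{lemma:edge in Mk implies walk} and full minimality to rule out the endpoints being adjacent one metamour level too early---is identical. No gaps.
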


\begin{proof}
    Let $(w_0,w_1,\dots,w_{2^k})$ be a fully minimal $2$-walk of length $k$ from $u$ to $v$. As $w_{2j}w_{2j+1},w_{2j+1}w_{2(j+1)} \in E$ for $0 \leq j \leq 2^{k-1}-1$ and $w_{2j}w_{2(j+1)} \notin E$, we get $w_{2j}w_{2(j+1)} \in E(\M^1(G))$.

    If $uv\not\in E(\M^k(G))$, then, noting Lemma \ref{lemma:edge in Mk implies walk}, choose the smallest $i$, $1 \leq i \leq k-1$, such that there is some $j$, $0 \leq j \leq 2^{k-i-1} - 1$, 
    so that $w_{j\cdot 2^{i+1}}w_{(j+1)\cdot2^{i+1}} \notin E(\M^{i+1}(G))$. 
 
    However, minimality of $i$ gives
    $w_{2j\cdot 2^i}w_{(2j+1)\cdot2^i}, \, w_{(2j+1)\cdot2^i}w_{2(j+1)\cdot2^i} \in E(\M^i(G))$. Thus, we must have 
    $w_{j\cdot 2^{i+1}}w_{(j+1)\cdot2^{i+1}}=w_{2j\cdot 2^i} w_{2(j+1)\cdot2^i} \in E(\M^i(G))$ as $w_{j\cdot 2^{i+1}}w_{(j+1)\cdot2^{i+1}} \notin E(\M^{i+1}(G))$.   
    Lemma \ref{lemma:edge in Mk implies walk} now shows that there exists a $2$-walk of length $i$ from $w_{j\cdot 2^{i+1}}$ to $w_{(j+1)\cdot2^{i+1}}$, which is a contradiction.
\end{proof}

The next lemma will allow us to bootstrap up metamour orders by expanding $2$-walks.
\begin{lemma}  \label{Expansion of 2-walks}
    Let $uv \in E(\M^k(G))$ and $\pi =( w_0,w_1,\dots,w_{2^k})$ a $2$-walk from $u$ to $v$ in $G$ such that, for $0 \leq i \leq k$ and $0 \leq j \leq 2^{k-i} - 1$, $w_{j\cdot2^i}w_{(j+1)\cdot2^i} \in E(\M^{i}(G))$. 
    
    Suppose that, for $0 \leq a \leq 2^k - 1$, there is a fully minimal $2$-walk of length $2$ from $w_a$ to $w_{a+1}$. Then $uv \in E(\M^{k+2}(G))$ as well.
\end{lemma}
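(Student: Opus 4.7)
My plan is to prove the lemma by induction on $k$. The key idea is that the inductive split of $\pi$ reduces the problem to producing a single common neighbor of $u$ and $v$ in $\M^{k+1}(G)$, after which the edge-disjointness between consecutive iterates of $\M$ supplies the final jump to $\M^{k+2}(G)$.

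For the base case $k=1$, $\pi$ has the form $(u,w_1,v)$ with $uw_1,w_1v\in E$ and $uv\in E(\M(G))$. Applying Theorem \ref{thm: rel min walk implies edge in Mk} to each of the two fully minimal length-$2$ $2$-walks supplied by the hypothesis yields $uw_1,\,w_1v\in E(\M^2(G))$. Lemma \ref{lem: subset of G bar} applied to $\M(G)$ gives $\M^2(G)\subseteq\overline{\M(G)}$, so $uv\in E(\M(G))$ forces $uv\notin E(\M^2(G))$. Combined with the distinctness of $u,w_1,v$ (from condition (1) of $\pi$), this yields $d_{\M^2(G)}(u,v)=2$ and hence $uv\in E(\M^3(G))$, which is the $k=1$ case of the conclusion.

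For the inductive step $k\ge 2$, I split $\pi$ at its midpoint by setting $m=2^{k-1}$ and considering $\pi_L=(w_0,\ldots,w_m)$ and $\pi_R=(w_m,\ldots,w_{2^k})$. Each is a $2$-walk of length $k-1$ in $G$ with edge conditions inherited from $\pi$; in particular, the top-level conditions of $\pi$ at $i=k-1$ give $uw_m\in E(\M^{k-1}(G))$ and $w_mv\in E(\M^{k-1}(G))$. Every consecutive pair of $\pi_L$ or $\pi_R$ is already a consecutive pair of $\pi$, so it inherits a fully minimal length-$2$ $2$-walk from the hypothesis. Applying the inductive hypothesis separately to $\pi_L$ and to $\pi_R$ therefore yields $uw_m,\,w_mv\in E(\M^{k+1}(G))$. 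Condition (1) of $\pi$ at $i=k-1$ forces $w_m\notin\{u,v\}$, so $u\text{-}w_m\text{-}v$ is a genuine path of length $2$ in $\M^{k+1}(G)$. Finally, Lemma \ref{lem: subset of G bar} applied to $\M^k(G)$ gives $\M^{k+1}(G)\subseteq\overline{\M^k(G)}$; since $uv\in E(\M^k(G))$ by the $i=k$ edge condition of $\pi$, we conclude $uv\notin E(\M^{k+1}(G))$. Hence $d_{\M^{k+1}(G)}(u,v)=2$ and $uv\in E(\M^{k+2}(G))$.

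The main obstacle I would expect is the temptation to work directly with the long concatenated walk obtained by splicing each hypothesized length-$2$ $2$-walk into the corresponding edge of $\pi$. This concatenation is a $2$-walk of length $k+2$ in $G$ that satisfies the edge conditions of Lemma \ref{lemma:edge in Mk implies walk} at levels $i=0,1,2$, but its edge conditions at the higher levels are precisely what the lemma aims to prove, and the walk need not be fully minimal, so Theorem \ref{thm: rel min walk implies edge in Mk} cannot be applied wholesale. The induction circumvents this by trading a single simultaneous verification over all levels for one-step verifications, and the disjointness $E(\M^k(G))\cap E(\M^{k+1}(G))=\emptyset$ does the rest.
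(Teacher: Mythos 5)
Your proof is correct, but it takes a genuinely different route from the paper's. The paper works directly with the spliced walk that you dismiss in your final paragraph: it replaces each edge $w_aw_{a+1}$ of $\pi$ by its fully minimal $2$-walk of length $2$ to obtain a $2$-walk $\pi'$ of length $k+2$ with $w'_{4a}=w_a$, and then runs the same minimal-counterexample argument as in Theorem~\ref{thm: rel min walk implies edge in Mk}: assuming $uv\notin E(\M^{k+2}(G))$, it picks the smallest level $i$ at which some pair $w'_{j\cdot 2^{i+1}}w'_{(j+1)\cdot 2^{i+1}}$ fails to lie in $E(\M^{i+1}(G))$, notes that full minimality of the inserted walks forces $i\ge 2$, and derives a contradiction with the hypothesized membership $w_{j\cdot 2^{i-1}}w_{(j+1)\cdot 2^{i-1}}\in E(\M^{i-1}(G))$ coming from the original $\pi$ (so your concern that the theorem ``cannot be applied wholesale'' is met not by applying it wholesale but by redoing its argument with the extra information carried by $\pi$). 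Your induction on $k$ with a midpoint split replaces that simultaneous multi-level bookkeeping by a single observation per step: the two half-walks give $uw_m,w_mv\in E(\M^{k+1}(G))$, while Lemma~\ref{lem: subset of G bar} and $uv\in E(\M^k(G))$ exclude $uv$ from $E(\M^{k+1}(G))$, so $d_{\M^{k+1}(G)}(u,v)=2$. Your base case and the inheritance of all hypotheses by $\pi_L$ and $\pi_R$ (including $uw_m,w_mv\in E(\M^{k-1}(G))$ from the level-$(k-1)$ conditions, and the distinctness $w_m\notin\{u,v\}$) check out. What each approach buys: yours is more transparent and isolates exactly where each hypothesis is used; the paper's avoids setting up an induction and produces the explicit length-$(k+2)$ witness walk in one stroke, which keeps the argument parallel to the proof of Theorem~\ref{thm: rel min walk implies edge in Mk}.
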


\begin{proof}
    Construct a $2$-walk, $\pi' = (w_0',w_1',\dots,w_{2^{k+2}}')$, of length $k+2$ from $\pi$ by replacing each edge $w_a w_{a+1}$ in $\pi$ by its corresponding fully minimal $2$-walk of length $2$ from $w_a$ to $w_{a+1}$. Observe that $w'_{4a} = w_a$. By construction, note that, for $0 \leq j \leq 2^{k+1}-1$, we have $w'_{2j}w'_{2(j+1)} \in E(\M^1(G))$.
    
    Arguing as in Lemma \ref{thm: rel min walk implies edge in Mk}, suppose $uv \not\in E(\M^{k+2}(G))$. Using Lemma \ref{lemma:edge in Mk implies walk}, choose the smallest $i$, $1 \leq i \leq k+1$, such that there is some $j$, $0 \leq j \leq 2^{k-i-1} - 1$, so that $w_{j\cdot 2^{i+1}}' w_{(j+1)\cdot2^{i+1}}' \notin E(\M^{i+1}(G))$. By full minimality of the added $2$-walks, we see that $i\geq 2$.
    By minimality of $i$, $w'_{2j\cdot2^i}w'_{(2j+1)\cdot2^i},w'_{(2j+1)\cdot2^i}w'_{2(j+1)\cdot2^i} \in E(\M^i(G))$
    which forces $w_{j\cdot 2^{i+1}}' w_{(j+1)\cdot2^{i+1}}' \in E(\M^i(G))$. Thus 
    $w_{j\cdot2^{i-1}} \, w_{(j+1)\cdot2^{i-1}} \in E(\M^i(G))$, which violates its membership in $E(\M^{i-1}(G))$.
\end{proof}

    

Write $G(m,j)$ for the \emph{generalized Petersen graph} where $m,j\in\Z^+$ with $m\geq 5$ and $1\leq j < \frac{m}{2}$. We will use $\{v_i, u_i \mid 0\le i < m\}$ as vertex set with edges
\[ v_i v_{i+1}, \,\, v_i u_i, \text{ and } u_i u_{i+j} \text{ for all } 0\le i<m, \]
where indices are to be read modulo $m$. We may refer to the $\{v_i\}$ as the \emph{exterior vertices} and the $\{u_i\}$ as the \emph{interior vertices.}
Observe that the interior vertices break up into $(m,j)$ cycles of size $\frac{m}{(m,j)}$ each, where $(m,j)$ denotes the greatest common divisor of $m$ and $j$.

Our first main result, Theorem \ref{thm: Peteresen G(m,2) limit sets}, will calculate the metamour limit period and metamour limit set of $G(m,2)$.
With an eye towards applying Lemma \ref{Expansion of 2-walks} in the context of certain generalized Petersen graphs, we prove the following lemma.
\begin{lemma} \label{2-walk of length 2 in G(m,2)}
    Let $m\in\Z^+$ with $m\geq 5$. If $uv \in E(G(m,2))$, there exists a fully minimal $2$-walk of length $2$ from $u$ to $v$.
\end{lemma}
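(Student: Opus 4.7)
The plan rests on a simple observation: for any edge $uv \in E(G)$, every 2-walk of length 2 from $u$ to $v$ is automatically fully minimal. Indeed, the only shorter possibility would be a 2-walk of length~1, i.e., a triple $(u,x,v)$, but condition~(2) of Definition~\ref{def:2walk} requires $uv \notin E$ in that case, contradicting our hypothesis. So the task reduces to producing, for each edge of $G(m,2)$, a single walk $(u,w_1,w_2,w_3,v)$ satisfying the distinctness and non-edge clauses of Definition~\ref{def:2walk}.

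Next, I would split into the three natural cases according to edge type, and in each case exhibit an explicit walk that detours through the opposite ``side'' of the graph. For an exterior edge $v_iv_{i+1}$, take $(v_i,\,u_i,\,u_{i+2},\,v_{i+2},\,v_{i+1})$. For a spoke edge $v_iu_i$, take $(v_i,\,v_{i+1},\,v_{i+2},\,u_{i+2},\,u_i)$. For an interior edge $u_iu_{i+2}$, take $(u_i,\,v_i,\,v_{i+1},\,v_{i+2},\,u_{i+2})$. In each case the walk uses only standard spoke, exterior, and interior adjacencies of $G(m,2)$.

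The verification in each case reduces to three routine checks: that each consecutive pair $w_jw_{j+1}$ is an edge; that the triples $(w_0,w_1,w_2)$, $(w_2,w_3,w_4)$, and $(w_0,w_2,w_4)$ each consist of distinct vertices; and that neither $w_0w_2$ nor $w_2w_4$ lies in $E$. Since the edge set of $G(m,2)$ is exactly $\{v_iv_{i+1},\, v_iu_i,\, u_iu_{i+2}\}$, the potential edges that must be ruled out (such as $v_iu_{i+2}$, $u_iv_{i+1}$, or $v_iv_{i+2}$) are immediately non-edges.

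The one place where care -- rather than any genuine obstacle -- is required is at the small-$m$ boundary: the hypothesis $m \geq 5$ must be invoked to rule out accidental index collisions modulo $m$, for example to ensure $v_i \neq v_{i+2}$ and that $v_iv_{i+2}$ does not inadvertently coincide with the exterior edge $v_{i-1}v_i$ or $v_{i+2}v_{i+3}$. I would explicitly spot-check each of the three walks when $m=5$, where the interior cycle wraps around most tightly; beyond that, the proof is a direct bookkeeping argument.
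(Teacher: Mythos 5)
Your proof is correct and takes essentially the same route as the paper: the three explicit walks you exhibit are precisely the long way around the induced $5$-cycle $(v_i,v_{i+1},v_{i+2},u_{i+2},u_i)$ that the paper's proof uses, and your opening observation (that full minimality is automatic for a length-$2$ $2$-walk between adjacent vertices) correctly discharges the minimality condition.
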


\begin{proof}
    It will be sufficient to show that every edge of $G(m,2)$ lies in an induced subgraph isomorphic to $C_5$. For this, look at the cycle given by $(v_i,v_{i+1},v_{i+2},u_{i+2},u_i,v_i)$, see Figure \ref{C_5 in G(m,2)}.    
    \begin{figure}[h!]
        \begin{center}
            \begin{tikzpicture}
              \coordinate (u0) at (-1,0.5);
              \coordinate (u1) at (0,0.5);
              \coordinate (u2) at (1,0.5);
              \coordinate (u3) at (1,-0.5);
              \coordinate (u4) at (-1,-0.5);
              \draw (u0) -- (u1) -- (u2) -- (u3) -- (u4) -- cycle;
              \foreach \i in {0,...,4}
                {
                    \fill (u\i) circle (2pt);
                }
              \node[anchor=south] at (u0) {$v_i$};
              \node[anchor=south] at (u1) {$v_{i+1}$};
              \node[anchor=south] at (u2) {$v_{i+2}$};
              \node[anchor=north] at (u3) {$u_{i+2}$};
              \node[anchor=north] at (u4) {$u_i$};
            \end{tikzpicture}
            \end{center}
        \caption{$5$-cycle in $G(m,2)$}
        \label{C_5 in G(m,2)}
        \end{figure}
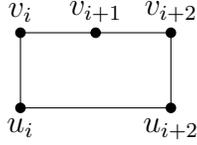
\end{proof}

The next theorem shows that metamour edges persist in $G(m,2)$ and are sorted only by parity.
\begin{theorem} \label{thm: G Metamour persistence and parity}
    Let $m,n,\ell,\ell_1,\ell_2\in\N$ with $m\geq 5$. Then 
    \[ E(\M^n(G(m,2))) \subseteq E(\M^{n+2\ell}(G(m,2)))\]
    and
    \[ E(\M^{\ell_1}(G(m,2))) \cap E(\M^{\ell_2}(G(m,2))) = \emptyset\]    
    if $\ell_1$ and $\ell_2$ have opposite parities.
\end{theorem}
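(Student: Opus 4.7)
The theorem will follow from combining previously established lemmas. The plan is to prove part (1) by a short induction on $\ell$, and then to derive part (2) as a formal consequence of part (1) together with the basic containment $\M(H) \subseteq \overline{H}$ from Lemma \ref{lem: subset of G bar}.

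For part (1), the base case $\ell = 0$ is trivial, and the inductive step reduces to establishing the single containment $E(\M^n(G(m,2))) \subseteq E(\M^{n+2}(G(m,2)))$ (iterating which gives the general $\ell$). Given $uv \in E(\M^n(G(m,2)))$, I would first invoke Lemma \ref{lemma:edge in Mk implies walk} to extract a 2-walk $(w_0, \ldots, w_{2^n})$ of length $n$ from $u$ to $v$ whose dyadic-block endpoint pairs $w_{j\cdot 2^i}w_{(j+1)\cdot 2^i}$ all lie in the appropriate metamour layer $E(\M^i(G(m,2)))$. Since each edge $w_a w_{a+1}$ of this 2-walk is an edge of $G(m,2)$, Lemma \ref{2-walk of length 2 in G(m,2)} supplies a fully minimal 2-walk of length 2 through the 5-cycle $(v_i, v_{i+1}, v_{i+2}, u_{i+2}, u_i)$ containing that edge. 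The hypotheses of Lemma \ref{Expansion of 2-walks} are now met, and it immediately yields $uv \in E(\M^{n+2}(G(m,2)))$.

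For part (2), without loss of generality assume $\ell_1 < \ell_2$. Since $\ell_1$ and $\ell_2$ have opposite parities, $\ell_2 + 1 - \ell_1$ is a positive even integer $2k$, so part (1) applied with $n = \ell_1$ gives the containment $E(\M^{\ell_1}(G(m,2))) \subseteq E(\M^{\ell_2+1}(G(m,2)))$. On the other hand, Lemma \ref{lem: subset of G bar} applied to the graph $\M^{\ell_2}(G(m,2))$ shows that $E(\M^{\ell_2+1}(G(m,2))) \cap E(\M^{\ell_2}(G(m,2))) = \emptyset$. Combining the two facts yields the desired disjointness. There is no real conceptual obstacle: Lemma \ref{2-walk of length 2 in G(m,2)} --- whose proof uses the basic 5-cycle structure of $G(m,2)$ --- does the heavy lifting. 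The only subtlety to watch is that Lemma \ref{Expansion of 2-walks} requires the replacement 2-walks of length 2 to be \emph{fully minimal}, not merely of length 2; the 5-cycle construction delivers this for free since adjacent vertices of $G(m,2)$ cannot themselves be joined by a shorter 2-walk.
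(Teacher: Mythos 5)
Your proposal is correct and follows essentially the same route as the paper: part (1) via Lemmas \ref{Expansion of 2-walks} and \ref{2-walk of length 2 in G(m,2)} (the paper leaves the induction and the invocation of Lemma \ref{lemma:edge in Mk implies walk} implicit, which you spell out), and part (2) by embedding the two edge sets into adjacent metamour powers, whose edge sets are disjoint by Lemma \ref{lem: subset of G bar}. The only detail worth noting is the degenerate case $n=0$ in part (1), where one should observe that $E(G(m,2)) \subseteq E(\M^2(G(m,2)))$ follows directly from Lemma \ref{2-walk of length 2 in G(m,2)} and Theorem \ref{thm: rel min walk implies edge in Mk}, since Lemma \ref{lemma:edge in Mk implies walk} is stated for $k\in\Z^+$; the paper glosses over this as well.
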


\begin{proof}    
    Lemmas \ref{Expansion of 2-walks} and \ref{2-walk of length 2 in G(m,2)} show that 
    \[ E(\M^n(G(m,2))) \subseteq E(\M^{n+2\ell}(G(m,2))).\]
    From this, we see that 
    $E(\M^{\ell_1}(G(m,2))) \cap E(\M^{\ell_2}(G(m,2)))$ is empty if $\ell_1$ and $\ell_2$ have opposite parities since the two sets can be embedded into adjacent metamour powers of $G(m,2)$.
\end{proof}

In order to calculate the metamour limit set for $G(m,2)$, we continue developing our notations from Definition \ref{def:2walk}. 
\begin{definition} \label{def: Petersen fully minimal edges}
    Let $k\in\Z^+$. Define the $\emph{fully minimal $k$-set}$, 
    \[ \FM_k(G(m,2)) \subseteq E(G(m,2)) \cup E(\overline{G(m,2)}), \]
    as the set of all edges $uv$, with distinct $u,v\in V(G(m,2))$, for which there exists a fully minimal $2$-walk of length $k$ from $u$ to $v$. 
    Let
    \[ \FM_{ev}(G(m,2)) = \bigcup_{k \text{ even}} \FM_k(G(m,2)), \]
    \[ \FM_{od}(G(m,2)) = \bigcup_{k\text{ odd}} \FM_k(G(m,2)). \]
    We say that $\FM_{ev}(G(m,2))$ \emph{stabilizes by} $N\in\Z^+$ if
    \[ \FM_{ev}(G(m,2)) = \bigcup\limits_{\substack{k \text{ even} \\ k\,\le\, N}} \FM_k(G(m,2)). \]
    Similarly, $\FM_{od}(G(m,2))$ \emph{stabilizes by} $N\in\Z^+$ if
    \[ \FM_{od}(G(m,2)) = \bigcup\limits_{\substack{k \text{ odd} \\ k\,\le\, N}} \FM_k(G(m,2)). \]
    Note that $N$ is not unique.
\end{definition}
By Theorem \ref{thm: rel min walk implies edge in Mk}, we see that
\[ \FM_k(G(m,2)) \subseteq \M^k(G(m,2)). \]

Next we see that the fully minimal $k$-sets eventually capture all possible edges $uv$, $u,v\in V(G(m,2))$. 
\begin{lemma} \label{lem: Petersen all fully minimal}
    Let $m\in\Z^+$ with $m\geq 5$ and distinct $u,v\in V(G(m,2))$. Then there exists $k\in\Z^+$ such that $uv\in \FM_k(G(m,2))$.
\end{lemma}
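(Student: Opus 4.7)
The plan is to handle the short-distance cases directly, then to set up an induction on $d_G(u, v)$ using a concatenation of fully minimal 2-walks. First, if $uv \in E(G(m,2))$, then Lemma \ref{2-walk of length 2 in G(m,2)} furnishes a fully minimal 2-walk of length $2$ from $u$ to $v$, so $uv \in \FM_2(G(m,2))$. If $d_G(u,v) = 2$, pick any length-2 path $(u,w,v)$ in $G(m,2)$; this is a 2-walk of length $1$, and the full-minimality condition for $k=1$ indexes $i$ in the empty range $1 \le i \le 0$, so it holds vacuously. Hence $uv \in \FM_1(G(m,2))$.

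For the remaining case $d_G(u,v) \ge 3$, I would induct on $d_G(u,v)$. Pick a vertex $w$ on a shortest $u$-$v$ path with $1 \le d_G(u,w), d_G(w,v) < d_G(u,v)$. By induction there are fully minimal 2-walks $\pi_1$ from $u$ to $w$ of some length $k_1$ and $\pi_2$ from $w$ to $v$ of some length $k_2$. The next step is to bring $\pi_1$ and $\pi_2$ to a common length $k$: using the $C_5$-expansion of Lemma \ref{2-walk of length 2 in G(m,2)} on each edge of the shorter 2-walk (which is precisely the mechanism behind Lemma \ref{Expansion of 2-walks}), I can double its length from $\ell$ to $\ell+2$; iterating this equalizes the two lengths after possibly adjusting one by an additional expansion. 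Concatenating the resulting 2-walks yields a candidate $\pi = \pi_1 \cdot \pi_2$ of length $k+1$ from $u$ to $v$, which I would verify is a genuine 2-walk by checking the distinctness and non-adjacency conditions at the even indices; these are inherited from $\pi_1, \pi_2$ at every scale strictly below the top.

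The only full-minimality condition not inherited is at the very top level $i = k$: we must verify that no 2-walk of length $k$ connects the top-scale checkpoint pair $(u, v)$. If such a shorter 2-walk exists, we replace $\pi$ by it and re-examine full minimality at one level lower; this replacement process must terminate because the length strictly decreases at each step and is bounded below by $1$. The terminal 2-walk is by construction fully minimal, and its length provides the desired $k$ with $uv \in \FM_k(G(m,2))$.

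The main obstacle is the verification, at each stage of the construction, that the top-scale condition can be enforced without disturbing the lower-scale full-minimality conditions, as well as ensuring that the $C_5$-expansions used to equalize the lengths of $\pi_1$ and $\pi_2$ are compatible with the even-index non-adjacency requirement at every dyadic scale. Both points rely on the specific structure of $G(m,2)$ (in particular, its $3$-regularity and the presence of the $C_5$-cycles used in Lemma \ref{2-walk of length 2 in G(m,2)}); careful indexing of the exterior $\{v_i\}$ and interior $\{u_i\}$ vertices should allow these verifications to go through uniformly in $m$.
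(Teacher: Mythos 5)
Your handling of the cases $d_{G(m,2)}(u,v)\le 2$ is fine, but the inductive step contains two genuine gaps, both located exactly where you flag ``the main obstacle,'' and neither is a routine verification. First, the length-equalization step: expanding a fully minimal $2$-walk of length $\ell$ by replacing each edge with a length-$2$ fully minimal $2$-walk is not known to preserve full minimality. After expansion every index is multiplied by $4$, so the checkpoint pair that originally had to admit no $2$-walk of length $i$ must now admit no $2$-walk of length $i+2$ --- a strictly stronger condition (since any $2$-walk of length $i$ expands to one of length $i+2$ via Lemma~\ref{2-walk of length 2 in G(m,2)}) that the original full minimality does not supply; in addition, brand-new low-level conditions appear at the inserted vertices. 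Lemma~\ref{Expansion of 2-walks} only propagates membership in $E(\M^{k+2}(G))$, not full minimality. Second, the top-level descent does not terminate at a fully minimal walk. When you swap $\pi$ for a shorter $2$-walk $\pi'$ from $u$ to $v$, all of $\pi'$'s interior checkpoints are new, so full minimality must be re-examined at every level, not ``one level lower''; and if a sub-walk of $\pi'$ of length $i+1$ admits a competing $2$-walk of length $i$, you cannot splice that competitor in, because the total edge count would no longer be a power of $2$. Tracking only the total length, your descent terminates at a minimal-length $2$-walk from $u$ to $v$, and Remark~\ref{rmk: min min is fully minimal} explicitly warns that minimal-length $2$-walks need not be fully minimal.

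The paper avoids both problems by being entirely explicit rather than inductive: for exterior vertices $u=v_a$, $v=v_b$ it writes down a single path (a run along exterior vertices, a jump to the interior cycle, and a return) whose parameters $x,y$ are chosen so that its length is exactly $2^\beta$ with $2^{\beta-1}<b-a\le 2^\beta$, and then verifies full minimality by direct distance counting in $G(m,2)$: any competing $2$-walk of length $i$ between two checkpoints would force $m<2^{i+2}$ or $m<2^{i+3}$, contradicting the choice of $\alpha$ with $2^{\alpha-1}<m\le 2^\alpha$. To salvage your scheme you would need a lemma that the $C_5$-expansion preserves full minimality in $G(m,2)$, together with a mechanism for producing a fully minimal representative from an arbitrary $2$-walk; neither is established in the paper, and the latter is doubtful in general.
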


\begin{proof} The special case $m=6$ can easily be verified by direct inspection.
    For $m\ne 6$, the proof examines, by symmetry, the three possible options for $u$ and $v$ to be either exterior or interior vertices. As the cases are similar, we give details only for the case where $u$ and $v$ are both exterior vertices. 

    First choose $\alpha\in\Z^+$, $\alpha\geq 3$, so that $2^{\alpha-1} < m \leq 2^\alpha$. After possibly relabeling, and also conflating $\Z_m$ with $\Z$ when convenient, we may assume $u=v_a$ and $v=v_b$ with $0\le a<b$ and $1\leq b-a \leq \frac{m}{2}$. For $b-a =1$, we have $uv \in \FM_2(G(m,2))$ with Lemma \ref{2-walk of length 2 in G(m,2)}, and for $b-a =2$, one easily checks $uv \in \FM_1(G(m,2))$. Thus, we can restrict ourselves to the case $3\leq b-a \leq \frac{m}{2}$. Choose $\beta\in\Z^+$, $\beta \geq 2$ so that $2^{\beta-1} < b-a \leq 2^\beta$. Note that $2^\beta < 2(b-a)\le m \le 2^\alpha$, thus $\beta\leq \alpha -1$. 

    We will look at walks $\pi$ from $v_a$ to $v_b$ of the form
    \[ (v_a, v_{a+1}, \ldots, v_{a+x},
        u_{a+x}, u_{a+x+2}, \ldots, u_{a+x+2y},
        v_{a+x+2y}, v_{a+x+2y-1}, \ldots, v_b) \]
    with $x,y\in\N$ and $a+x+2y \geq b$. Such a walk has length $x+1+y+ 1+ (a+x+2y-b) = 2x +3y +2 -(b-a)$. We will require that
    \[ 2x +3y +2 -(b-a) = 2^\beta. \]
    Note that it can be verified that $x+2y<m$ so that, in fact, $\pi$ is always a path.

    Write $c=2^\beta-2+ b-a\ge 2$. If $c\equiv 0 \mod 3$, use $x=0$ and $y=\frac{c}{3}$ and observe that this choice indeed satisfies $a+x+2y \geq b$. In this case,
    we have
    \[ \pi = (v_a,u_{a}, u_{a+2}, \ldots, u_{a+2y},
        v_{a+2y}, v_{a+2y-1}, \ldots, v_b), \]
    and we claim that $\pi$ is fully minimal. First, as $2^\beta < 2(b-a)\le m$, we have $c=2^\beta-2+ b-a < \frac 32 m$ and $2y= \frac 23 c< m$, and $\pi$ is indeed a path. Moreover, for $m\ne 6$, condition $(2)$ of Definition \ref{def:2walk} is automatically satisfied, too, and $\pi$ is a $2$-walk of length $\beta$. If we relabel $\pi$ as $(w_0,w_1,\dots,w_{2^\beta})$, it now suffices to show that there is no 
    $2$-walk from $w_{j\cdot 2^{i+1}}$ to $w_{(j+1)\cdot 2^{i+1}}$ of length $i$
    for all $1 \leq i \leq \beta-1$ and $0 \leq j \leq 2^{\beta-i-1}-1$. 

    The argument breaks into four cases depending on the location of $w_{j\cdot 2^{i+1}}$ and $w_{(j+1)\cdot 2^{i+1}}$ with respect to exterior and interior vertices. As the arguments are similar, we only give details here for two representative cases.
    
    For the first case considered here, suppose that $w_{j\cdot 2^{i+1}}$ and $w_{(j+1)\cdot 2^{i+1}}$ are both exterior vertices, neither equal to $w_0$. 
    The exterior path between these two vertices has $2^{i+1}$ edges with $2^{i+1} \leq 2y-(b-a)$. As $y=\frac{1}{3}(2^\beta-2+(b-a))$, it follows that 
    $2^{i+1} \leq \frac{2}{3}(2^\beta-2-\frac{1}{2}(b-a)) < 2^\beta$, and $i \leq \beta -2 \leq \alpha -3$.
    
    However, the shortest possible path between $w_{j\cdot 2^{i+1}}$ and $w_{(j+1)\cdot 2^{i+1}}$ would have at least either $2^i+2$ edges going along interior vertices or $\frac{m-2^{i+1}}{2}+2$ edges by going in the opposite direction. The first possibility is too large to admit a $2$-walk of length $2^i$. Turning to the second possibility, the existence of a $2$-walk of length $2^i$ would require $\frac{m-2^{i+1}}{2}+2 \leq 2^i$ so that $m \leq 2^{i+2}-4$. Thus $m < 2^{i+2}$ and $\alpha \leq i+2$, which is a contradiction. 

    For the second case considered here, suppose $w_{j\cdot 2^{i+1}}$ and $w_{(j+1)\cdot 2^{i+1}}$ are both interior vertices. 
    Then $j\geq 1$ and $2^{i+2}\leq (j+1)\cdot 2^{i+1} \leq y+1 =  \frac{1}{3}(2^\beta-2+(b-a))+1 \leq \frac{1}{3}(2^{\beta+1}+1) < 2^\beta$ so that $i\leq \beta -3 \leq \alpha -4$. However, the shortest possible path between $w_{j\cdot 2^{i+1}}$ and $w_{(j+1)\cdot 2^{i+1}}$ has either $2^{i+1}$ or $\frac{m}{2}-2^{i+1}$ edges. The first possibility is too large to allow a $2$-walk of length $2^i$. For the second possibility to work, we would need $\frac{m}{2}-2^{i+1}\leq 2^i$ so that $m\leq 2^{i+2}+2^{i+1}<2^{i+3}$. Then $\alpha \leq i+3$, which is a contradiction.
    
    Finally, the cases of $c\equiv 1\mod 3$ and $c\equiv 2\mod 3$ are done using $x=2$ and $x=1$, respectively. The details are similar and omitted.
\end{proof}

Finally, we can calculate the metamour limit period and metamour limit set of $G(m,2)$.
\begin{theorem} \label{thm: Peteresen G(m,2) limit sets}
    Let $m\in\Z^+$ with $m\geq 5$. Then $G(m,2)$ has metamour limit period $2$. 
    
    The metamour limit set consists of $(V(G(m,2)),\FM_{ev}(G(m,2)))$ and $(V(G(m,2)),\FM_{od}(G(m,2)))$, where $\FM_{ev}(G(m,2))$ and $\FM_{od}(G(m,2))$ stabilize by $2 \lfloor m/2 \rfloor +m-8$. Moreover,  
    \[\FM_{ev}(G(m,2)) = E(\M^{2\ell}(G(m,2)))\] and \[\FM_{od}(G(m,2)) = E(\M^{2\ell+1}(G(m,2)))\]
    for all sufficiently large $\ell\in\N$. Finally, 
    \[ \FM_{ev}(G(m,2)) \cup \FM_{od}(G(m,2)) = E(G(m,2)) \cup E(\overline{G(m,2)}). \]
\end{theorem}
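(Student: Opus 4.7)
The plan is to combine persistence and parity disjointness of metamour edges (Theorem \ref{thm: G Metamour persistence and parity}) with the existence of a fully minimal $2$-walk between every pair of distinct vertices (Lemma \ref{lem: Petersen all fully minimal}). By Theorem \ref{thm: G Metamour persistence and parity}, the sequence $E(\M^{2\ell}(G(m,2)))$ is monotonically increasing in $\ell$ and bounded inside the finite set $E(G(m,2))\cup E(\overline{G(m,2)})$, so it stabilizes at some even-index limit edge set; likewise for the odd-index sequence. Parity disjointness forces these two limits to share no edges, and since $E(G(m,2)) \neq \emptyset$ sits inside every even-index term, the two limits are distinct and the metamour limit period is exactly $2$.

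I would then identify the two limit graphs with $(V,\FM_{ev}(G(m,2)))$ and $(V,\FM_{od}(G(m,2)))$. For containment in one direction, every $uv\in\FM_k$ satisfies $uv \in E(\M^k(G(m,2)))$ by Theorem \ref{thm: rel min walk implies edge in Mk}, and this propagates by persistence into all $\M^{k+2s}(G(m,2))$; hence $\FM_{ev}$ sits inside the even-index limit and $\FM_{od}$ sits inside the odd-index one. For the reverse containment, any pair of distinct vertices belongs to some $\FM_{k'}$ by Lemma \ref{lem: Petersen all fully minimal}, and parity disjointness fixes $k'$ to match the parity of the stage at which $uv$ actually becomes a metamour edge. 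Combined, this yields equality of each limit with its corresponding $\FM$ set, the disjointness $\FM_{ev}\cap\FM_{od}=\emptyset$, and the partition identity $\FM_{ev}\cup\FM_{od}=E(G(m,2))\cup E(\overline{G(m,2)})$.

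The main remaining obstacle is the explicit stabilization bound $N = 2\lfloor m/2\rfloor + m - 8$, which requires a quantitative audit of the walks constructed in the proof of Lemma \ref{lem: Petersen all fully minimal}. Splitting into the three cases based on whether each endpoint is exterior or interior, one must verify that the fully minimal $2$-walk of length $2^\beta$ produced there never exceeds $N$. The exterior portion of the walk contributes at most about $m$ edges, while the interior cycle traversal (whose cycle size is $m/2$ when $m$ is even and $m$ when $m$ is odd) contributes at most $2\lfloor m/2\rfloor$ in the mixed and interior-interior cases, which accounts for the additive shape of $N$. The excluded case $m=6$ and other small values should be checked by direct inspection, and Lemma \ref{Expansion of 2-walks} together with Lemma \ref{2-walk of length 2 in G(m,2)} can be invoked as needed to promote any $\FM_k$-membership to $\FM_{k+2s}$, ensuring the desired parity-matched stabilization occurs by stage $N$.
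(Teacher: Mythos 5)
Your first two paragraphs reproduce the paper's argument for the main claims essentially verbatim: persistence and parity disjointness (Theorem \ref{thm: G Metamour persistence and parity}), the inclusion $\FM_k(G(m,2)) \subseteq E(\M^k(G(m,2)))$ from Theorem \ref{thm: rel min walk implies edge in Mk}, and the coverage of all vertex pairs from Lemma \ref{lem: Petersen all fully minimal} together give monotone stabilization within each parity, disjoint and distinct limits (hence limit period exactly $2$), the identifications $\FM_{ev} = E(\M^{2\ell})$ and $\FM_{od} = E(\M^{2\ell+1})$ for large $\ell$, and the partition identity. That portion is correct and is the same route the paper takes.

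The divergence, and the genuine gap, is the stabilization bound. The paper does not obtain $N = 2\lfloor m/2\rfloor + m - 8$ by auditing the walks of Lemma \ref{lem: Petersen all fully minimal}; it counts orbits of vertex pairs under the index shift $i \mapsto i+1$: there are $2\lfloor m/2\rfloor + m$ such orbits, each $E(\M^k(G(m,2)))$ is a union of orbits, $E(G(m,2))$ and $E(\M(G(m,2)))$ occupy $3$ and $6$ of them, and by Theorem \ref{thm: G Metamour persistence and parity} every non-stable step adds at least one new orbit, so pigeonhole forces stabilization within the stated bound. Your proposed audit conflates two different quantities: the subscript $k$ of $\FM_k$ is the \emph{length} of a $2$-walk in the sense of Definition \ref{def:2walk}, meaning the walk has $2^k$ edges, whereas the edge counts you add up (``the exterior portion contributes at most about $m$ edges \ldots the interior cycle traversal contributes at most $2\lfloor m/2\rfloor$'') bound $2^k$, not $k$. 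Consequently your computation does not explain the additive shape of $N$; carried out correctly it would bound the relevant $k$ by roughly $\lceil \log_2 m\rceil$, and it would also require completing the interior--interior and mixed cases that Lemma \ref{lem: Petersen all fully minimal} only sketches. You should either adopt the orbit-counting pigeonhole argument or state precisely which quantity your walk-length analysis is bounding before claiming it yields $2\lfloor m/2\rfloor + m - 8$.
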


\begin{proof}
    By Theorem \ref{thm: rel min walk implies edge in Mk}, we see that $\FM_k(G(m,2)) \subseteq E(\M^k(G(m,2)))$. By Theorem \ref{thm: G Metamour persistence and parity} and the fact that $G(m,2)$ is finite, we see that   
    \[ \FM_{ev}(G(m,2)) \subseteq E(\M^{2\ell}(G(m,2)))\] and
       \[\FM_{od}(G(m,2)) \subseteq E(\M^{2\ell+1}(G(m,2))) \]
    for all sufficiently large $\ell\in\N$.    
    From Theorem \ref{thm: G Metamour persistence and parity} and Lemma \ref{lem: Petersen all fully minimal}, we see that
    \[ E(\M^{2\ell}(G(m,2))) \cap E(\M^{2\ell+1}(G(m,2))) = \emptyset\] 
    and
    \[ \FM_{ev}(G(m,2)) \cup \FM_{od}(G(m,2)) = E(G(m,2)) \cup E(\overline{G(m,2)})\]
    so that, in fact, 
    \[ \FM_{ev}(G(m,2)) = E(\M^{2\ell}(G(m,2)))\] and
       \[\FM_{od}(G(m,2)) = E(\M^{2\ell+1}(G(m,2))) \]
    for all sufficiently large $\ell\in\N$.
    
    For the statement on stability, first observe that $G(m,2) \cup \overline{G(m,2)}$ has $\binom{2m}{2} = {m(2m-1)}$ edges, which group up into $2 \lfloor m/2 \rfloor +m$ equivalence classes with respect to index shift modulo $m$. For $m\ne 5$, $E(G(m,2))$ and $E(\M(G(m,2)))$ consist of $3$ and $6$ of these classes, respectively. Moreover, by Theorem \ref{thm: G Metamour persistence and parity}, for growing even and odd $k$, respectively, $E(\M^k(G(m,2)))$ either stays the same or grows by adding one or several of these equivalence classes. The metamour iterates will stabilize if $E(\M^k(G(m,2)))=E(\M^{k+2}(G(m,2)))$, which will happen for some $k\ge 2 \lfloor m/2 \rfloor +m-8$.
\end{proof}

We end with a characterization for the connectedness of $\M(G(m,j))$. From Definition \ref{def: M}, observe that $\M(G)$ is connected if and only if for each distinct $u,v\in V(G)$, there exists $n\in\Z^+$ and a walk in $G$, $(w_0,w_1,\ldots,w_{2n})$, with $w_{2i},w_{2i+1},w_{2(i+1)}$ distinct  and $w_{2i}w_{2(i+1)}\not\in E(V)$ for all $0\leq i \leq n-1$.

\begin{theorem} \label{thm: G(m,j) connected}
    Let $m,j \in \Z^+$ with $m \geq 5$ and $j < \frac{m}{2}$. Then $\M(G(m,j))$ is connected if and only if either
    \begin{enumerate}
        \item $m$ is odd or
        \item $m$ and $j$ are even.
    \end{enumerate}
    Otherwise, $\M(G(m,j))$ has two connected components.
\end{theorem}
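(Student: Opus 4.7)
The plan is to enumerate the edges of $\M(G(m,j))$ and then apply a two-coloring argument. Using the $G$-neighborhoods $N(v_i)=\{v_{i-1},v_{i+1},u_i\}$ and $N(u_i)=\{v_i,u_{i-j},u_{i+j}\}$, a routine case check shows that the edge set of $\M(G(m,j))$ consists exactly of $v_iv_{i+2}$, $v_iu_{i\pm 1}$, and $v_iu_{i\pm j}$ for every $i$, together with $u_iu_{i\pm 2j}$ in the case $3j\not\equiv 0\pmod{m}$.

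Supposing temporarily that $m$ is even, I would define the coloring $c\colon V(G(m,j))\to\{0,1\}$ by $c(v_i)=i\bmod 2$ and $c(u_i)=(i+1)\bmod 2$; this is well-defined precisely because $m$ is even. Checking each edge type, $v_iv_{i+2}$, $v_iu_{i\pm 1}$, and $u_iu_{i\pm 2j}$ always preserve $c$ (the shifts $\pm 2$, $\pm 1+1$, $\pm 2j$ are all even), while $v_iu_{i\pm j}$ preserves $c$ if and only if $j$ is odd. Hence if $m$ is even and $j$ is odd, every $\M$-edge preserves $c$, giving at least two components. Each color class is actually a single component: the $v_iv_{i+2}$-edges on same-parity $v$-vertices form a cycle of length $m/2\ge 3$, and each $u_i$ of the opposite index-parity is linked to the same-color vertex $v_{i-1}$ via the edge $v_{i-1}u_i$.

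For the two connected cases I would argue directly. When $m$ is odd, $\gcd(m,2)=1$, so the edges $\{v_iv_{i+2}\mid 0\le i<m\}$ form a single $m$-cycle in $\M(G(m,j))$ through all $v$-vertices; every $u_i$ is then joined to this cycle by $v_{i-1}u_i$. When $m$ and $j$ are both even, the argument of the previous paragraph still shows that each $c$-color class is internally connected through $v_iv_{i+2}$ and $v_iu_{i\pm 1}$; but now $j$ is even, so $v_iu_{i\pm j}$ is a color-\emph{switching} edge, bridging the two color classes, and hence $\M(G(m,j))$ is connected.

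I expect the main delicate step to be the edge enumeration itself---confirming that each listed candidate really is at distance exactly two rather than distance one in $G(m,j)$. The single genuine exception is that $u_iu_{i\pm 2j}$ collapses into a $G$-edge precisely when $3j\equiv 0\pmod{m}$, but the subsequent parity argument never relies on those edges, so this does not affect the conclusion; small base cases such as $m\in\{5,6\}$ are likewise handled automatically by the relevant parity case.
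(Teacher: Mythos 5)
Your proof is correct and takes essentially the same approach as the paper: the same case split, the same parity $2$-coloring (your $c$ is exactly the paper's bipartition $\{v_i,u_{i+1}\mid i\in 2\Z_m\}$ versus $\{v_i,u_{i+1}\mid i\in 2\Z_m+1\}$) to force two components when $m$ is even and $j$ is odd, and the same exterior-cycle-plus-spokes argument for connectivity in the other cases. The only cosmetic difference is that you verify the invariant by explicitly enumerating the edges of $\M(G(m,j))$, whereas the paper just observes that this coloring is a proper $2$-coloring of $G(m,j)$ itself, so distance-$2$ pairs must lie in the same class.
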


\begin{proof}
    Let $u,v\in G(m,j)$ be distinct.   
    Consider first the case of odd $m$.

    If both $u,v$ are exterior vertices, then moving either in a clockwise or counterclockwise manner around the exterior vertices will furnish a path of even length showing that $u$ and $v$ are connected in $\M(G(m,j))$. It remains to show that each interior vertex is connected to an exterior vertex in $\M(G(m,j))$. For this, use the path $(u_i,u_{i+j},v_{i+j})$ for $i\in\Z_m$.

    Now consider the case of $m,j$ even. Following an argument similar to the one in the previous paragraph, it is immediate that the sets of vertices $\{v_i, u_i \,|\, i\in 2\Z_m\}$ and $\{v_i, u_i \,|\, i\in 2\Z_m+1\}$ are both connected in $\M(G(m,j))$. The path $(v_1,v_2,u_{2})$ finishes this case.

    Finally, consider the case of $m$ even and $j$ odd. Following again an argument similar to the one in the first paragraph, it is immediate that the sets of vertices $\{v_i, u_{i+1} \,|\, i\in 2\Z_m\}$ and $\{v_i, u_{i+1} \,|\, i\in 2\Z_m+1\}$ are both connected in $\M(G(m,j))$. However, as these two sets of vertices provide a $2$-coloring of $G(m,j)$, $\M(G(m,j))$ cannot be connected.
\end{proof}

\section{Metamour Graphs of Complete $m$-ary Trees}

In this section, we give a full description of the periodic behavior and limit set of the complete $m$-ary tree under the metamour operation.
From now on, for $h,m \in \Z^+$ with $m\ge 2$, we let $T \coloneqq T(h,m)$ denote the \emph{complete $m$-ary tree} with height $h$, where the \emph{height} is the number of levels below the root vertex of $T$.
Recall that the \emph{depth} of a vertex is its distance from the root.
It turns out that the central role in our analysis is played by $\M^2(T)$.
We thus begin with some auxiliary lemmas relating to this graph.

\begin{lemma}
    \label{lemma:M2 is d4}
    Let $x,y \in V(T)$.
    Then $xy \in E(\M^2(T))$ if and only if $d_T(x,y) = 4$.
\end{lemma}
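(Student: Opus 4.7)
The plan is to unwind the definition of $\M^2(T)$ and then exploit two simple structural features of $T$: every tree is bipartite, and any two vertices in a tree are joined by a unique geodesic. Unwinding Definition~\ref{def: M}, the condition $xy \in E(\M^2(T))$ is equivalent to saying that $x \ne y$, $xy \notin E(\M(T))$ (so $d_T(x,y) \ne 2$), and there exists some vertex $z$ with $xz, zy \in E(\M(T))$, i.e., with $d_T(x,z) = d_T(z,y) = 2$.

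For the forward direction, I would assume $xy \in E(\M^2(T))$ and pick such a $z$. Because $T$ is bipartite, the vertices $x,y,z$ must satisfy a parity constraint: $x$ and $y$ lie in the same part of the bipartition as each other (both opposite to $z$), so $d_T(x,y)$ is even. The triangle inequality gives $d_T(x,y) \le d_T(x,z) + d_T(z,y) = 4$. Combining these with $x \ne y$ and $d_T(x,y) \ne 2$, the only remaining possibility is $d_T(x,y) = 4$.

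For the reverse direction, I would assume $d_T(x,y) = 4$ and exhibit a witness $z$ directly. Let $x = a_0, a_1, a_2, a_3, a_4 = y$ be the unique path from $x$ to $y$ in $T$, and set $z \coloneqq a_2$. Then $d_T(x,z) = d_T(z,y) = 2$, so $xz, zy \in E(\M(T))$. Moreover, $d_T(x,y) = 4 \ne 2$, hence $xy \notin E(\M(T))$. This produces a path of length $2$ between distinct, non-adjacent vertices in $\M(T)$, so $xy \in E(\M^2(T))$.

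I do not expect any real obstacle here; the only subtlety worth underlining is the bipartiteness argument, which rules out the intermediate possibilities $d_T(x,y) \in \{1, 3\}$ in the forward direction and makes the chain of deductions completely forced.
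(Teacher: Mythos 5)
Your proof is correct, and it takes a genuinely different (and more elementary) route than the paper. The paper deliberately phrases both directions in terms of its $2$-walk machinery: for the forward direction it invokes Lemma~\ref{lemma:edge in Mk implies walk} to produce a $2$-walk $(x,u,v,w,y)$, uses acyclicity to show all five vertices are distinct so that the walk is a path, and then uses uniqueness of paths in a tree to conclude $d_T(x,y)=4$; for the converse it observes that the midpoint of the geodesic yields a \emph{fully minimal} $2$-walk of length $2$ and appeals to Theorem~\ref{thm: rel min walk implies edge in Mk}. You instead unwind $\M^2$ directly and replace the explicit path construction in the forward direction with a parity-plus-triangle-inequality argument: bipartiteness forces $d_T(x,y)$ to be even, the triangle inequality caps it at $4$, and the exclusions $d_T(x,y)\neq 0,2$ leave only $4$. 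This is shorter and avoids having to verify that the intermediate vertices of the walk are distinct; what the paper's phrasing buys is consistency with the $2$-walk framework that it reuses throughout Sections~8 and~9 (e.g., in Lemma~\ref{lemma:diam M2T} and Lemma~\ref{theorem: m-ary trees}). One small slip: your parenthetical ``both opposite to $z$'' is backwards --- since $d_T(x,z)=d_T(z,y)=2$ are even, $x$ and $y$ lie in the \emph{same} part as $z$; but your operative claim, that $x$ and $y$ lie in the same part as each other and hence $d_T(x,y)$ is even, is exactly right, so the argument stands.
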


\begin{proof}
    
    Suppose that $xy \in E(\M^2(T))$.
    Then by Lemma~\ref{lemma:edge in Mk implies walk}, there exists a $2$-walk $\pi = (x, u, v, w, y)$ in $T$, in the sense of Definition~\ref{def:2walk}; in particular, the vertices $x$, $v$ and $y$ are all distinct.
    Since $T$ contains no cycles, it follows that $xy \notin E(T)$ and therefore $u \neq y$ and $x \neq w$.
    Moreover, we must have $u \neq w$ because otherwise $d_T(x,y) = 2$, which means $xy \in E(\M(T))$ and thus $xy \notin E(\M^2(T))$, contradicting our supposition.
    Therefore $\pi$ is a path. Since $T$ contains no cycles, there is no shorter path from $x$ to $y$, and thus $d_T(x,y) = 4$.
    Conversely, suppose that $d_T(x,y) = 4$.
    Then there is a fully minimal $2$-walk of length $2$ from $x$ to $y$ (in the sense of Definition~\ref{def:2walk}), and so by Theorem~\ref{thm: rel min walk implies edge in Mk} we have $xy \in E(\M^2(T))$.
\end{proof}

In order to describe the relative positions of vertex pairs, we introduce the following shorthand.
Given vertices $x,y \in V(T)$,
there is a unique minimal path from $x$ to $y$, consisting of a sequence of $p$ upward steps followed by a sequence of $q$ downward steps, with $p,q \in \N$ such that $p+q = d_T(x,y)$.
We abbreviate this minimal path as
\begin{equation}
    \label{shorthand}
    \pi_T(x,y) = (x, \uparrow^p, \downarrow^q, y).
\end{equation}
For example, $x$ and $y$ are first cousins if and only if we have $\pi_T(x,y) = (x, \uparrow^2, \downarrow^2, y)$; as another example, $x$ is the great-grandchild of $y$ if and only if $\pi_T(x,y) = (x,\uparrow^3,\downarrow^0, y)$.

\begin{lemma}
    \label{lemma:4 equals 4 4s}
    Let $T = T(h,m)$ with $h \geq 5$, and let $xy \in E(\M^2(T))$.
    Then there exists $z \in V(T)$ such that both $xz$ and $yz$ are in $E(\M^3(T))$.
\end{lemma}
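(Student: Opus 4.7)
By Lemma~\ref{lemma:M2 is d4}, the hypothesis $xy \in E(\M^2(T))$ is equivalent to $d_T(x, y) = 4$. To show $xz, yz \in E(\M^3(T))$, it suffices (by the same lemma and the definition $\M^3(T) = \M(\M^2(T))$) to exhibit $z \in V(T)$ together with auxiliary vertices $w_1, w_2 \in V(T)$ such that
\[
d_T(x,z) \neq 4, \quad d_T(y,z) \neq 4, \quad d_T(x,w_1) = d_T(w_1,z) = 4 = d_T(y,w_2) = d_T(w_2,z).
\]
Indeed, these conditions guarantee $xz, yz \notin E(\M^2(T))$ while $xw_1, w_1z, yw_2, w_2z \in E(\M^2(T))$, whence $d_{\M^2(T)}(x,z) = d_{\M^2(T)}(y,z) = 2$, and so $xz, yz \in E(\M^3(T))$ as required.

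I would then proceed by a case analysis on the shape of the minimal path $\pi_T(x,y) = (x, \uparrow^p, \downarrow^q, y)$ with $p+q = 4$. Let $r$ denote the lowest common ancestor of $x$ and $y$ (the top of this path), at depth $d_r$; so $d_x = d_r+p$ and $d_y = d_r+q$. After swapping $x$ and $y$ if necessary, the three cases are $(p,q) \in \{(4,0),(3,1),(2,2)\}$, and each splits further according to $d_r$. In each subcase, I construct $z$ explicitly, using the hypothesis $h \geq 5$ to guarantee enough height and branching in $T$. Candidate constructions include: a sibling of $r$'s grandparent at depth $d_r-2$ (when $d_r \geq 3$), so that the $x$-to-$z$ and $y$-to-$z$ paths both travel through $r$; a depth-$4$ vertex in a subtree of the root disjoint from those containing $x$ and $y$ (when passage through the root gives appropriate distances); or a descendant in a sibling subtree at a carefully chosen depth (for the remaining boundary cases). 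In each subcase the auxiliary $w_1, w_2$ are then chosen from $N_4(x) \cap N_4(z)$ and $N_4(y) \cap N_4(z)$ respectively, where $N_4(v) = \{u \in V(T) : d_T(u,v) = 4\}$; natural choices include $r$'s sibling, the root itself, or a carefully placed cousin-type vertex of $x$ or $y$.

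The main obstacle is navigating the case analysis, particularly in the tight regime $m = 2$ with $h = 5$, where both the narrowness and shallowness of $T$ limit the viable candidates for $z$. In these corner cases, the construction of $z$ must be tuned to the precise values of $(p, q, d_r)$, and, once specified, verification of the four displayed distance conditions reduces to a direct computation along the unique minimal paths in $T$.
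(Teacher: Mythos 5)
Your reduction is exactly the paper's: translate $xy \in E(\M^2(T))$ into $d_T(x,y)=4$ via Lemma~\ref{lemma:M2 is d4}, then seek $z$ with $d_T(x,z)\neq 4$, $d_T(y,z)\neq 4$, and $d_{\M^2(T)}(x,z)=d_{\M^2(T)}(y,z)=2$, splitting into the three cases $(p,q)\in\{(4,0),(3,1),(2,2)\}$. However, the proposal stops at the point where the actual work begins: you never fix a verified choice of $z$ in any case, offering instead a menu of ``candidate constructions'' and conceding that they ``must be tuned'' to $(p,q,d_r)$. That tuning is the entire content of the lemma, and at least one of your named candidates demonstrably fails where you suggest it: a sibling of $r$'s grandparent satisfies $d_T(r,z)=4$, so in the case $(p,q)=(4,0)$ (where $r=y$) it violates the requirement $d_T(y,z)\neq 4$, and in the case $(3,1)$ it lands at a depth of the wrong parity, hence in the wrong connected component of $\M^2(T)$ altogether. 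Since you have not identified which candidate survives in which case, the proof is not yet there.

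The paper resolves this by giving explicit positions for $z$ relative to $x$: for $(4,0)$, take $\pi_T(x,z)=(x,\uparrow^4,\downarrow^2,z)$; for $(3,1)$, take $(x,\uparrow^4,\downarrow^4,z)$ when $\operatorname{depth}x\geq 4$ and $(x,\uparrow^2,\downarrow^4,z)$ when $\operatorname{depth}x\leq h-2$; for $(2,2)$, take $(x,\uparrow^4,\downarrow^2,z)$ or $(x,\uparrow^2,\downarrow^0,z)$ under the analogous depth restrictions. The essential observation --- which your outline gestures at but does not establish --- is that in each of the latter two cases the two alternatives cover complementary depth ranges whose union is everything precisely when $h\geq 5$; this is where the hypothesis enters, and it is a parity/range argument, not a delicate $m=2$, $h=5$ corner case as you anticipate. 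To complete your proof you would need to commit to one construction per case, verify the four distance conditions, and prove that the depth restrictions are jointly exhaustive for $h \geq 5$.
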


\begin{proof}
    We need to find $z$ such that $d_{\M^2(T)}(x,z) = d_{\M^2(T)}(y,z) = 2$.
    Equivalently, $z$ must be distinct from $x$ and $y$, and must be connected to $x$ (and separately to $y$) by concatenating two length-4 paths in $T$; moreover, we must have $d_T(x,z) \neq 4$ and $d_T(y,z) \neq 4$.
    Since by hypothesis we have $xy \in E(\M^2(T))$, Lemma~\ref{lemma:M2 is d4} implies that $d_T(x,y)=4$.
    Therefore, there are three cases (up to symmetry) which must be checked:
    \begin{enumerate}
    \setlength{\itemsep}{1ex}
        \item If $\pi_T(x,y) = (x,\uparrow^4,\downarrow^0,y)$, then we can take $z$ such that $\pi_T(x,z)$ $ = (x,\uparrow^4,\downarrow^2,z)$, as in Figure~\ref{fig:lemma 4 equals 4 4s}(A).
        \item If $\pi_T(x,y) = (x, \uparrow^3, \downarrow^1,y)$, then we can take $z$ such that $\pi_T(x,z)$ $ = (x,\uparrow^4, \downarrow^4, z)$ as depicted in Figure~\ref{fig:lemma 4 equals 4 4s}(B), as long as $x$ has depth~${\geq 4}$.
        We can also take $z$ such that $\pi_T(x,z) = (x, \uparrow^2, \downarrow^4, z)$ as depicted in Figure~\ref{fig:lemma 4 equals 4 4s}(C), as long as $x$ has depth $\leq h-2$. 
        The tree $T(h,m)$ admits at least one of these two possibilities if and only if $h \geq 5$.

        \item If $\pi_T(x,y) = (x, \uparrow^2, \downarrow^2, y)$, then we can take $z$ such that $\pi_T(x,z) = (x, \uparrow^4, \downarrow^2, z)$ as in Figure~\ref{fig:lemma 4 equals 4 4s}(D), as long as $x$ and $y$ have depth~${\geq 4}$.
        We can also take $z$ such that $\pi_T(x,z) = (x, \uparrow^2, \downarrow^0, z)$ as in Figure~\ref{fig:lemma 4 equals 4 4s}(E), as long as $x$ and $y$ have depth $\leq h-2$.
        The tree $T(h,m)$ admits at least one of these possibilities if and only if $h \geq 5$. \qedhere
        \end{enumerate}
\end{proof}

\begin{figure}[h!]
    \centering
    \begin{subfigure}[b]{0.3\textwidth}
        \centering

\begin{center}
\resizebox{1\textwidth}{!}{%
\trimbox{0cm 0cm 0cm 0cm}{ 
\begin{tikzpicture}[level distance=10mm, state/.style={circle, draw, minimum size=1mm}]]
  \tikzstyle{every node}=[font=\Large, draw=black,circle,inner sep=3pt, minimum size=1mm]
  \tikzstyle{level 1}=[sibling distance=25mm, set style={{every node}+=[draw=black]}]
  \tikzstyle{level 2}=[sibling distance=15mm, set style={{every node}+=[draw=black]}]
  \tikzstyle{level 3}=[sibling distance=15mm,  set style={{every node}+=[draw=black]}]
  \tikzstyle{level 4}=[sibling distance=15mm,  set style={{every node}+=[draw=black]}]
  \tikzstyle{level 5}=[sibling distance=15mm,  set style={{every node}+=[draw=black]}]

    \node[label={$y$}] (y) {}
     child {node (1) {}
      child {node (2) {}
       child {node (3) {}
        child {node[label=below:{$x$}] (x) {}}
        child[fill=none] {edge from parent[draw=none]}
       }
       child[fill=none] {edge from parent[draw=none]
        child[fill=none] {edge from parent[draw=none]}
        child[fill=none] {edge from parent[draw=none]}
       }
      }
      child {node[label=below:{$a$}] (a) {}
       child[fill=none] {edge from parent[draw=none]
        child[fill=none] {edge from parent[draw=none]}
        child[fill=none] {edge from parent[draw=none]}
        }
       child[fill=none] {edge from parent[draw=none]
        child[fill=none] {edge from parent[draw=none]}
        child[fill=none] {edge from parent[draw=none]}
       }
      }
      }
      child {node (4) {}
       child {node[label=below:{$z$}] (z) {}
        child[fill=none] {edge from parent[draw=none]
         child[fill=none] {edge from parent[draw=none]}
         child[fill=none] {edge from parent[draw=none]}
         }
       child[fill=none] {edge from parent[draw=none]
        child[fill=none] {edge from parent[draw=none]}
        child[fill=none] {edge from parent[draw=none]}
       }
      }
       child {node (5) {}
        child[fill=none] {edge from parent[draw=none]
         child[fill=none] {edge from parent[draw=none]}
         child[fill=none] {edge from parent[draw=none]}
        }
       child {node (6) {}
        child[fill=none] {edge from parent[draw=none]}
        child {node[label=below:{$b$}] (b) {}}
        }
       }
       };

    \draw[-, dotted,ultra thick,relative,red] (a) to[out=-60,in=-200] (x);

    \draw[-, dotted,ultra thick,relative,red] (a) to[out=120,in=60, looseness=3.5] (z);

    \draw[-, solid,ultra thick,relative,blue] (z) to[out=60,in=200] (b);
  
    \draw[-, solid,ultra thick,relative,blue] (y) to[out=40,in=-220] (b);

\end{tikzpicture}
}
}
\end{center}
        \caption{}
        \label{subfig:lemma73 1}
    \end{subfigure}
        \begin{subfigure}[b]{0.3\textwidth}
        \centering

\begin{center}
\resizebox{1\textwidth}{!}{%
\trimbox{0cm 0cm 0cm 0cm}{ 
\begin{tikzpicture}[level distance=10mm, state/.style={circle, draw, minimum size=1mm}]]
  \tikzstyle{every node}=[font=\Large, draw=black,circle,inner sep=3pt, minimum size=1mm]
  \tikzstyle{level 1}=[sibling distance=25mm, set style={{every node}+=[draw=black]}]
  \tikzstyle{level 2}=[sibling distance=15mm, set style={{every node}+=[draw=black]}]
  \tikzstyle{level 3}=[sibling distance=15mm,  set style={{every node}+=[draw=black]}]
  \tikzstyle{level 4}=[sibling distance=15mm,  set style={{every node}+=[draw=black]}]
  \tikzstyle{level 5}=[sibling distance=15mm,  set style={{every node}+=[draw=black]}]

    \node[label={$a$}] (a) {}
     child {node (1) {}
      child {node (2) {}
       child {node (3) {}
        child {node[label=below:{$x$}] (x) {}}
        child[fill=none] {edge from parent[draw=none]}
       }
       child[fill=none] {edge from parent[draw=none]
        child[fill=none] {edge from parent[draw=none]}
        child[fill=none] {edge from parent[draw=none]}
       }
      }
      child {node[label=below:{$y$}] (y) {}
       child[fill=none] {edge from parent[draw=none]
        child[fill=none] {edge from parent[draw=none]}
        child[fill=none] {edge from parent[draw=none]}
        }
       child[fill=none] {edge from parent[draw=none]
        child[fill=none] {edge from parent[draw=none]}
        child[fill=none] {edge from parent[draw=none]}
       }
      }
      }
      child {node (4) {}
       child {node[label=below:{$b$}] (b) {}
        child[fill=none] {edge from parent[draw=none]
         child[fill=none] {edge from parent[draw=none]}
         child[fill=none] {edge from parent[draw=none]}
         }
       child[fill=none] {edge from parent[draw=none]
        child[fill=none] {edge from parent[draw=none]}
        child[fill=none] {edge from parent[draw=none]}
       }
      }
       child {node (5) {}
        child[fill=none] {edge from parent[draw=none]
         child[fill=none] {edge from parent[draw=none]}
         child[fill=none] {edge from parent[draw=none]}
        }
       child {node (6) {}
        child[fill=none] {edge from parent[draw=none]}
        child {node[label=below:{$z$}] (z) {}}
        }
       }
       };

    \draw[-, dotted,ultra thick,relative,red] (x) to[out=40,in=-220] (a);

    \draw[-, dotted,ultra thick,relative,red] (a) to[out=40,in=-220] (z);

    \draw[-, solid,ultra thick,relative,blue] (y) to[out=120,in=60, looseness=3.5] (b);

    \draw[-, solid,ultra thick,relative,blue] (b) to[out=60,in=200] (z);

\end{tikzpicture}
}
}
\end{center}
        \caption{}
        \label{subfig:lemma73 2a}
    \end{subfigure}
        \begin{subfigure}[b]{0.3\textwidth}
        \centering
        \begin{center}
\resizebox{1\textwidth}{!}{%
\trimbox{0cm 0cm 2cm 0cm}{ 
\begin{tikzpicture}[level distance=10mm, state/.style={circle, draw, minimum size=1mm}]]
  \tikzstyle{every node}=[font=\large, draw=black,circle,inner sep=3pt]
  \tikzstyle{level 1}=[sibling distance=20mm, set style={{every node}+=[draw=black]}]
  \tikzstyle{level 2}=[sibling distance=20mm, set style={{every node}+=[draw=black]}]
  \tikzstyle{level 3}=[sibling distance=20mm,  set style={{every node}+=[draw=black]}]
  \tikzstyle{level 4}=[sibling distance=20mm,  set style={{every node}+=[draw=black]}]
  \tikzstyle{level 5}=[sibling distance=20mm,  set style={{every node}+=[draw=black]}]
  \tikzstyle{level 6}=[sibling distance=20mm,  set style={{every node}+=[draw=black]}]

  \node (1) {}
     child {node (2) {}
      child {node (3) {}
       child {node[label=below:{$x$}] (x) {}
        child[fill=none] {edge from parent[draw=none]}
        child[fill=none] {edge from parent[draw=none]}
       }
       child[fill=none] {edge from parent[draw=none]
        child[fill=none] {edge from parent[draw=none]}
        child[fill=none] {edge from parent[draw=none]}
       }
      }
      child {node (4) {}
       child {node (5) {}
        child {node (6) {}
         child {node[label=below:{$z$}] (z) {}}
         child[fill=none] {edge from parent[draw=none]}
        }
        child[fill=none] {edge from parent[draw=none]}
        }
       child {node[label=right:{$a=b$}] (a) {}
        child[fill=none] {edge from parent[draw=none]}
        child[fill=none] {edge from parent[draw=none]}
       }
      }
      }
      child {node[label=below:{$y$}] (y) {}
       child[fill=none] {edge from parent[draw=none]
        child[fill=none] {edge from parent[draw=none]
         child[fill=none] {edge from parent[draw=none]}
         child[fill=none] {edge from parent[draw=none]}
         }
       child[fill=none] {edge from parent[draw=none]
        child[fill=none] {edge from parent[draw=none]}
        child[fill=none] {edge from parent[draw=none]}
       }
      }
       child[fill=none] {edge from parent[draw=none]
        child[fill=none] {edge from parent[draw=none]
         child[fill=none] {edge from parent[draw=none]}
         child[fill=none] {edge from parent[draw=none]}
        }
       child[fill=none] {edge from parent[draw=none]
        child[fill=none] {edge from parent[draw=none]}
        child[fill=none] {edge from parent[draw=none]}
        }
       }
       };

       \draw[-, dotted,ultra thick,relative,red] (x) to[out=40,in=140, looseness=1.4] (a);

       \draw[-, dotted,ultra thick,relative,red] (z) to[out=-5,in=120, looseness=1] (a);
 
       \draw[-, solid,ultra thick,relative,blue] (z) to[out=-15,in=140, looseness=1] (a);

       \draw[-, solid,ultra thick,relative,blue] (y) to[out=235,in=215, looseness=2] (a);

\end{tikzpicture}
}
}
\end{center}
        \caption{}
        \label{subfig:lemma73 2b}
    \end{subfigure}
        \begin{subfigure}[b]{0.3\textwidth}
        \centering

\begin{center}
\resizebox{1\textwidth}{!}{%
\trimbox{0cm 0cm 1.5cm 0cm}{ 
\begin{tikzpicture}[level distance=10mm, state/.style={circle, draw, minimum size=1mm}]]
  \tikzstyle{every node}=[font=\Large, draw=black, circle,inner sep=3pt]
  \tikzstyle{level 1}=[sibling distance=25mm, set style={{every node}+=[draw=black]}]
  \tikzstyle{level 2}=[sibling distance=25mm, set style={{every node}+=[draw=black]}]
  \tikzstyle{level 3}=[sibling distance=25mm,  set style={{every node}+=[draw=black]}]
  \tikzstyle{level 4}=[sibling distance=12mm,  set style={{every node}+=[draw=black]}]
  \tikzstyle{level 5}=[sibling distance=5mm,  set style={{every node}+=[draw=black]}]

  \node (1) {}
     child {node (2) {}
      child {node (3) {}
       child {node (4) {}
        child {node[label=below:{$x$}] (x) {}}
        child {node (5) {}}
       }
       child {node (6) {}
        child {node (7) {}}
        child {node[label=below:{$y$}] (y) {}}
       }
      }
      child {node[label=right:{$a=b$}] (a) {}
       child[fill=none] {edge from parent[draw=none]
        child[fill=none] {edge from parent[draw=none]}
        child[fill=none] {edge from parent[draw=none]}
        }
       child[fill=none] {edge from parent[draw=none]
        child[fill=none] {edge from parent[draw=none]}
        child[fill=none] {edge from parent[draw=none]}
       }
      }
      }
      child {node (9) {}
       child[fill=none] {edge from parent[draw=none]
        child[fill=none] {edge from parent[draw=none]
         child[fill=none] {edge from parent[draw=none]}
         child[fill=none] {edge from parent[draw=none]}
         }
       child[fill=none] {edge from parent[draw=none]
        child[fill=none] {edge from parent[draw=none]}
        child[fill=none] {edge from parent[draw=none]}
       }
      }
       child {node[label=below:{$z$}] (z) {}
        child[fill=none] {edge from parent[draw=none]
         child[fill=none] {edge from parent[draw=none]}
         child[fill=none] {edge from parent[draw=none]}
        }
       child[fill=none] {edge from parent[draw=none]
        child[fill=none] {edge from parent[draw=none]}
        child[fill=none] {edge from parent[draw=none]}
        }
       }
       };

       \draw[-, solid,ultra thick,relative,blue] (x) to[out=15,in=120, looseness=1] (a);

       \draw[-, solid,ultra thick,relative,blue] (a) to[out=120,in=150, looseness=2] (z);


       \draw[-, dotted,ultra thick,relative,red] (y) to[out=35,in=90, looseness=1.4] (a);

       \draw[-, dotted,ultra thick,relative,red] (a) to[out=113,in=158, looseness=1.8] (z);

\end{tikzpicture}
}
}
\end{center}
        \caption{}
        \label{subfig:lemma73 3a}
    \end{subfigure}
        \begin{subfigure}[b]{0.3\textwidth}
        \centering

\begin{center}
\resizebox{1\textwidth}{!}{%
\trimbox{0cm 0cm 0cm 0cm}{ 
\begin{tikzpicture}[level distance=10mm, state/.style={circle, draw, minimum size=1mm}]
  \tikzstyle{every node}=[font=\Large, draw=black,circle,inner sep=3pt, minimum size=1mm]
  \tikzstyle{level 1}=[sibling distance=25mm, set style={{every node}+=[draw=black]}]
  \tikzstyle{level 2}=[sibling distance=15mm, set style={{every node}+=[draw=black]}]
  \tikzstyle{level 3}=[sibling distance=15mm,  set style={{every node}+=[draw=black]}]
  \tikzstyle{level 4}=[sibling distance=15mm,  set style={{every node}+=[draw=black]}]
  \tikzstyle{level 5}=[sibling distance=15mm,  set style={{every node}+=[draw=black]}]

    \node[label={$z$}] (z) {}
     child {node (1) {}
      child {node (2) {}
       child {node (3) {}
        child {node[label=below:{$a$}] (a) {}}
        child[fill=none] {edge from parent[draw=none]}
       }
       child[fill=none] {edge from parent[draw=none]
        child[fill=none] {edge from parent[draw=none]}
        child[fill=none] {edge from parent[draw=none]}
       }
      }
      child {node[label=below:{$x$}] (x) {}
       child[fill=none] {edge from parent[draw=none]
        child[fill=none] {edge from parent[draw=none]}
        child[fill=none] {edge from parent[draw=none]}
        }
       child[fill=none] {edge from parent[draw=none]
        child[fill=none] {edge from parent[draw=none]}
        child[fill=none] {edge from parent[draw=none]}
       }
      }
      }
      child {node (4) {}
       child {node[label=below:{$y$}] (y) {}
        child[fill=none] {edge from parent[draw=none]
         child[fill=none] {edge from parent[draw=none]}
         child[fill=none] {edge from parent[draw=none]}
         }
       child[fill=none] {edge from parent[draw=none]
        child[fill=none] {edge from parent[draw=none]}
        child[fill=none] {edge from parent[draw=none]}
       }
      }
       child {node (5) {}
        child[fill=none] {edge from parent[draw=none]
         child[fill=none] {edge from parent[draw=none]}
         child[fill=none] {edge from parent[draw=none]}
        }
       child {node (6) {}
        child[fill=none] {edge from parent[draw=none]}
        child {node[label=below:{$b$}] (b) {}}
        }
       }
       };

    \draw[-, solid,ultra thick,relative,blue] (x) to[out=-60,in=-200] (a);

    \draw[-, solid,ultra thick,relative,blue] (a) to[out=40,in=-220] (z);

    \draw[-, dotted,ultra thick,relative,red] (z) to[out=40,in=-220] (b);

    \draw[-, dotted,ultra thick,relative,red] (y) to[out=60,in=200] (b);

\end{tikzpicture}
}
}
\end{center}
        \caption{}
        \label{subfig:lemma73 3b}
    \end{subfigure}

    \caption{Illustrations of the proof of Lemma~\ref{lemma:4 equals 4 4s}.
    In each case, $xz$ and $yz$ are edges in $\M^3(T)$.
    The diagrams above show this by exhibiting a path $(x,a,z)$ in $\M^2(T)$, depicted by solid arcs, and a path $(y,b,z)$ in $\M^2(T)$, depicted by dotted arcs.
    Note that we depict the case $m=2$ (i.e., where $T$ is a binary tree), but the situation is the same for any $m \geq 2$.}
    \label{fig:lemma 4 equals 4 4s}
\end{figure}
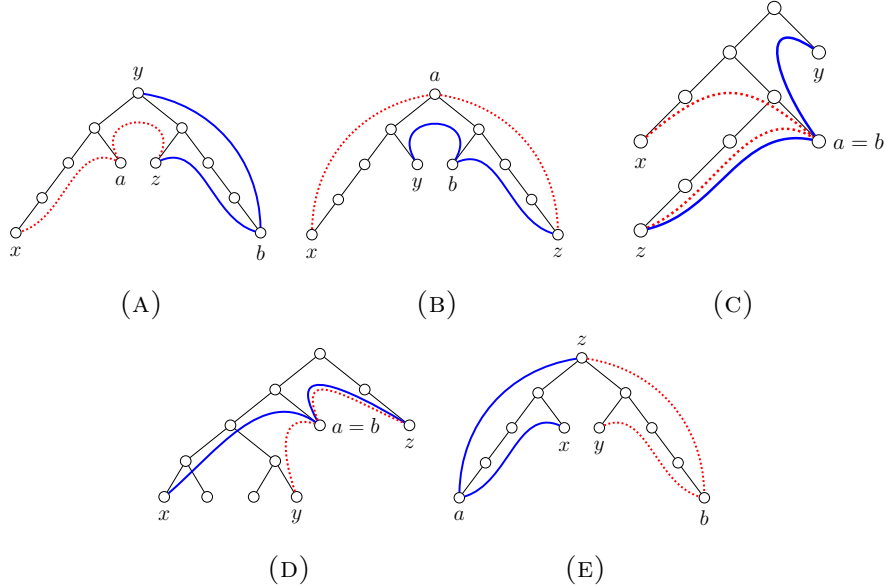

\begin{lemma}
    \label{lemma:M3T}
    Let $T = T(h,m)$, where $h \geq 5$.
    Suppose ${d_{\M^2(T)}(x,y) = 3}$. Then there exists $z \in V(T)$ such that both $xz$ and $yz$ are in $E(\M^3(T))$ unless

\begin{center}
$x$ and $y$ both have depth $1$ in $T$ with $m = 2$ and $h \in \{5,6\}$.
\end{center}

        

        
    
\end{lemma}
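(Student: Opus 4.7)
The plan is to construct $z$ directly. By Lemma~\ref{lemma:M2 is d4} together with Lemma~\ref{lemma:edge in Mk implies walk}, the requirement $xz,yz \in E(\M^3(T))$ amounts to exhibiting $z \in V(T)\setminus\{x,y\}$ and intermediate vertices $c_x,c_y \in V(T)$ with
\[
    d_T(x,c_x) \,=\, d_T(c_x,z) \,=\, d_T(y,c_y) \,=\, d_T(c_y,z) \,=\, 4,
\]
and additionally $d_T(x,z) \ne 4$ and $d_T(y,z) \ne 4$ (so that $xz$ and $yz$ do not already lie in $E(\M^2(T))$). Equivalently, $z$ must be at $\M^2(T)$-distance exactly $2$ from each of $x$ and $y$.

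First I would unpack the hypothesis $d_{\M^2(T)}(x,y) = 3$. By Lemma~\ref{lemma:M2 is d4}, this says that $d_T(x,y) \ne 4$, that no single $c$ satisfies $d_T(x,c) = d_T(c,y) = 4$, but that some $a,b$ do satisfy $d_T(x,a) = d_T(a,b) = d_T(b,y) = 4$. In particular $d_T(x,y) \le 12$. I would then organize the argument by the shape $(p,q)$ of $\pi_T(x,y) = (x,\uparrow^p,\downarrow^q,y)$ from~\eqref{shorthand}, together with the depths of $x$ and $y$ in $T$. The possible shapes compatible with the three restrictions above can be enumerated, and in each one the witnessing quadruple $(a,b)$ pins down (up to symmetry) the relative position of $x$ and $y$ in $T$.

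In each such case I would construct $z$ explicitly, in the same spirit as the proof of Lemma~\ref{lemma:4 equals 4 4s}. The template is: for $c_x$, choose a descendant of $x$ at depth four below $x$ (requires the depth of $x$ to be $\le h - 4$), or else a depth-$4$ descendant of an ancestor of $x$ reached by going up $p$ and down $4-p$ for suitable $p$ (using that the tree is $m$-ary with $m \ge 2$, so such branching-off room exists); then take $z$ to be a descendant of $c_x$ four levels lower, chosen to sit in a sibling subtree of the earlier branches so that $d_T(x,z) \ne 4$. A symmetric choice delivers $c_y$ and shows $d_T(c_y,z) = 4$. The excluded configuration — $x,y$ both of depth $1$ (hence children of the root), $m = 2$, and $h \in \{5,6\}$ — is exactly the regime where this construction fails, because the tree has no third depth-$1$ sibling subtree and is too shallow to produce a $z$ satisfying all four distance constraints simultaneously.

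The main obstacle is showing that the exception is genuine: one must check that when $x,y$ have depth $1$, $m = 2$, and $h \in \{5,6\}$, no valid $z$ exists. In this regime the tree consists only of the root $r$ together with the $x$-subtree and $y$-subtree, each of height $h-1 \in \{4,5\}$. The vertices at $T$-distance $4$ from $x$ are then exactly the depth-$5$ descendants of $x$ (if $h \ge 5$) and the depth-$3$ descendants of $y$; analogously for $y$. One then enumerates the finitely many options for $c_x, c_y$ and $z$ satisfying $d_T(c_x,z) = d_T(c_y,z) = 4$, and verifies directly that in every case either $d_T(x,z) = 4$ or $d_T(y,z) = 4$, pushing the edge into $E(\M^2(T))$ rather than $E(\M^3(T))$. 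Once $h \ge 7$ there is enough vertical room, and once $m \ge 3$ there is a spare sibling subtree off the root, so the obstruction vanishes and the general construction above succeeds.
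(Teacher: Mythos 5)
Your proposal is correct and follows essentially the same route as the paper: reduce the claim to finding $z$ at $\M^2(T)$-distance exactly $2$ from both $x$ and $y$, enumerate the finitely many relative positions $(x,\uparrow^p,\downarrow^q,y)$ compatible with $d_{\M^2(T)}(x,y)=3$ (the paper tabulates $15$ such cases plus the depth-$1$ case), exhibit $z$ explicitly in each, and verify the exceptional configuration by direct inspection. The only cosmetic difference is that the paper dispatches the depth-$1$, $m>2$ situation by noting the hypothesis is then vacuous ($d_{\M^2(T)}(x,y)=2$), whereas you frame it as the construction succeeding; this does not affect correctness.
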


\begin{proof}
We need to find a vertex $z$ with the same properties described in the proof of Lemma~\ref{lemma:4 equals 4 4s}.
    Up to symmetry, there are 15 relative positions for $x$ and $y$ such that $d_{\M^2(T)}(x,y) = 3$, and such that $x$ and $y$ do not both have depth 1 in $T$.
    (To determine these 15 positions, one need only check vertex pairs whose distance is a positive even integer which is at most $3 \times 4 = 12$.)
    In Table~\ref{table:proofM3T}, we exhibit a choice of the desired vertex $z$ for each of these 15 positions, using the shorthand in~\eqref{shorthand}.
    (Note that the vertex $z$ described in the table may not be unique; rather, any $z$ that satisfies the location given in the table has the desired property.)
    In each case, it is straightforward to verify (via diagrams like those in Figure~\ref{fig:lemma 4 equals 4 4s}) that both $xz$ and $yz$ belong to $E(\M^3(T))$.

    Suppose now that $x$ and $y$ both have depth $1$ in $T$.
    If $m > 2$, then we observe that $d_{\M^2(T)}(x,y) = 2$. If $m = 2$ and $h>6$, then we can take $z$ such that $\pi_T(x,z)$ $ = (x,\uparrow^1, \downarrow^7, z)$.
    If, however, $m=2$ and $h \in \{5,6\}$, then $d_{\M^2(T)}(x,y) = 3$, and it is straightforward to verify that there is no vertex $z$ with the desired property.
\end{proof}

\begin{table}[h!]
    \centering
    \begin{tabular}{|c|c|}
    \hline
        $\pi_T(x,y) = (x, \uparrow^p, \downarrow^q, y)$ & $\pi_T(x,z) = (x, \uparrow^r, \downarrow^s, z)$ \\
    \hline
        $p=12, \; q=0$ & \multirow{6}{*}{$r=6, \; s=2$} \\
        \cline{1-1}
    $p=11, \; q=1$ &  \\
    \cline{1-1}
    $p=10, \; q=2$ &  \\
    \cline{1-1}
    $p=9, \; q=3$ &  \\
    \cline{1-1}
    $p=8, \; q=4$ &  \\
    \cline{1-1}
    $p=7, \; q=5$ &  \\
    \hline
    $p=6, \; q=6$ & $r = 6, \: s = 0$ \\
    \hline
    $p=10, \; q=0$ & \multirow{6}{*}{$r=4, \; s=2$} \\
    \cline{1-1}
    $p=9, \; q=1$ &  \\
    \cline{1-1}
    $p=8, \; q=2$ &  \\
    \cline{1-1}
    $p=7, \; q=3$ &  \\
    \cline{1-1}
    $p=6, \; q=4$ &  \\
    \cline{1-1}
    $p=5, \; q=5$ &  \\
    \hline
    $p=3, \; q=3$ (if $\text{depth $x =3$}$) & $r = 2, \: s = 4$ \\
    \hline
    $p=2, \; q=0$ (if $\text{depth $x \geq h-1$}$) & $r = 4, \: s = 4$ \\
    \hline
    \end{tabular}
    \caption{Case-by-case proof of 
    Lemma~\ref{lemma:M3T}, using the shorthand in~\eqref{shorthand}.
    The first column gives the relative positions of $x$ and $y$, and the second column exhibits the vertex~$z$ (described relative to $x$) referred to in the lemma.}
    \label{table:proofM3T}
\end{table}

\begin{lemma}
    \label{lemma:diam M2T}
     Let $T = T(h,m)$ with $h \geq 5$.
    Then $\M^2(T)$ has two connected components, namely the vertices with even depth and the vertices with odd depth.
    The maximum of the diameters of these two connected components is $\lceil h/2 \rceil$.
\end{lemma}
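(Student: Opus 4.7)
The first step is to invoke Lemma~\ref{lemma:M2 is d4}: $xy \in E(\M^2(T))$ if and only if $d_T(x,y)=4$. Since $T$ is bipartite by depth parity and $4$ is even, every $\M^2$-edge joins two vertices of the same depth parity. Hence the even- and odd-depth vertex sets are each closed under $\M^2$-adjacency, and the lemma reduces to showing that each is a single connected component and that the larger of the two diameters equals $\lceil h/2 \rceil$.

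For the upper bound on the diameter in each parity class, the plan is to construct explicit $\M^2$-paths using two basic moves. The \emph{climb} sends $v$ at depth $d \geq 4$ to its ancestor at depth $d-4$ (their tree-distance is $4$, so this is an $\M^2$-edge). The \emph{sideways} move sends $v$ either to a cousin at the same depth (LCA at depth $d-2$, valid for $d \geq 2$) or to a sibling of $v$'s grandparent at depth $d-2$ (LCA at depth $d-3$, valid for $d \geq 3$); both are $\M^2$-edges, and both are available under the hypothesis $h \geq 5$ together with $m \geq 2$. Iterating climbs brings any vertex's depth down to a small value in roughly $\lceil d/4 \rceil$ steps, after which a single ``crossing'' $\M^2$-edge between the two root-subtrees (either through the root itself, through a pair of depth-$2$ cousins across the root, or through a depth-$1$/depth-$3$ pair) bridges the two halves of the path. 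A case analysis on the residue of $h$ modulo~$4$, together with the parity class in question, then yields an $\M^2$-path of length at most $\lceil h/2 \rceil$ for every same-parity pair $x,y$, proving both connectedness and the diameter upper bound.

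The matching lower bound is immediate from the triangle inequality for tree distances: take $x,y$ at depth $h$ in distinct subtrees of the root (such a pair lies in whichever parity class contains $h$, and exists since $m \geq 2$). Then $d_T(x,y)=2h$, so any $\M^2$-path $x = v_0, v_1, \ldots, v_k = y$ must satisfy $2h = d_T(x,y) \leq \sum_{i=0}^{k-1} d_T(v_i,v_{i+1}) = 4k$, whence $k \geq \lceil h/2 \rceil$. The principal obstacle is therefore the upper-bound construction, not the lower bound: arranging the climbs, sideways moves, and crossing so that the total path length is \emph{exactly} at most $\lceil h/2 \rceil$ (and not merely $O(h)$) requires threading through several cases depending on $h \bmod 4$ and on the relative positions of $x$ and $y$, with the sideways moves inserted at the right moments to avoid overshooting the correct crossing depth. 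This bookkeeping, rather than any single deep insight, is the technical heart of the proof.
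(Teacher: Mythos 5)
Your proposal is correct and follows essentially the same route as the paper's proof: both reduce to Lemma~\ref{lemma:M2 is d4}, split the vertices by depth parity, realize $\M^2$-paths by taking distance-$4$ jumps along the unique tree geodesic (your climbs and crossing are exactly these jumps, and your sideways moves play the role of the paper's backtracking detour through an extra neighbor when $d_T(x,y)\equiv 2 \pmod 4$), and obtain the diameter from $d_{\M^2(T)}(x,y)=\lceil d_T(x,y)/4\rceil$ together with $\diam(T)=2h$. The case bookkeeping you defer is precisely what the paper carries out in its enumerated special cases and Figure~\ref{fig:lemma 9.4}, so no new idea is missing.
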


\begin{proof}
    It is clear that if $d_T(x,y) = 4$, then the depths of $x$ and $y$ have the same parity.
    Thus by Lemma~\ref{lemma:M2 is d4}, if $xy \in E(\M^2(T))$ then the depth of $x$ and the depth of $y$ have the same parity.
    
    Conversely, we claim that any two vertices $x$ and $y$, whose depths have the same parity, are connected in $\M^2(T)$.
    To see this, observe that because $T$ is a tree, there is a unique path in $T$ between $x$ and $y$, which necessarily has even length.
    If this length is divisible by 4, then we are done by Lemma~\ref{lemma:M2 is d4}.
    Thus, let us assume that the length is $4\ell+2$ for some $\ell \in \N$, and that the path is given by $(x,w_1,w_2,\ldots,w_{4\ell+1},y)$. For $\ell>0$, if $w_{4\ell-1}$ is distinct from the root of the tree $T$, let $z$ denote any neighbor of $w_{4\ell-1}$ distinct from both $w_{4\ell-2}$ and $w_{4\ell}$. Then,
    $$(x,w_1,w_2,\ldots,w_{4\ell-2},w_{4\ell-1},z,w_{4\ell-1},w_{4\ell},w_{4\ell+1},y)$$
    is a walk in $T$ of length $4\ell$ for which, starting with the vertex $x$, every fourth vertex is distance $4$ from the previous vertex. Thus, the path $(x,w_4,w_8,\ldots,w_{4\ell-4},z,y)$ connects $x$ and $y$ in $\M^2(T)$. If $w_{3}$ is distinct from the root of the tree $T$, we can make a similar argument to show that $x$ and $y$ are connected in $\M^2(T)$. This leaves us to consider the following special cases:

        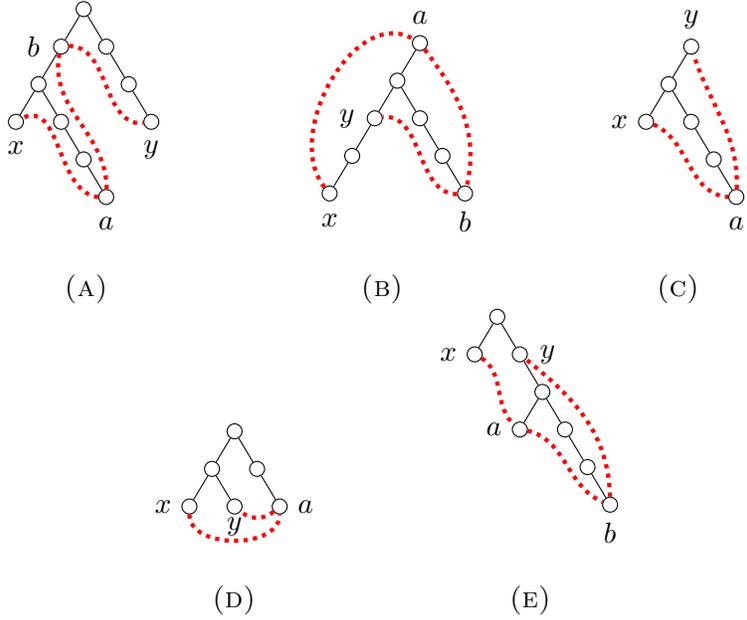
\begin{figure}[h!]
    \centering
    \begin{subfigure}[b]{0.3\textwidth}
        \centering
        \begin{center}
\begin{tikzpicture}[scale=.5,level distance=10mm, state/.style={circle, draw, minimum size=1mm}]]
  \tikzstyle{every node}=[font=\small, draw=black,circle,inner sep=2pt, minimum size=1mm]
  \tikzstyle{level 1}=[sibling distance=12mm, set style={{every node}+=[draw=black]}]
  \tikzstyle{level 2}=[sibling distance=12mm, set style={{every node}+=[draw=black]}]
  \tikzstyle{level 3}=[sibling distance=12mm,  set style={{every node}+=[draw=black]}]
  \tikzstyle{level 4}=[sibling distance=12mm,  set style={{every node}+=[draw=black]}]
  \tikzstyle{level 5}=[sibling distance=12mm,  set style={{every node}+=[draw=black]}]
  \tikzstyle{level 6}=[sibling distance=12mm,  set style={{every node}+=[draw=black]}]

    \node (u) {}
     child {
        node[label=left:{$b$}] (b) {}
              child {node (B) {}
                child {node (x)[label=below:{$x$}] {}
       }
       child {node (F) {}
        child[fill=none] {edge from parent[draw=none]}
        child {node (G) {}
            child[fill=none] {edge from parent[draw=none]}
            child {node[label=below:{$a$}] (a) {}}
            }
      }}
      child[fill=none] {edge from parent[draw=none]
       child[fill=none] {edge from parent[draw=none]
        child[fill=none] {edge from parent[draw=none]}
        child[fill=none] {edge from parent[draw=none]}
        }
       child[fill=none] {edge from parent[draw=none]
        child[fill=none] {edge from parent[draw=none]}
        child[fill=none] {edge from parent[draw=none]}
       }
      }
      }
      child {node{}
       child[fill=none] {edge from parent[draw=none]
        child[fill=none] {edge from parent[draw=none]
         child[fill=none] {edge from parent[draw=none]}
         child[fill=none] {edge from parent[draw=none]}
         }
       child[fill=none] {edge from parent[draw=none]
        child[fill=none] {edge from parent[draw=none]}
        child[fill=none] {edge from parent[draw=none]}
       }
      }
       child[fill=none] {node {}
        child[fill=none] {edge from parent[draw=none]
         child[fill=none] {edge from parent[draw=none]}
         child[fill=none] {edge from parent[draw=none]}
        }
       child[fill=none] {node[label=below:{$y$}] (y) {}
        child[fill=none] {edge from parent[draw=none]}
        child[fill=none] {edge from parent[draw=none]}
        }
       }
       };

    \draw[-,dotted,ultra thick,relative,red] (a) to[out=40,in=250] (x);
    
    \draw[-,dotted,ultra thick,relative,red] (a) to[out=-20,in=150] (b);

    \draw[-,dotted,ultra thick,relative,red] (b) to[out=45,in=230] (y);
       
\end{tikzpicture}
\end{center}
        \caption{}
        \label{subfig:lemma94 1}
    \end{subfigure}
        \begin{subfigure}[b]{0.3\textwidth}
        \centering
        \begin{center}
\begin{tikzpicture}[scale=.5,level distance=10mm, state/.style={circle, draw, minimum size=1mm}]]
  \tikzstyle{every node}=[font=\small, draw=black,circle,inner sep=2pt, minimum size=1mm]
  \tikzstyle{level 1}=[sibling distance=12mm, set style={{every node}+=[draw=black]}]
  \tikzstyle{level 2}=[sibling distance=12mm, set style={{every node}+=[draw=black]}]
  \tikzstyle{level 3}=[sibling distance=12mm,  set style={{every node}+=[draw=black]}]
  \tikzstyle{level 4}=[sibling distance=12mm,  set style={{every node}+=[draw=black]}]
  \tikzstyle{level 5}=[sibling distance=12mm,  set style={{every node}+=[draw=black]}]

    \node [label=above:{$a$}] (a) {}
     child {
        node {}
              child {node (y)[label=left:{$y$}] {}
                child {node {} 
                child{node[label=below:{$x$}] (x) {}}
                child[fill=none] {edge from parent[draw=none]}
                }
                child[fill=none] {edge from parent[draw=none]}
                }
                child {node {}
                child[fill=none] {edge from parent[draw=none]}
                child {node {}
                child[fill=none] {edge from parent[draw=none]}
                child {node[label=below:{$b$}] (b) {}}}
                }
            }
    child[fill=none] {edge from parent[draw=none]};
    
    \draw[-,dotted,ultra thick,relative,red] (x) to[out=70,in=90] (a);
    
    \draw[-,dotted,ultra thick,relative,red] (a) to[out=20,in=150] (b);

    \draw[-,dotted,ultra thick,relative,red] (b) to[out=45,in=230] (y);
       
\end{tikzpicture}
\end{center}
        \caption{}
        \label{subfig:lemma94 2a}
    \end{subfigure}
        \begin{subfigure}[b]{0.3\textwidth}
        \centering
        \begin{center}
\begin{tikzpicture}[scale=.5,level distance=10mm, state/.style={circle, draw, minimum size=1mm}]]
  \tikzstyle{every node}=[font=\small, draw=black,circle,inner sep=2pt, minimum size=1mm]
  \tikzstyle{level 1}=[sibling distance=12mm, set style={{every node}+=[draw=black]}]
  \tikzstyle{level 2}=[sibling distance=12mm, set style={{every node}+=[draw=black]}]
  \tikzstyle{level 3}=[sibling distance=12mm,  set style={{every node}+=[draw=black]}]
  \tikzstyle{level 4}=[sibling distance=12mm,  set style={{every node}+=[draw=black]}]
  \tikzstyle{level 5}=[sibling distance=12mm,  set style={{every node}+=[draw=black]}]

    \node [label=above:{$y$}] (y) {}
     child {
        node {}
              child {node (x)[label=left:{$x$}] {}}
                child {node {}
                child[fill=none] {edge from parent[draw=none]}
                child {node {}
                child[fill=none] {edge from parent[draw=none]}
                child {node[label=below:{$a$}] (a) {}}}
                }
            }
    child[fill=none] {edge from parent[draw=none]};
    
    \draw[-,dotted,ultra thick,relative,red] (x) to[out=20,in=220] (a);
    
    \draw[-,dotted,ultra thick,relative,red] (a) to[out=-20,in=180] (y);
       
\end{tikzpicture}
\end{center}
        \caption{}
        \label{subfig:lemma94 2b}
    \end{subfigure}
        \begin{subfigure}[b]{0.3\textwidth}
        \centering
        \begin{center}
\begin{tikzpicture}[scale=.5,level distance=10mm, state/.style={circle, draw, minimum size=1mm}]]
  \tikzstyle{every node}=[font=\small, draw=black,circle,inner sep=2pt, minimum size=1mm]
  \tikzstyle{level 1}=[sibling distance=12mm, set style={{every node}+=[draw=black]}]
  \tikzstyle{level 2}=[sibling distance=12mm, set style={{every node}+=[draw=black]}]
  \tikzstyle{level 3}=[sibling distance=12mm,  set style={{every node}+=[draw=black]}]

    \node {}
     child {
        node {}
              child {node [label=left:{$x$}] (x) {}}
                child {node [label={[label distance=-1mm]below:{$y$}}] (y) {}}}
            child{ node {}
            child[fill=none] {edge from parent[draw=none]}
            child{node[label=right:{$a$}] (a) {}}};
    
    \draw[-,dotted,ultra thick,relative,red] (x) to[out=270,in=270] (a);
    
    \draw[-,dotted,ultra thick,relative,red] (a) to[out=40,in=-220] (y);
       
\end{tikzpicture}
\end{center}
        \caption{}
        \label{subfig:lemma94 3a}
    \end{subfigure}
        \begin{subfigure}[b]{0.3\textwidth}
        \centering
        \begin{center}
\begin{tikzpicture}[scale=.5,level distance=10mm, state/.style={circle, draw, minimum size=1mm}]]
  \tikzstyle{every node}=[font=\small, draw=black,circle,inner sep=2pt, minimum size=1mm]
  \tikzstyle{level 1}=[sibling distance=12mm, set style={{every node}+=[draw=black]}]
  \tikzstyle{level 2}=[sibling distance=12mm, set style={{every node}+=[draw=black]}]
  \tikzstyle{level 3}=[sibling distance=12mm,  set style={{every node}+=[draw=black]}]

    \node {}
     child {
        node[label=left:{$x$}] (x) {}}
            child{ node[label=right:{$y$}] (y) {}
            child[fill=none] {edge from parent[draw=none]}
            child{node {}
            child{node[label=left:{$a$}] (a) {}}
            child{node {}
            child[fill=none] {edge from parent[draw=none]}
            child{node {}
            child[fill=none] {edge from parent[draw=none]}
            child{node[label=below:{$b$}] (b) {}
            }
            }}
            }
            };
    
    \draw[-,dotted,ultra thick,relative,red] (x) to[out=30,in=200] (a);

    \draw[-,dotted,ultra thick,relative,red] (a) to[out=30,in=200] (b);
    
    \draw[-,dotted,ultra thick,relative,red] (b) to[out=-30,in=200] (y);
       
\end{tikzpicture}
\end{center}
        \caption{}
        \label{subfig:lemma94 3b}
    \end{subfigure}

    \caption{Illustrations of the proof of Lemma~\ref{lemma:diam M2T}.
    In each case, the dotted arcs represent edges in $\M^2(T)$ that connect $x$ with $y$ via a path $(x,a,y)$ or $(x,a,b,y)$, respectively.}
    \label{fig:lemma 9.4}
\end{figure}

    \begin{enumerate}
        \item Suppose that $\ell=1$, and that $w_3$ is the root of the tree $T$. In this situation, $x$ and $y$ have both depth $3$ and are connected in $\M^2(T)$ as depicted in Figure~\ref{fig:lemma 9.4}(A).
        \item Suppose that $\ell=0$, and that $x$ and $y$ have distinct depths. Without loss of generality, let us assume that $x$ has the larger depth. Then, as long as $x$ has depth $\geq 4$, $x$ and $y$ are connected in $\M^2(T)$ as depicted in Figure~\ref{fig:lemma 9.4}(B). If $x$ has depth $\le h-2$, an alternative connecting path is shown in Figure~\ref{fig:lemma 9.4}(C).
        \item Suppose that $\ell=0$, and that $x$ and $y$ have the same depth. Then, as long as $x$ has depth $\geq 2$, $x$ and $y$ are connected in $\M^2(T)$ as depicted in Figure~\ref{fig:lemma 9.4}(D). If $x$ has depth $\le h-4$, an alternative connecting path is shown in Figure~\ref{fig:lemma 9.4}(E).
    \end{enumerate}
    Hence $\M^2(T)$ has exactly two connected components.
    
    The argument above also shows that if $x$ and $y$ are in the same connected component with $ d_T(x,y)\ge 8$, then $d_{\M^2(T)}(x,y) = \lceil d_T(x,y)/4 \rceil$.
    Therefore, since ${\rm diam}(T) = 2h$, the maximum distance between connected vertices in $\M^2(T)$ is $\lceil h/2 \rceil$.
\end{proof}


To streamline the statement of our results, we partition the positive integers into segments $S_i$ whose endpoints are consecutive powers of $2$:
\begin{equation}
    \label{Si}
    S_i \coloneqq (2^{i-1}, 2^i], \qquad i = 0, 1, 2, \ldots,
\end{equation}
where we use the standard interval notation restricted to the integers.
In other words, we have
\begin{equation}
\label{Si log def}
    S_i = \{ s \in \Z^+ \mid \lceil \log_2 s \rceil = i\}.
\end{equation}
Note that $S_0 = \{1\}$ and $S_1 = \{2\}$.

\begin{lemma}
\label{lemma:combinations}
    Let $i\in \Z^+$ with $i \neq 2$.
    If $i$ is even $($resp., odd$)$, then every element of $S_i$ can be written as the sum of two $($not necessarily distinct$)$ elements in the union of $S_1$ $($resp., $S_0)$ and $S_{i-1}$.
\end{lemma}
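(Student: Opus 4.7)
The approach is a direct case analysis on $s \in S_i$, hinging on the contiguity of sum-sets of intervals of integers. First I would dispose of the base case $i=1$: the only element of $S_1$ is $2 = 1+1$, with both summands in $S_0$. For the remaining values $i \ge 3$, the key observation is that as $a,b$ range independently over the contiguous interval $S_{i-1} = (2^{i-2}, 2^{i-1}]$, the sum $a+b$ ranges contiguously over $[2^{i-1}+2,\,2^i]$, covering all of $S_i$ except its minimum element $2^{i-1}+1$.

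For any $s \in (2^{i-1}+1,\, 2^i]$, I would take $a = \lfloor s/2 \rfloor$ and $b = \lceil s/2 \rceil$; since $s/2 \in [2^{i-2}+1,\, 2^{i-1}]$, both $a$ and $b$ lie in $S_{i-1}$, and certainly both belong to $S_1 \cup S_{i-1}$ as well as to $S_0 \cup S_{i-1}$. This handles every element of $S_i$ strictly greater than $2^{i-1}+1$ regardless of parity.

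For the remaining element $s = 2^{i-1}+1$, the parity of $i$ dictates the decomposition. If $i$ is odd (with $i \ge 3$), I would write $s = 1 + 2^{i-1}$, using $1 \in S_0$ and $2^{i-1} \in S_{i-1}$. If $i$ is even with $i \ge 4$, I would write $s = 2 + (2^{i-1}-1)$, using $2 \in S_1$ and $2^{i-1}-1 \in S_{i-1}$; the membership $2^{i-1}-1 > 2^{i-2}$ needed here holds precisely when $i \ge 3$, so the range $i \ge 4$ is safely covered.

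There is no real obstacle in the proof, but it is instructive to see why the case $i = 2$ must be excluded: the only potential summands drawn from $S_1 \cup S_1 = \{2\}$ would yield $2 + 2 = 4$, which cannot produce $3 \in S_2$. This failure is resolved for every larger even $i$ by the availability of the element $2^{i-1}-1$ sitting strictly above $2^{i-2}$, which is what unlocks the parity-matched decomposition of the minimal element $2^{i-1}+1$.
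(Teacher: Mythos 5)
Your proof is correct and follows essentially the same route as the paper: observe that the sumset of $S_{i-1}$ covers all of $S_i$ except its minimum $2^{i-1}+1$, then handle that one element with $1+2^{i-1}$ or $2+(2^{i-1}-1)$ according to the parity of $i$. The explicit witnesses $\lfloor s/2\rfloor$ and $\lceil s/2\rceil$ and the remark on why $i=2$ fails are nice additions but do not change the argument.
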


\begin{proof}
    The statement is obviously true for $i=1$. Thus let $i \in \Z^+$ with $i\ge 3$.
    We have $S_i = [2^{i-1} + 1, \: 2^i]$ and $S_{i-1} = [2^{i-2}+1, \: 2^{i-1}]$.
    Since the sumset of $S_{i-1}$ is 
    \[
    \{s+t \mid s,t \in S_{i-1}\} = [2\min S_{i-1}, 2\max S_{i-1}] = [2^{i-1} + 2, 2^i],
    \]
    we have proved the lemma for all elements of $S_i$ except for $2^{i-1}+1$.
    Now, if $i$ is even, we complete the proof by writing $2^{i-1} + 1$ as the sum of $2 \in S_1$ and $2^{i-1}-1 \in S_{i-1}$. 
    Likewise, if $i$ is odd, we complete the proof by writing $2^{i-1} + 1$ as the sum of $1 \in S_0$ and $2^{i-1} \in S_{i-1}$.
\end{proof}

\begin{corollary}
    \label{cor:combinations}
    Let $\ell\in \Z^+$.
    Then every element of
    \[
    \bigcup_{\substack{0\,\le\, i\, \leq\, \ell \\ i\, \equiv\, \ell \: ({\rm mod} \: 2)}} \hspace{-2.5ex} S_i,
    \]
    except for $1$ and $3$ $($which appear only when $\ell$ is even$)$, can be written as the sum of two $($not necessarily distinct$)$ elements of 
    \[
    \bigcup_{\substack{0\,\le\, i\, \leq\, \ell-1 \\ i\, \equiv\, \ell-1 \: ({\rm mod} \: 2)}} \hspace{-2.5ex} S_i.
    \]
\end{corollary}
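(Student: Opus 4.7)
The plan is to deduce this corollary directly from Lemma \ref{lemma:combinations} by a case split on the parity of $\ell$. In either case, for each index $i$ in the source union with $i \neq 0, 1, 2$, I can invoke the lemma to express every element of $S_i$ as the sum of two elements from $S_1 \cup S_{i-1}$ (when $i$ is even) or $S_0 \cup S_{i-1}$ (when $i$ is odd). The point is that $S_{i-1}$ always has the opposite parity to $i$ and satisfies $i-1 \leq \ell-1$, so it lies in the target union; and the singleton $S_1 = \{2\}$ (resp.\ $S_0 = \{1\}$) lies in the target union precisely when $\ell$ is even (resp.\ odd). Thus the lemma covers all $i \geq 3$ automatically.

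The remaining verification is for the small indices $i \in \{0, 1, 2\}$. When $\ell$ is odd, only $i = 1$ appears among these, and the unique element $2 \in S_1$ is $1 + 1$, with $1 \in S_0$ lying in the target union. When $\ell$ is even, both $i = 0$ and $i = 2$ appear. Here the target union is $S_1 \cup S_3 \cup \cdots \cup S_{\ell-1}$, whose minimum element is $2$, so the minimum possible sum of two of its elements is $4$. This means $1 \in S_0$ and $3 \in S_2$ genuinely cannot be realized as such sums, explaining the exclusions in the statement; on the other hand, $4 = 2+2 \in S_2$ is realized via $2 \in S_1$, so nothing else in $S_0 \cup S_2$ needs excluding.

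Almost all of the content is already contained in Lemma \ref{lemma:combinations}, so the proof is essentially bookkeeping: verify that the endpoint $S_{\ell-1}$ is admissible, handle $i = 1$ when $\ell$ is odd, and check the two genuine exceptions $1$ and $3$ when $\ell$ is even. I do not anticipate a serious obstacle; the only subtle point is to confirm that $1$ and $3$ are \emph{not} expressible in the required form (as opposed to merely being inconvenient cases of the lemma), and this follows from the trivial lower bound on sums of elements of $S_1 \cup S_3 \cup \cdots$.
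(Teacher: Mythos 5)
Your proposal is correct and follows essentially the same route as the paper, which simply observes that the corollary is immediate from Lemma~\ref{lemma:combinations} once one notes that $S_{i-1}$ and the relevant singleton $S_0$ or $S_1$ lie in the target union, with the exceptions $1$ and $3$ arising exactly because $S_0$ and $S_2$ fall outside the lemma's scope. Your added check that $1$ and $3$ genuinely cannot be realized (via the lower bound $2+2=4$ on sums from $S_1\cup S_3\cup\cdots$) and that $4=2+2$ is covered is a slightly more careful bookkeeping of the same argument.
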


\begin{proof}
    This follows immediately from Lemma~\ref{lemma:combinations}. 
    Note that the sets $S_0=\{1\}$ and $S_2=\{3,4\}$ are excluded from Lemma~\ref{lemma:combinations} which leads to the exceptional cases.
\end{proof}

It turns out that the distances in $\M^2(T)$ are sufficient to completely describe the edges in every subsequent metamour graph of $T$.
The key to the proof of the following lemma is Corollary~\ref{cor:combinations} above, which will allow us always to split a path in $\M^2(T)$ into two subpaths of desired lengths.

\begin{lemma}
    \label{theorem: m-ary trees}
    Let $T = T(h,m)$ be the complete $m$-ary tree with height $h \geq 5$.
    Let $x,y \in V(T)$, with the exception of the case 
    \begin{equation}
        \label{exception}
        \operatorname{depth} x = \operatorname{depth} y = 1, \qquad m=2, \qquad h \in \{5,6\}.
    \end{equation}
    For $k \geq 2$, we have
   \[
    xy \in E(\M^k(T)) \Longleftrightarrow d_{\M^2(T)}(x,y) \in\quad \bigcup_{\mathclap{\substack{0\,\le\, i\, \leq\, k-2\\ i\, \equiv\, k \mod 2}}}\quad  S_i\,,
    \]
    where the sets $S_i$ are defined in~\eqref{Si}.
    In the case~\eqref{exception}, the edge $xy$ does not occur in $\M^k(T)$ for any $k \geq 2$.
\end{lemma}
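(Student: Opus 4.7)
The plan is to induct on $k \ge 2$, leveraging the identity $xy \in E(\M^{k+1}(T)) \iff d_{\M^k(T)}(x,y) = 2$. For brevity I will abbreviate the union in the statement as
\[
U_k \coloneqq \bigcup_{\substack{0 \le i \le k-2 \\ i \equiv k \mod 2}} S_i,
\]
so the claim becomes $xy \in E(\M^k(T)) \iff d_{\M^2(T)}(x,y) \in U_k$. Three easy observations about the $S_i$ drive the induction: (i) $\max U_k = 2^{k-2}$, (ii) $U_k \cap U_{k+1} = \emptyset$, and (iii) $U_k \cup U_{k+1} = \{1, 2, \ldots, 2^{k-1}\}$. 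The base case $k = 2$ is trivial, since $U_2 = S_0 = \{1\}$ and $xy \in E(\M^2(T)) \iff d_{\M^2(T)}(x,y) = 1$.

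For the inductive step (assuming the non-exceptional setting), note that $xy \in E(\M^{k+1}(T))$ iff there exists $z$ with $xz, yz \in E(\M^k(T))$ and $xy \notin E(\M^k(T))$. The forward direction is immediate: the inductive hypothesis places both $d_{\M^2(T)}(x,z)$ and $d_{\M^2(T)}(z,y)$ in $U_k$, so by the triangle inequality in $\M^2(T)$ together with (i) we get $d_{\M^2(T)}(x,y) \le 2^{k-1}$; combined with $d_{\M^2(T)}(x,y) \notin U_k$, observation (iii) forces $d_{\M^2(T)}(x,y) \in U_{k+1}$. For the backward direction, let $d \coloneqq d_{\M^2(T)}(x,y) \in U_{k+1}$; observation (ii) and the inductive hypothesis give $xy \notin E(\M^k(T))$, so only the vertex $z$ remains to be produced. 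I would apply Corollary~\ref{cor:combinations} with $\ell = k-1$ to decompose $d = d_1 + d_2$ with $d_1, d_2 \in U_k$ and then take $z$ to be the vertex at distance $d_1$ from $x$ along a geodesic in $\M^2(T)$ from $x$ to $y$; minimality forces $d_{\M^2(T)}(x,z) = d_1$ and $d_{\M^2(T)}(z,y) = d_2$, both in $U_k$. The corollary fails only when $d \in \{1,3\}$ and $k$ is odd, but in that case $k \ge 3$ gives $\{2\} = S_1 \subseteq U_k$, and Lemma~\ref{lemma:4 equals 4 4s} (for $d=1$) or Lemma~\ref{lemma:M3T} (for $d=3$) supplies a vertex $z$ with $d_{\M^2(T)}(x,z) = d_{\M^2(T)}(y,z) = 2$.

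The exceptional case \eqref{exception} demands separate treatment. Here one first verifies that $d_{\M^2(T)}(x,y) = 3$: no vertex of $T$ has $T$-distance $4$ from both $x$ and $y$ (so $d_{\M^2(T)}(x,y) \ge 3$), while Lemma~\ref{lemma:diam M2T} caps the diameter of the odd-depth component of $\M^2(T)$ by $\lceil h/2 \rceil = 3$. The main obstacle is to prove that $xy$ \emph{never} appears as an edge in any iterate, despite the fact that $3 \in U_k$ for every even $k \ge 4$; the inductive machinery collapses at exactly this point because Lemma~\ref{lemma:M3T} cannot supply the required common $\M^3(T)$-neighbor. I would close this gap by a direct case analysis of the two small trees $T(5,2)$ and $T(6,2)$, verifying by inspection (or by explicit computation of the first few metamour iterates) that the structural failure of $x$ and $y$ to share a common $\M^2(T)$-neighbor at distance $2$ propagates through every subsequent application of $\M$.
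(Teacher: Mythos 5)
Your proposal is correct and follows essentially the same route as the paper's proof: induction on $k$ with the splitting of $d_{\M^2(T)}(x,y)$ via Corollary~\ref{cor:combinations}, the triangle-inequality bound for the forward direction, Lemmas~\ref{lemma:4 equals 4 4s} and~\ref{lemma:M3T} for the residual cases $d\in\{1,3\}$, and direct inspection of $T(5,2)$ and $T(6,6)$-type exceptions. The only cosmetic differences are that you package the forward direction through the complementarity $U_k\cup U_{k+1}=\{1,\dots,2^{k-1}\}$ rather than the paper's explicit case split, and you use only $k=2$ as the base case, both of which are fine.
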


\begin{proof}
    We use induction on $k$.
    In the base cases $k=2$ and $k=3$, the theorem is true by definition, since $S_0 = \{1\}$ and $S_1 = \{2\}$.
    Note that in the exceptional case~\eqref{exception}, we have $d_{\M^2(T)}(x,y) = 3$ (see the proof of Lemma~\ref{lemma:M3T}), and indeed $3$ does not belong to $S_0$ or $S_1$.

    As our induction hypothesis, assume that the theorem holds up to some value of $k$; we now show that it also holds for $k+1$.
    We first prove the ``$\Longrightarrow$'' direction of the biconditional in the theorem.
    For ease of notation, in the rest of this proof we abbreviate
    \[
    d \coloneqq d_{\M^2(T)}(x,y).
    \]
    Let $xy \in E(\M^{k+1}(T))$. Then $xy \notin E(\M^{k}(T))$. Thus, by our induction hypothesis, $d\notin \bigcup_{0\,\le\, i\, \leq\, k-2,\, i\, \equiv\, k \mod 2}  S_i$. Hence, we must have either $d\in \bigcup_{0\,\le\, i\, \leq\, k-3,\, i\, \equiv\, k+1 \mod 2}  S_i$ or $d\in \bigcup_{i\ge k-1}  S_i$.
    In the latter case, we claim that actually $d \in S_{k-1}$.
    To see this, recall that $xy \in E(\M^{k+1}(T))$ implies $xz,yz \in E(\M^{k}(T))$ for some $z\in V(T)$. 
    We have $d_{\M^2(T)}(x,z), d_{\M^2(T)}(z,y) \leq \max S_{k-2} = 2^{k-2}$ by induction hypothesis. 
    Hence, $d\le d_{\M^2(T)}(x,z)+d_{\M^2(T)}(z,y)\le 2^{k-1}$ by triangle inequality, which is the largest element of $S_{k-1}$.
    Thus, $xy \in E(\M^{k+1}(T))$ implies $d\in \bigcup_{0\,\le\, i\, \leq\, k-1,\, i\, \equiv\, k+1 \mod 2}  S_i$, which proves the ``$\Longrightarrow$'' direction in the theorem.

    To prove the converse, suppose that $d\in \bigcup_{0\,\le\, i\, \leq\, k-1,\, i\, \equiv\, k+1 \mod 2}  S_i$.
    Then automatically $xy \not\in E(\M^{k}(T))$ by induction hypothesis, and
    we must show that $xy \in E(\M^{k+1}(T))$.
    
    Assume for now that $d \neq 1,3$.
    Then, by Corollary~\ref{cor:combinations}, where $\ell = k-1$, the number $d$ can be written as the sum of two (possibly equal) elements
    \begin{equation}
    \label{bc}
    b,c \in \bigcup_{\mathclap{\substack{i\, \leq\, k-2 \\ i\, \equiv\, k \: {\rm mod} \: 2}}} S_i.
    \end{equation}
    Since $b+c = d$, there exists some $z \in V(T)$ such that $d_{M^2(T)}(x,z) = b$ and $d_{M^2(T)}(z,y) = c$.
    Moreover, by~\eqref{bc} and the induction hypothesis, both $xz$ and $yz$ lie in $E(\M^{k}(T))$.
    Since $xy$ is not an edge in $\M^{k}(T)$, we conclude that $xy \in E(\M^{k+1}(T))$, thereby proving the ``$\Longleftarrow$'' direction in the theorem (as long as $d \neq 1, 3$).
    
    It remains to treat the cases where $d\in \{1,3\}$.
    In either case, note that $k$ is odd.
    By Lemmas~\ref{lemma:4 equals 4 4s} and~\ref{lemma:M3T} (for $d=1$ and $d=3$, respectively), there exists a vertex $z \in V(T)$ such that both $xz$ and $yz$ are in $E(\M^3(T))$, except in the exceptional case~\eqref{exception}.
    Hence, outside of~\eqref{exception}, since $k$ is odd, it follows that both $xz$ and $yz$ are in $E(\M^k(T))$.
    Therefore we have $xy \in E(\M^{k+1}(T))$, which completes the proof.
    In the case~\eqref{exception}, by Lemma~\ref{lemma:M3T} there is no such vertex $z$, and so the edge $xy$ does not occur in $\M^4(T)$, nor in any successive metamour graph as can easily be verified by direct inspection.
    \end{proof}

We now give the main result of this section, showing that $T(h,m)$ has metamour limit period $2$, with a pre-period of $\lceil\log_2 h\rceil$:

\begin{theorem}
\label{cor:tree limits}
    Let $T = T(h,m)$ be the complete $m$-ary tree with height $h \geq 5$.
    Then we have $\M^k(T) = \M^{k+2}(T)$ if and only if $k \geq \lceil \log_2 h \rceil $.
    In this range, the two graphs in the metamour limit set of $T$ are given by the edge criterion
    \[
    xy \in E(\M^k(T)) \Longleftrightarrow \left\lceil \log_2 d_{\M^2(T)}(x,y) \right\rceil \equiv k \operatorname{mod} 2
    \]
    for all $x,y \in V(T)$, with the exception of the case where $\operatorname{depth} x = \operatorname{depth} y = 1$ with $m=2$ and $h \in \{5,6\}$; in that case, $xy$ is not an edge in either metamour limit graph.
\end{theorem}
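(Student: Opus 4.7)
The plan is to deduce the theorem almost directly from Lemma~\ref{theorem: m-ary trees} (which completely characterizes $E(\M^k(T))$ in terms of $d_{\M^2(T)}(x,y)$) together with the diameter bound from Lemma~\ref{lemma:diam M2T}. The crucial observation is that, by~\eqref{Si log def}, membership $s \in S_i$ is equivalent to $\lceil \log_2 s \rceil = i$, so once the criterion of Lemma~\ref{theorem: m-ary trees} is rewritten in terms of $\lceil \log_2 d_{\M^2(T)}(x,y) \rceil$, the parity condition in the statement appears automatically.

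First I would compare $E(\M^k(T))$ and $E(\M^{k+2}(T))$ via Lemma~\ref{theorem: m-ary trees}. Since $k+2 \equiv k \pmod{2}$, the two sets are indexed by the same parity of $i$, and the latter merely extends the union by one additional set $S_k$. Hence
\[
E(\M^{k+2}(T)) \setminus E(\M^k(T)) = \{\, xy : d_{\M^2(T)}(x,y) \in S_k \,\},
\]
apart from the pairs covered by the exception in Lemma~\ref{theorem: m-ary trees}, which never contribute an edge in any $\M^k(T)$. Therefore $\M^k(T) = \M^{k+2}(T)$ if and only if no vertex pair of $T$ has $\M^2(T)$-distance in $S_k = (2^{k-1}, 2^k]$. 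Applying Lemma~\ref{lemma:diam M2T}, together with the fact extracted from its proof that $d_{\M^2(T)}(x,y) = \lceil d_T(x,y)/4 \rceil$ whenever $d_T(x,y) \ge 8$, I would verify that every integer in $[1, \lceil h/2 \rceil]$ is actually realized as $d_{\M^2(T)}(x,y)$ for some non-exceptional pair (choosing leaves of $T$ at a controlled distance). Consequently $S_k$ is disjoint from the realized distances exactly when $2^{k-1} \ge \lceil h/2 \rceil$, and the elementary identity $\lceil \log_2 \lceil h/2 \rceil \rceil = \lceil \log_2 h \rceil - 1$ (valid for $h \ge 2$) converts this to $k \ge \lceil \log_2 h \rceil$.

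For the description of the two limit graphs, I would then substitute $i = \lceil \log_2 d_{\M^2(T)}(x,y) \rceil$ into the criterion of Lemma~\ref{theorem: m-ary trees}. The bound $d_{\M^2(T)}(x,y) \le \lceil h/2 \rceil$ together with the identity above gives $i \le \lceil \log_2 h \rceil - 1 \le k-1$, and since the parity constraint $i \equiv k \pmod{2}$ then forces $i \le k-2$, the upper bound appearing in the lemma is automatically satisfied. Only the parity criterion remains, producing the stated characterization, while the exceptional pairs with $\operatorname{depth} x = \operatorname{depth} y = 1$, $m=2$, and $h \in \{5,6\}$ are handled directly by the corresponding clause of Lemma~\ref{theorem: m-ary trees}. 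The main technical point in the whole argument is the realization of every $\M^2(T)$-distance in $[1, \lceil h/2 \rceil]$ by a non-exceptional pair, which is what underwrites the ``only if'' half of the pre-period claim; everything else reduces to bookkeeping with the sets $S_i$ and the identity on $\lceil \log_2 \cdot \rceil$.
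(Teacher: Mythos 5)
Your proposal is correct and follows essentially the same route as the paper: both deduce the theorem from Lemma~\ref{theorem: m-ary trees} together with the diameter bound $\lceil h/2\rceil$ from Lemma~\ref{lemma:diam M2T} and the identity $\lceil \log_2 \lceil h/2\rceil\rceil = \lceil\log_2 h\rceil - 1$. Your explicit verification that every distance in $[1,\lceil h/2\rceil]$ is realized in $\M^2(T)$ is a slightly more careful treatment of the ``only if'' direction than the paper's appeal to the maximal distance alone, but it is the same argument in substance.
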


\begin{proof}
    It is clear from Lemma~\ref{theorem: m-ary trees} that if $k \geq 2$, then $xy \in E(\M^k(T))$ implies $xy \in E(\M^{k+2}(T))$.
    By Lemma~\ref{lemma:diam M2T}, the maximum distance between two connected vertices in $\M^2(T)$ is $\lceil h/2 \rceil$.
    By~\eqref{Si log def}, the smallest integer $i$ such that $\lceil h/2 \rceil \in S_i$ is given by
    \begin{equation}
        \label{first Si}
    \Big\lceil \log_2 \lceil h/2 \rceil \Big\rceil = \lceil \log_2 h \rceil - 1.
    \end{equation}
    Therefore, if $d_{M^2(T)}(x,y) = \lceil h/2 \rceil$, then by Lemma~\ref{theorem: m-ary trees}, the quantity in~\eqref{first Si} is the unique value of $k$ such that $xy \not\in E(\M^k(T))$ but $xy \in E(\M^{k+2}(T))$. 
    Hence we have $\M^{k}(T) = \M^{k+2}(T)$ if and only if $k$ is strictly greater than~\eqref{first Si}.
\end{proof}

Finally, for the sake of completeness, we describe the metamour graphs in the cases where $h < 5$.
The following behavior can be verified directly by drawing the first few metamour graphs, and so we leave the details to the reader:

\begin{theorem}\

\begin{enumerate}
    \item Let $T = T(1,m)$.
    Then $\M(T) = K_m \cup K_1$, and for all $k \geq 2$, we have $\M^k(T) = \overline{K}_{m+1}$.

    \item Let $T = T(2,m)$.
    Then $\M(T) = K_m \cup {\rm Wd}(m,m)$, where ${\rm Wd}(m,m)$ is the windmill graph obtained by taking $m$ copies of $K_m$ with a common vertex.
    We then have $\M^2(T) = (\overline{K}_m)^{\nabla m} \cup \overline{K}_{m+1}$, followed by $\M^3(T) = (K_m)^{\cup\, m} \cup \overline{K}_{m+1}$.
    $($The notation $G^{\nabla m}$ denotes the join of $m$ copies of $G$.$)$
    For all $k \geq 4$, we have $\M^k(T) = \overline{K}_{m^2 + m + 1}$.
    \item Let $T = T(3,m)$.
    We have $\M^5(T) = (K_m)^{\cup\, m^2} \cup \overline{K}_{m^2 + m + 1}$, and so $\M^k(T) = \overline{K}_{m^3+m^2+m+1}$ for all $k \geq 6$.
    \item Let $T = T(4,m)$.
    Then $T$ has metamour limit period 2, with the metamour limit set as given in Theorem~\ref{cor:tree limits}.
    If $m=2$, then this periodic behavior begins at $k = 4$.
    If $m \geq 3$, then this periodic behavior begins at $k = 6$.
    
\end{enumerate}
    
\end{theorem}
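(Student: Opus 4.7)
The plan is to verify each of the four items by direct computation of the first few metamour graphs, exploiting the transitivity of the automorphism group of $T(h,m)$ on ordered pairs of vertices with prescribed depth-of-$x$, depth-of-$y$, and LCA-depth parameters. For small $h$ this reduces each $\M^k(T)$ to a small orbit table that can be populated iteratively: a pair-orbit contributes an edge to $E(\M^k(T))$ if and only if its representatives have distance $2$ in $\M^{k-1}(T)$.

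For items (1)--(3) the bookkeeping terminates rapidly. For $T(1,m)$, $\M(T) = K_m \cup K_1$ has no distance-$2$ pair, so $\M^k(T) = \overline{K}_{m+1}$ for all $k \geq 2$. For $T(2,m)$, one reads off the handful of nontrivial pair-orbits to identify $\M(T)$ as the stated $K_m$-plus-windmill; iterating, same-group grandchildren lose their edge while different-group grandchildren gain one through the root (giving the multipartite join $(\overline{K}_m)^{\nabla m}$), the next step isolates them back into disjoint $K_m$'s, and the step after that kills all remaining distance-$2$ pairs, producing $\overline{K}_{m^2+m+1}$. The computation for (3) is identical in spirit, carried out on the larger orbit set of $T(3,m)$ for six steps until $\M^5(T)$ is a union of cliques $(K_m)^{\cup m^2}$ plus an edgeless piece, which has no distance-$2$ pairs.

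For item (4) the plan is to rerun the derivation of Theorem~\ref{cor:tree limits} but with $h=4$. Lemma~\ref{lemma:M2 is d4} survives because its proof is purely tree-theoretic, and the diameter bound of Lemma~\ref{lemma:diam M2T} still gives $\lceil h/2 \rceil = 2$, so the two limit graphs are described exactly by the parity criterion of Theorem~\ref{cor:tree limits} (applied to the pair-orbits that actually occur). Only the pre-period must be checked separately: one verifies orbit by orbit in $T(4,m)$ that every non-exceptional orbit is ``captured'' by $\M^4(T)$ when $m=2$, whereas for $m \geq 3$ the wider branching introduces orbits whose shortest certifying fully minimal $2$-walks have length three---analogous to the configurations in Table~\ref{table:proofM3T}---and these first appear only in $\M^6(T)$.

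The main obstacle is this final case analysis in (4): producing a complete list of pair-orbits for $T(4,m)$ and certifying the claimed delay for each, separately in the $m=2$ and $m \geq 3$ cases. This mirrors Lemmas~\ref{lemma:4 equals 4 4s} and~\ref{lemma:M3T} but with the extra constraint $h=4$ restricting the availability of the witness vertices $z$ constructed there; as a result, some configurations that resolve in $\M^3$ for $h \geq 5$ are pushed to $\M^5$ or $\M^6$ here. A small catalogue of orbit diagrams analogous to Figure~\ref{fig:lemma 4 equals 4 4s} should suffice to complete the verification.
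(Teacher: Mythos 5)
Your overall strategy---direct computation of the first few metamour graphs, organized by automorphism orbits of vertex pairs---is exactly what the paper does: its entire proof of this theorem is the remark that the behavior ``can be verified directly by drawing the first few metamour graphs,'' with details left to the reader. Your computations for items (1)--(3) are correct; in particular your four-stage description for $T(2,m)$ (same-parent grandchildren lose their edges, different-parent grandchildren join through the root, then the multipartite join collapses to disjoint cliques, then to the edgeless graph) matches the stated sequence.

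The one genuine gap is in item (4), where you propose to ``rerun the derivation of Theorem~\ref{cor:tree limits} with $h=4$'' and in particular to cite Lemma~\ref{lemma:diam M2T} for the diameter bound. That lemma---and Lemmas~\ref{lemma:4 equals 4 4s}, \ref{lemma:M3T}, and~\ref{theorem: m-ary trees}, on which Theorem~\ref{cor:tree limits} rests---is stated and proved only for $h\ge 5$, and its conclusion is actually false at $h=4$: in $\M^2(T(4,2))$ the odd-depth vertices split into \emph{two} components (each consisting of one depth-$1$ vertex together with the four depth-$3$ vertices of the \emph{other} root-subtree, since a depth-$1$ vertex has $T$-distance $2$, not $4$, to its own depth-$3$ descendants), so $\M^2(T)$ has three components rather than two. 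The maximum finite distance $2$ that you want does survive, but it cannot be obtained by citation, and the same is true of the edge criterion of Lemma~\ref{theorem: m-ary trees}: its proof runs through Lemmas~\ref{lemma:4 equals 4 4s} and~\ref{lemma:M3T}, whose case analyses explicitly require $h\ge 5$ to produce the witness $z$. Indeed, if the $h\ge 5$ machinery did carry over verbatim, Theorem~\ref{cor:tree limits} would predict that periodicity begins at $k=\lceil\log_2 4\rceil=2$, contradicting the pre-periods $4$ and $6$ asserted here. So item (4) must be handled the same way as items (1)--(3): a self-contained, orbit-by-orbit computation of $\M^k(T(4,m))$ for $k\le 6$, done separately for $m=2$ and $m\ge 3$, rather than an appeal to the Section~9 lemmas with their hypotheses relaxed.
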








\bibliographystyle{abbrvnat}
\bibliography{refs}

\end{document}